\documentclass[11pt]{article}
\newcommand{\Adresses}{{
\bigskip
\footnotesize

Octave Curmi, AIX MARSEILLE UNIVERSIT\'E, CNRS, CENTRALE MARSEILLE, I2M, FRANCE.

\emph{E-mail address: }octave.curmi@univ-amu.fr
}}
\usepackage{amsmath}
\usepackage{amsthm}
\usepackage[pagebackref]{hyperref}
  \hypersetup{
     colorlinks=true,
     linkcolor=blue,
    citecolor=blue
   }
\usepackage[normalem]{ulem}
\usepackage[refpage]{nomencl}
\makenomenclature
\usepackage[toc,page]{appendix}
\usepackage{tikz}
\usetikzlibrary{matrix}
\usepackage[scale=0.7]{geometry}
\usepackage{amssymb}
\usepackage{accents}
\usepackage[utf8]{inputenc}
\usepackage{float}
\usepackage{etoolbox}
\patchcmd{\thenomenclature}{\chapter*}{\chapter*}{}{}
\usepackage[english]{babel}
\usepackage{blindtext}
\usepackage[T1]{fontenc}
\usepackage{mathtools}
\usepackage[nottoc]{tocbibind}
\usepackage{enumitem}
\usepackage{faktor}
\usepackage{xfrac}
\usepackage{calrsfs}
\usepackage{lineno}  % pour numéroter les lignes
%\pagewiselinenumbers
\usepackage{makeidx}
\usepackage{xcolor}
\DeclareMathAlphabet{\pazocal}{OMS}{zplm}{m}{n}

\let\mathcal\pazocal
\let\pazocal\temp
\pagestyle{plain}
\author{Octave Curmi}
\usepackage{microtype}
\newtheorem*{theo}{Theorem}
\newtheorem*{propo}{Proposition}

\newtheorem{theorem}{Theorem}[section]
\newtheorem{lemma}[theorem]{Lemma}

\newtheorem{prop}[theorem]{Proposition}

\newtheorem{defn}[theorem]{Definition}
\newtheorem{rk}[theorem]{Remark}
\newtheorem{ex}[theorem]{Example}

\newcommand\Tau{\mathcal{T}} 
\newcommand\C{\mathbb{C}}
\newcommand\Z{\mathbb{Z}}
\newcommand\R{\mathbb{R}}
\newcommand\N{\mathbb{N}}

\newcommand\Fan{\mathcal{F}}
\newcommand\Sp{\mathbb{S}}

\newcommand*{\supstar}[1][E]{{\overset{\star}{{#1}}}}

%%variétés ambiantes
\newcommand\X{X}
\newcommand\Xt{\tilde{X}}
\newcommand\Li{S} %bord de X (link)
\newcommand\Xb{\supstar[X]}

%eventails
\newcommand\Fh{{\widehat{\Fan}}}
\newcommand\Fant{{\widetilde{\Fan}}}
\newcommand\Fb{{\supstar[\Fan]}}

%%surfaces & lieu sing
 %F vue dans \Xt
\newcommand\Sk{\pazocal{S}_k}
\newcommand\Sb{{\widetilde{\pazocal{S}}}}
\newcommand\Skt{{\widetilde{\pazocal{S}_k}}}

\newcommand\Sn{{\Skt^{N}}}

\newcommand\Vf{V(f)}
\newcommand\Vft{\widetilde{V(f)}}

\newcommand\Vg{V(g)}
\newcommand\Vgt{\widetilde{V(g)}}

\newcommand\V{V_{f\cdot g}} %% l'union des surfaces Vf et Vg

\newcommand\Supp{\operatorname*{Supp}}
\newcommand\Sing{\operatorname*{Sing}}

%fonctions et applications

\newcommand\rx{r_X}
\newcommand\rt{{\widetilde{r}}}
\newcommand\rb{\supstar[r]}

\newcommand\tr{r_{_{\pazocal{S}}}}
 %modification dans les fondements

 %fonction définissant 0 dans X

%%algèbre
%%\mathcal{O}_{X,x} pour l'anneau local

% Strings

%notation gcd

%% Recouvrements d'ouverts
 %les ouverts dans St

 %les ouverts dans St "dénormalisée"

%%diviseurs
\newcommand\Dt{\mathbb{D}}
\newcommand\Df{\mathbb{D}_{f}}

\newcommand\Dfex{{\mathbb{D}_{f,ex}}}

\newcommand\Do{\mathbb{D}_{0}}%diviseur de la modif de X

%Multiplicités de \ft et \gt données par m et n.

%%ensembles de courbes
\newcommand\Courbes{\pazocal{C}} %Courbes compactes
 %Courbes dans la normalisée

 %Courbes dans le modèle complexe.

 % La préimage d'une courbe C dans le modèle complexe

\newcommand\Courbest{\pazocal{C}_{tot}} %%"toutes les courbes" dans la normalisée
 %% courbes reliées à une courbe donnée par des points +

%%graphes

\newcommand*{\Gra}[1][E]{\Gamma(#1)} %Graphes duaux des configurations de surfaces.

\newcommand\Gct{\Gra[\Courbest]}

\newcommand\Gplomb{\Gamma(\partial F)}

\newcommand*{\Grad}[1][E]{\overset{\star}{\Gamma}\left(#1\right)} %Les mêmes, décorés.

\newcommand\Gcdt{\Grad[\Courbest]}

\newcommand\Gmult{\Gamma^\mu (E_{tot})}
\newcommand*{\Gdp}[2]{\Gamma_{#2}({#1})}%Graphe de plombage d'une configuration de surfaces dans une variété lisse.

 %% vertex associé à une courbe
 %% sous graphe contenant tous les sommets reliés à un sommet donné par des +

\newcommand\F{F}  %%fibre de milnor

\newcommand{\ba}[1][E]{\overline{#1}}

\newcommand\Ct{\ba[C]}
\newcommand\Cpt{\ba[C']}

\newcommand\plus{\oplus}
\newcommand\minus{\ominus} %% types d'arêtes / Points triples

 %% "sous variété stricte" (utilisé dans modèles algébriques locaux...

\newcommand*{\dual}[1][E]{#1^{\scalebox{.7}{\checkmark}}} %% dual d'un cône
\newcommand\NP{\operatorname*{LNP}}
\newcommand{\LNP}[1][E]{\NP(#1)}

\newcommand\Conv{\operatorname*{Conv}}

\newcommand\Card{\operatorname*{Card}}

\newcommand{\orb}[1][E]{O_{#1}}

%N_\sigma, M_\sigma, \dual[\sigma_{M_\sigma}], X_\sigma(N_\sigma).
 %la variété intrinsèque associée à un cône.

\newcommand\sigsing{\sigma_{sing}}

%notation vecteur
\newcommand*{\vect}[3]{\left(\begin{array}{c}
{#1}\\
{#2}\\
{#3}
\end{array}\right)}

\author{Octave Curmi}
\title{Boundary of the Milnor fiber of a Newton non degenerate surface singularity}
\date{}

\begin{document}

\maketitle

\begin{abstract}
We give an algorithm for the description of the boundary of the Milnor fiber of a Newton non degenerate surface singularity as a graph manifold. This is based on a previous work by the author describing a general method for the computation of the boundary of the Milnor fiber of any reduced non isolated singularity of complex surface.
\end{abstract}

\section{Introduction}

The study of Milnor fibers of complex-analytic functions, which began in the second half of the 20th century with the work \cite{Mil68}, gave rise to a rich interaction between algebra and topology. One of its interesting aspects is that it can be used to provide equations for sophisticated topological objects, such as exotic spheres, see \cite[p 46-48]{Bri00} and \cite{Hirz95}. See also \cite[Section 1]{Sea19}.

In his 1968 book, Milnor explored the topology of isolated singularities of hypersurfaces of $\C^n$, $V(f)=\{f=0\}$, introducing two equivalent fibrations, given respectively by the levels of $f/|f|$ on spheres centered at the critical points of $f$ and by the levels of $f$ in $\C^n$. 

The second of these two fibrations has been extended to more general contexts by Lê, see \cite{Le77}, and is known as the \textbf{Milnor-Lê fibration}. However, the singularities of the ambient space may lead to singular generic fibers. Hamm, in \cite{Ham71}, provided a setting in which the Milnor-Lê fibration is actually a \textbf{smoothing} of $V(f)$, that is, a way to put $V(f)$ in a \textbf{flat} family of analytic spaces with smooth generic fiber. Namely, if $(X,0)$ is a germ of equidimensional complex analytic space, and $f$ is any holomorphic function on $(X,0)$ such that $V(f)$ contains the singular locus $\Sing(X)$ of $X$, then the function $f$ provides a smoothing of the singularity $(V(f),0)$.

It is very hard to describe the whole Milnor fiber up to homeomorphisms, even for isolated singularities, and has been realized for only a few types of singularities. It is the case for the Kleinean singularities A, D, E, where the Milnor fiber is unique and diffeomorphic to a tubular neighbourhood of the exceptional divisor of the minimal resolution (see Brieskorn, \cite{Bri66.2}), as well as for the singularities of normal toric surfaces, with a description by surgery (see Lisca, \cite{Lis08} and Némethi \& Popescu-Pampu, \cite{NemPop10}), and for sandwich singularities (De Jong \& Van Straten, \cite{DejVan98}). As for nonisolated singularities, the only known case is that of hypersurface singularities of the form $\{f(x,y)+zg(x,y)=0\}$, see Sigur\dh sson, \cite{Sig16}. We refer also to the survey \cite{Gre19} for more about the topology of smoothings and deformations of singularities.

On the other hand, the study of the \textbf{boundary} $\partial F$ of the Milnor fiber of a smoothing has been a very active area of research in the last decades. For an isolated singularity, $\partial F$ is unique, and diffeomorphic to the link of the singularity. In today's words, Mumford proved in \cite{Mum61} that the link of any isolated singularity of complex surface is a \textbf{graph manifold} i.e. a manifold describable using a decorated graph whose vertices represent fibrations in $\Sp^1$ over compact surfaces. Waldhausen, in \cite{Wal67}, later introduced this vocabulary and began studying this class of manifolds in itself. Furthermore, thanks to the work of Grauert (\cite{Gra62}), one knows exactly which graph manifolds appear as links of isolated singularities of complex surfaces. However, this strong result is tempered by the fact that one still does not know, for example, which of these manifolds appear as links of singularities of \textbf{hypersurfaces} of $\C^3$.

Still, one would like to get an analogous result for boundaries of Milnor fibers associated to non isolated singularities. The first steps towards the comprehension of the topology of these manifolds were made by Randell \cite{Ran77}, then Siersma in \cite{Sie91}, \cite{Sie01}, who computed the homology of the boundary $\partial F$ of the Milnor fiber in certain cases, and characterized the cases in which $\partial F$ is a rational homology sphere. 

Concerning the general topology of this manifold, a series of results were aimed at proving that the boundary of the Milnor fibers associated to a non isolated singularity is, again, a graph manifold, see the works \cite{MicPic03}, \cite{MicPic04} and \cite{MicPic16} of Michel \& Pichon as well as their work \cite{MicPicWeb07} with Weber for the special case of Hirzebruch surface singularities, and \cite{MicPicWeb09} for the so-called suspensions ($f=g(x,y)+z^n$). In \cite{FerMen}, Fern\'andez de Bobadilla \& Menegon Neto prove the same result for a larger context, and in \cite{NemSzi12}, Némethi \& Szil\'ard give a constructive proof for the case of reduced holomorphic functions $f \colon (\mathbb{C}^3, 0) \to (\mathbb{C}, 0)$. It is this last proof that we extend here and in \cite{cur19.2} to study the boundary of the Milnor fiber of a Newton non degenerate function $f\colon (X_\sigma,0)\to (\C,0)$, where $X_\sigma$ is the $3$-dimensional normal toric variety associated to a cone $\sigma$, and $\Vf\supset \Sing(X)$. In this setting, we adapt the general procedure of \cite{cur19.2} to produce an algorithm for the description of the boundary of the Milnor fiber of $f$, extending the work of Oka in \cite{Oka87} for non degenerate isolated singularities in $\C^3$.

We prove the following theorem, which appeared for the first time in our thesis \cite{cur19}:
\begin{theo}\label{th torique intro} Let $(X,0)$ be the germ at its vertex of a $3$-dimensional toric variety defined by a $3$-dimensional cone in a weight lattice. Let $f \colon (X,0) \to (\mathbb{C}, 0)$ be a Newton-nondegenerate function whose zero locus contains $\Sing(X)$. Then the boundary $\partial F$ of its Milnor fiber is a graph manifold determined by the local Newton polyhedron of $f$. 
\end{theo}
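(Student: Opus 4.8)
The plan is to reduce the toric case to the already-established case of hypersurfaces in $\C^3$ via an embedded resolution adapted to the Newton polyhedron, and then run the general machinery of \cite{cur19.2}. More precisely, since $(X,0) = (X_\sigma, 0)$ is a normal affine toric variety of dimension $3$, a regular subdivision $\Sigma$ of the cone $\sigma$ yields a toric resolution $\pi \colon \Xt \to X$, and by the Newton-nondegeneracy hypothesis one may choose $\Sigma$ so that $\pi$ also resolves $\Vf$ (this is the toric analogue of Varchenko's/Oka's construction: the relevant fan refinement is read off from the local Newton polyhedron $\LNP(f)$). The strict transform $\Vft$ is then smooth, meets the exceptional divisor $\Ex$ of $\pi$ transversally, and $\Ex \cup \Vft$ is a simple normal crossings divisor in the smooth $3$-fold $\Xt$; all the combinatorial data (irreducible components, their genera, self-intersection-type decorations, intersection graph) are determined by $\Sigma$ and $\LNP(f)$.

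**Next I would** invoke the general result of \cite{cur19.2} (the author's prior work, which we are allowed to assume): given a reduced, possibly nonisolated singularity $(\Vf,0)$ inside an equidimensional $(X,0)$ with $\Vf \supset \Sing(X)$, together with a good resolution on which the total transform is SNC, there is an explicit plumbing description of $\partial F$ as a graph manifold, where the plumbing graph is built from the dual graph of the resolution decorated by the multiplicities of $f$ along the exceptional components and the vanishing/non-vanishing of $f$ there. The point here is that the toric resolution produced in the first step is exactly of the type required to feed into that procedure. So the argument is: (i) build the Newton-adapted toric resolution; (ii) check it satisfies the hypotheses of the main construction of \cite{cur19.2}; (iii) extract from $\LNP(f)$ and $\Sigma$ the decorated graph; (iv) conclude that $\partial F$ is the graph manifold associated to this graph, hence determined by $\LNP(f)$.

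**The main obstacle**, I expect, is step (i)–(ii): producing a single fan refinement $\Sigma$ that simultaneously (a) makes $X_\sigma$ smooth, (b) makes $\Vft$ smooth and transverse to every stratum of $\Ex$, and (c) is compatible with the technical normalization/combinatorial hypotheses imposed in \cite{cur19.2} (e.g. that $\Sing(X)$ sits inside $\Vf$ in a controlled way, and that the strict transform meets $\Ex$ only along the "right" faces). Newton-nondegeneracy is precisely the condition that lets one do this by a purely combinatorial subdivision — one refines $\sigma$ using the normal fan of the polyhedron $\LNP(f)$ and then further subdivides to achieve regularity — but proving that the resulting resolution is \emph{good} in the required sense, and that no hidden singularities of $\Vft$ or bad tangencies survive, is where the real work lies. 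A secondary difficulty is bookkeeping: translating the toric/polyhedral data into the plumbing decorations of \cite{cur19.2} and verifying that the output depends only on $\LNP(f)$ and not on the auxiliary choice of regular subdivision (different regular subdivisions must give plumbing-equivalent graphs, i.e. related by blow-up/blow-down moves on the plumbing graph). Once these points are settled, the conclusion that $\partial F$ is a graph manifold determined by $\LNP(f)$ follows formally from the general theorem.
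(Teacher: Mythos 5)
Your overall strategy (refine $\sigma$ according to the Newton polyhedron, then feed the result into the general machinery of \cite{cur19.2}) has the right shape, but you have mischaracterized what that machinery takes as input, and this leaves a genuine gap. For a \emph{non-isolated} singularity, $\partial F$ is not read off from an SNC resolution of the pair $(X,\Vf)$ alone. The construction of \cite{cur19.2} requires an auxiliary \emph{companion} function $g$ (Definition \ref{companion}): one replaces $\partial F$ by the link of the real $4$-dimensional variety $\Sk=\{f=|g|^k\}$, builds a modification of $X$ \emph{adapted to the pair} $(f,g)$ (Definition \ref{def adapted modif}), decorates the dual graph $\Gcdt$ of the resulting curve configuration with the multiplicities of \emph{both} $f$ and $g$, and then obtains the plumbing graph through a covering/Hirzebruch--Jung procedure coming from normalizing and resolving the real $4$-fold $\Skt$. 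Your proposal never introduces $g$, so it cannot produce the decorations $(m_1;m_2,n_2)$, the $\plus/\minus$ edge signs, or the strings $Str^\plus$, $Str^\minus$; the ``explicit plumbing description'' you invoke does not exist in the form you state it. In the toric setting this is where most of the paper's actual work lies: choosing $g$ via the universal companion polyhedron $P(G)$ (so that the final answer is expressed in terms of $\sigma$ and $\LNP[f]$ only, not of $g$), refining the fan $\Fan_{f\cdot g}$ by cutting cones, and reading off genera, intersection numbers and signs via Oka's results and the Bernstein--Koushnirenko--Khovanskii theorem (Proposition \ref{prop orbits 1 and 2}, Lemma \ref{BKK dim 2}).

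A second, independent gap: even granting the general theorem, its output in \cite{cur19.2} is \emph{not} determined by the decorated graph alone --- one needs extra data (the so-called switches, and a covering data whose ambiguity is only resolved on trees). A key point of the present paper (Lemma \ref{lemma sufficiency}) is that in the Newton-nondegenerate toric case these extra data are forced: every non-rational curve carries first multiplicity $1$, and no cycle of $\Gcdt$ consists solely of exceptional curves. Without this verification you can conclude at best that $\partial F$ is \emph{some} graph manifold, not that it is \emph{determined} by $\LNP[f]$. (The independence issue you do raise --- different regular subdivisions giving plumbing-equivalent graphs --- is the easy part, absorbed by Neumann's normal form; the dependence on $g$ and on the non-canonical cutting cones is what actually requires the above.)
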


The purely combinatorial nature of the description we make of the manifold $\partial F$ opens the way for the computation of a great number of examples through a future implementation in a computer program. But it also calls for more theoretic work, such as for example extending  what is done in \cite{BraNem}, where Braun \& Némethi do the opposite work, retrieving a possible Newton polyhedron of a function $f : \mathbb{C}^3 \to \mathbb{C}$ having a given graph manifold as boundary of its Milnor fiber, under the hypothesis that this manifold is a rational homology sphere. Another use of this method would be to answer the widely open question of which manifolds can appear as boundaries of Milnor fibers of non degenerate surface singularities. In this direction, our algorithm already provides the following obstruction (see Proposition \ref{prop planar}):
\begin{propo}
The normal form of the plumbing graph of the Milnor fiber of a Newton non degenerate singularity of complex surface is planar.
\end{propo}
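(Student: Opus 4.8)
The plan is to split the claim into two independent parts. First, I would show that the \emph{raw} plumbing graph $\Gamma$ produced by the algorithm of the preceding sections is planar. Second, I would show that the reduction of $\Gamma$ to Neumann's normal form preserves planarity. Combining the two gives the statement, since the normal form of a plumbing graph depends only on the underlying graph manifold, so that ``the normal form is planar'' is a well-posed condition depending only on $\partial F$.

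For the first part, recall that the algorithm assembles $\Gamma$ out of \emph{elementary plumbing pieces} indexed by the cells of the Newton boundary of $f$ (the union of the compact faces of the local Newton polyhedron), together with the pieces coming from the components of $\Sing(X)$ and the arrowheads recording the link of the Milnor fibration. The Newton boundary is a two-dimensional polyhedral complex which, being the graph of a convex piecewise-linear function, is PL-homeomorphic to a planar polyhedral complex (a subdivided disk). The elementary piece attached to a $2$-cell is a Seifert piece over the corresponding toric surface, hence a star-shaped graph, hence a tree; the piece attached to a $1$-cell is a Hirzebruch--Jung string; the pieces attached near the singular strata are either strings or resolution graphs of plane curve germs, hence trees; in particular each elementary piece is planar. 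Moreover the gluing of the pieces mirrors the incidence structure of the complex: a $1$-cell string is inserted between the (at most two) $2$-cell pieces whose faces contain it, and the pieces incident to a given $0$-cell are organized cyclically. I would then exhibit an explicit planar embedding of $\Gamma$ by drawing, for each cell of the (planar) Newton boundary, its elementary piece inside the corresponding region, running the $1$-cell strings across the shared edges, and routing the singular-stratum pieces and arrowheads into the complementary faces; one checks by induction on the number of cells that no crossing is created.

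The step I expect to be the main obstacle is the verification that this can be carried out with a globally consistent rotation system, i.e. that at each $0$-cell the cyclic order in which the various pieces — $2$-cell pieces, $1$-cell strings, the auxiliary vertices carrying the boundary multiplicities, and arrowheads — attach around the corresponding node of $\Gamma$ is realizable in the plane. This amounts to a combinatorial fact about the Newton polyhedron: since the Newton boundary is a surface with boundary, the link of any of its vertices is a circle or an arc, so the incident $1$- and $2$-cells carry a natural cyclic (respectively linear) order, and a star-shaped vertex piece accommodates any cyclic order of its arms; the delicate point is to check that neither the signs on the plumbing edges nor the surgeries the algorithm performs near $\Sing(X)$ can force an attaching pattern containing a $K_5$ or a $K_{3,3}$. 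I expect this to reduce to a finite inspection of the possible local types of cells and of singular strata, each of which yields a planar local picture.

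For the second part, I would run through the moves used in the reduction to Neumann's normal form and note that each one is a bounded local modification that preserves planarity: blowing down a $(\mp 1)$-weighted genus-$0$ vertex with at most two incident edges is a leaf deletion or an edge contraction; the $0$-chain absorption of a genus-$0$, weight-$0$, two-valent vertex is an edge contraction; the oriented handle absorption, the $\oplus\ominus$-cancellation, the deletion of redundant components, and the edge-sign changes are all supported on subgraphs of bounded valence and visibly send planar graphs to planar graphs. Hence planarity is preserved throughout the reduction, and therefore the normal form of $\Gamma$ is planar, which is the assertion of the proposition.
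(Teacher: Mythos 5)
Your overall strategy coincides with the paper's, which is stated very tersely: the graph produced by the algorithm is planar ``as a consequence of our method of computation'', and, while general plumbing calculus does not preserve planarity, the reduction to Neumann's normal form does. Your second part (checking the normalization moves one by one) is therefore fine and is exactly the point the paper makes explicitly.

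The gap is in your first part, and it is not located where you expect it to be. The intermediate graph $\Gcdt$ is indeed planar for essentially the reason you give: it is read off the planar cross-section of the fan $\Fant$, so its vertices and edges follow the incidence structure of a planar subdivision (this is why the figures of the paper can be drawn ``reflecting the disposition of the fan''). But the plumbing graph $\Gplomb$ is not $\Gcdt$: it is obtained from it through the graph $\Gmult$ of Subsection \ref{subs mult graph}, in which every vertex $v_C$ is replaced by $n_C=\gcd(m_1,\dots,m_{s+t})$ copies and every edge by $d=\gcd(m_1,m_2,m_3)$ Hirzebruch--Jung strings distributed uniformly among the copies of its endpoints. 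Your description of the graph as one elementary piece per cell of the Newton boundary, glued along incidences, omits this cyclic-covering step entirely, and that is precisely where planarity is genuinely at stake: a finite cover of a planar multigraph need not be planar (for instance $K_{3,3}$ covers the theta graph). By contrast, the difficulty you do anticipate --- realizing a consistent rotation system at the $0$-cells and excluding $K_5$ or $K_{3,3}$ in the local attaching patterns --- is comparatively harmless, since a star-shaped piece accommodates any cyclic order of its arms. The ingredient the paper supplies to control the covering step is Lemma \ref{lemma sufficiency}: no cycle of $\Gcdt$ consists only of exceptional curves, so every cycle passes through a strict-transform vertex, whose first multiplicity is $1$, so that $n_C=1$ and that vertex is not duplicated; together with the uniform distribution of the multiplied edges among the copies, this is what pins down the cover and keeps it planar. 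A complete proof must engage with this step; as written, yours does not.
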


The article is organized in the following way:

\begin{itemize}
\item In Section \ref{section reminder}, we recall the main result of \cite{cur19.2}, on which we base the algorithm described in this work,
\item in Section \ref{section toric}, we give the definitions and results of toric geometry which are involved in our construction,
\item Section \ref{section construction graphe} is dedicated to the construction of the intermediate graph $\Gcdt$, which is one of the main ingredients for the description of the manifold $\partial F$ in the general case,
\item in Section \ref{section sufficiency}, we show why the data of $\Gcdt$ is sufficient to proceed to the rest of the computation.
\end{itemize}

I would like to thank Patrick-Popescu Pampu for his support and for sharing his vision of toric geometry, as well as Anne Pichon for her careful reading and remarks.

I thank also the Institut de Mathématiques de Marseille and the project Lipschitz geometry of singularities (LISA) of the Agence Nationale de la Recherche (project ANR-17-CE40-0023) for their financial support which allowed me to attend many enriching events.

\section{Milnor fiber boundary of a non-isolated singularity}\label{section reminder}
We recall here some of the main material of \cite{cur19.2}, concerning the structure of the boundary of the Milnor fiber of the smoothing of an \emph{arbitrary} reduced complex surface singularity. One can consult this article and the references therein for more details.

\subsection{Milnor fibration on a singular variety}
In the sequel, a {\em variety} will mean a reduced and equidimensional complex analytic space. 

Let $(X,0)$ be a germ of complex variety of dimension $3$, and let $f \colon (X,0)\rightarrow (\mathbb{C},0)$ be a germ of holomorphic function on $(\X,0)$ assumed not to divide zero, that is, not to vanish on any irreducible component of $(X,0)$. The function $f$ defines a germ $(\Vf,0)$ of hypersurface on $(\X,0)$, where $\Vf:=\{x\in \X, f(x)=0\}$.
% Denote by $\Sigma \Vf$ the singular locus of $\Vf$, and by $\Sigma \X$ the one of $X$. 

In the sequel, if $(X,0)$ is a germ, $X$ will denote a representative of this germ.

\begin{theorem}
\emph{(H. Hamm, \cite[Satz 1.6]{Ham71}, Lê \cite[Theorem 1.1]{Le77})}\label{thm fibration}

\noindent Given a real analytic function $\rho \colon (X,0)\to (\R_+,0)$ such that $0$ is isolated in $\rho^{-1}(0)$, and $\varepsilon>0$, denote $\X_{\varepsilon}:=\X\cap \rho^{-1}([0,\varepsilon])$, and $\Li_\varepsilon:=\X\cap \{\rho=\varepsilon\}$. Let $f\colon (X,0)\rightarrow (\C,0)$ be a germ of holomorphic function, such that $\X\setminus \Vf$ is smooth. Then there exists $\varepsilon_0>0$, such that $\forall ~ \, 0<\varepsilon \leqslant \varepsilon_0, \exists ~ \delta_\varepsilon > 0$ such that $\forall ~  \,0<\delta\leqslant \delta_\varepsilon$, the map $$f\colon \left\{|f|=\delta\right\} \cap \X_\varepsilon\rightarrow D_\delta$$ is a smooth fibration, where $D_\delta$ denotes the closed disc of radius $\delta$ around $0$ in $\C$.
\end{theorem}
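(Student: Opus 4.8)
The plan is to deduce the statement from Ehresmann's fibration theorem for manifolds with boundary, by exhibiting $f$ as a proper submersion onto the circle $\partial D_\delta$ on the set $M:=\{|f|=\delta\}\cap\X_\varepsilon$, and checking that it stays a submersion when restricted to the corner $\{|f|=\delta\}\cap\Li_\varepsilon$. The data I would set up first is: a representative $\X$ small enough that $\rho^{-1}(0)=\{0\}$, so that $\X_\varepsilon$ is compact for $\varepsilon$ small; a complex analytic Whitney stratification of $\X$ adapted to $\Vf$ and satisfying Thom's $a_f$-condition along $\Vf$ (such a stratification exists by Hironaka's theorem, and, $\X\setminus\Vf$ being smooth, it is the unique open stratum); and the curve selection lemma, used repeatedly in the subanalytic category.

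Two preliminary reductions follow from the curve selection lemma. First, after a further shrinking of $\X$, the function $f$ is a submersion on the open stratum $\X\setminus\Vf$: the zero set of the holomorphic $1$-form $d(f|_{\X\setminus\Vf})$ is analytic, and a real analytic arc of its points issuing from $0$ would force $f$ to be constant equal to $0$ along that arc, hence to lie inside $\Vf$, which is absurd; so this zero set cannot accumulate at $0$, and we may take it empty. Second, for each stratum $S$, an arc of critical points of $\rho|_S$ issuing from $0$ would force $\rho$ to be constant, hence identically $0$, along it, hence to lie in $\rho^{-1}(0)=\{0\}$, absurd; with finitely many strata this produces $\varepsilon_0>0$ such that for every $0<\varepsilon\le\varepsilon_0$ the hypersurface $\{\rho=\varepsilon\}$ is transverse in $\X$ to all strata, so that $\X_\varepsilon$ is a compact Whitney stratified space with boundary $\Li_\varepsilon$.

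The core of the argument, and the step I expect to be the main obstacle, is the uniform control of the Milnor tube near the singular level $\Vf$, for a \emph{fixed} $\varepsilon\le\varepsilon_0$. Let $\Pi\subset\X\setminus\Vf$ be the locus of points where $d\rho$, $d\operatorname{Re}f$ and $d\operatorname{Im}f$ are linearly dependent on the tangent space of $\X\setminus\Vf$. I would prove that there is $\delta_\varepsilon>0$ with $\Pi\cap\{\rho=\varepsilon\}\cap\{0<|f|\le\delta_\varepsilon\}=\emptyset$. If not, choose $x_n\in\Pi$ with $\rho(x_n)=\varepsilon$ and $0<|f(x_n)|\to0$, converging to some $x_\infty\in\Vf\cap\{\rho=\varepsilon\}$ lying in a stratum $S'$. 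Restricting the linear dependence relation at $x_n$ to the kernel of $d(\operatorname{Re}f,\operatorname{Im}f)$ inside $T_{x_n}(\X\setminus\Vf)$ kills the two $f$-terms and leaves a scalar multiple of $d\rho$; if that scalar is nonzero for infinitely many $n$, then $d\rho$ vanishes on that kernel, whose limit in the appropriate Grassmannian (along a subsequence) contains $T_{x_\infty}S'$ by Thom's $a_f$-condition, so $d\rho$ vanishes on $T_{x_\infty}S'$, contradicting the transversality of $\{\rho=\varepsilon\}$ to $S'$; and if the scalar is zero for infinitely many $n$, then, $f$ being a submersion on $\X\setminus\Vf$, the whole relation is trivial, contradicting $x_n\in\Pi$. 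This is precisely the place where a Thom stratification is indispensable; everything else reduces to generic transversality.

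Finally I would assemble the pieces. For $0<\delta\le\delta_\varepsilon$, the set $M=\{|f|=\delta\}\cap\X_\varepsilon$ lies in the smooth manifold $\X\setminus\Vf$, since $|f|=\delta\ne0$ there. As $f$ is a submersion on $\X\setminus\Vf$, the set $\{|f|=\delta\}\cap(\X\setminus\Vf)=f^{-1}(\partial D_\delta)\cap(\X\setminus\Vf)$ is a smooth hypersurface on which $f$ is a submersion onto $\partial D_\delta$; and the emptiness of $\Pi$ on the relevant region shows both that $\{\rho=\varepsilon\}$ meets this hypersurface transversally and that $f$ stays a submersion onto $\partial D_\delta$ after being restricted to $\{\rho=\varepsilon\}$. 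Hence $M$ is a compact smooth manifold with boundary $M\cap\Li_\varepsilon$, and $f|_M$ is a proper submersion onto $\partial D_\delta$ whose restriction to $\partial M$ is again a submersion; Ehresmann's fibration theorem for manifolds with boundary then gives that $f|_M\colon M\to\partial D_\delta$ is a locally trivial smooth fibration, which is the asserted statement. I would be careful throughout with the order of the quantifiers --- a uniform $\varepsilon_0$ first, then $\delta_\varepsilon$ depending on $\varepsilon$ --- and with the subanalyticity needed to invoke the curve selection lemma.
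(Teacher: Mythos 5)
The paper does not prove this statement: it is quoted as a known theorem of Hamm and Lê, so there is no internal proof to compare against. Your argument is, in substance, Lê's original proof of the Milnor--Lê fibration: shrink so that $f$ is a submersion off $\Vf$ and $\{\rho=\varepsilon\}$ is transverse to the strata (both via the curve selection lemma), then use a Thom $a_f$-stratification (Hironaka) to rule out, by the limiting argument on tangent spaces of the fibres of $f$, any dependence of $d\rho$, $d\operatorname{Re}f$, $d\operatorname{Im}f$ on $\{\rho=\varepsilon\}\cap\{0<|f|\le\delta_\varepsilon\}$, and finish with Ehresmann for manifolds with boundary. I find the argument correct, including the quantifier order and the dichotomy on the coefficient of $d\rho$ in the dependence relation. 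Two cosmetic points: the critical locus of $f|_{\X\setminus\Vf}$ is analytic only in $\X\setminus\Vf$, so the curve selection lemma must indeed be applied to its (subanalytic) closure, as you note; and the fibration is of course onto the circle $\partial D_\delta$ rather than the closed disc $D_\delta$ appearing in the displayed formula of the statement, which is how you correctly read it.
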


\begin{defn}
The map defined in the previous Theorem is called the \textbf{Milnor fibration} associated to $f$, and its fiber is called the \textbf{Milnor fiber} of the germ of function $f$.
\end{defn}

\begin{rk}
The Milnor-Lê fibration is also sometimes referred to as the Milnor fibration.

Using transversality arguments, one may show that the diffeomorphism type of the Milnor fiber does not depend on the chosen representative, so we speak about \textbf{the Milnor fiber of the germ of function $f\in \mathcal{O}_{\X,0}$}
\end{rk}

\subsection{Graph manifolds}
We explain here the definition of graph manifolds (also called plumbed manifolds) that we refer to. For details, one can consult the foundational articles \cite{Mum61} and \cite{Wal67}, as well as the article \cite{Neu81} for a description of the so-called plumbing calculus and other topological considerations on graph manifolds.

Recall that the Euler number of a fibration in $\Sp^1$ over a surface is an integer $e$ that characterizes the total space of the fibration up to homeomorphism. See \cite[Example 4.6.5]{GomSti} for more details.

\begin{defn}
An \textbf{orientable plumbing graph} is a graph $\Gamma$ with decorations of the following type:
\begin{itemize}
\item Each edge is decorated by a $\plus$ or $\minus$ symbol.
\item Each vertex $v$ is decorated by an Euler number $e_v\in \Z$ and a genus $[g_v]$, $g_v\in \N$. 
\end{itemize}
\end{defn}

\begin{rk}
When representing plumbing graphs, we may omit the decorations $\plus$ and $[0]$.
\end{rk}

\begin{defn}{\bf (Graph manifold)}

If $\Gamma$ is an orientable plumbing graph, define the oriented \textbf{graph manifold} $M_\Gamma$ associated to $\Gamma$ in the following way: for each vertex $v$ of $\Gamma$ decorated by $([g_v],e_v)$, let $M_v$ be the $\Sp^1$-fibration with Euler number $e_v$ over the closed smooth surface $B_v$ with genus $g_v$. Choose an orientation of the base and of the fibers so that, taken together, they give the orientation of $M_v$.

Now, let $\lambda_v$ be the number of times the vertex $v$ appears as endpoint of an edge. Remove from $B_v$ disjoint open disks $(D_i)_{1\leqslant i\leqslant\lambda_v}$, consequently removing as many open solid tori from $M_v$. Each $\partial D_i$ is oriented as boundary component of $B_v \setminus \bigsqcup D_i$. Denote by $M_v^b$ the resulting circle bundle with boundary. The boundary $\partial M_v^b$ is a disjoint union of tori, denote $\partial M_v^b=:\bigsqcup T_i$.

For every edge between the vertices $v$ and $v'$, glue the manifolds $M_v^b$ and $M_{v'}^b$ in the following way: pick boundary components $T=\partial D\times \Sp^1$ in $M_v^b$, $T'=\partial D' \times \Sp^1$ in $M_{v'}^b$, and glue $T$ and $T'$ according to the matrix $\epsilon\begin{bmatrix}
0 & 1\\
1 & 0 
\end{bmatrix}$, $\epsilon$ being the sign on the edge.

Finally, we use the convention $$M_{\Gamma_1 \bigsqcup \Gamma_2}=M_{\Gamma_1} \# M_{\Gamma_1}.$$
\end{defn}

In \cite{Neu81}, the author describes the so-called \textbf{plumbing calculus}, which consists in a list of transformations one can apply on a given plumbing graph without changing the associated graph manifold. Although there are infinitely many different graphs encoding the same graph manifold, Neumann provides the definition of the \textbf{normal form} of a given plumbing graph, as well as an algorithm to obtain it

\subsection{Main aspects of the general result}
We recall here the main steps of the general proof, and show how they relate to the different steps of the computation shown in Sections \ref{section construction graphe} and \ref{section sufficiency} in the setting of toric geometry. We start by introducing a second function $g\colon (X,0)\to (\C,0)$ generic relatively to $f$ in the following sense:

\begin{defn} \label{companion} 
We say that the holomorphic function $g\colon (X,0)\rightarrow (\C,0)$ is a \textbf{companion of $f$} if it satisfies the following conditions:
\begin{enumerate}
\item The surface $\Vg$ is smooth outside the origin. \label{g smooth}
\item $\Vg\cap \Sigma \Vf = \{0\}$. \label{intersection non sauvage}
\item The surfaces $\Vg$ and $\Vf$ intersect transversally outside the origin.\label{intersection transverse}
\item \label{discr} The discriminant locus $\Delta_{(f,g)}$ is an analytic curve.
\end{enumerate}
\end{defn}

In \cite[Lemma 4.4]{cur19.2}, we prove that the restriction to $X$ of a generic linear form on $(\C^N,0)\supset (X,0)$ is a companion of $f$. Using this companion, one defines, for $k\in 2\N$, the germ $$(\Sk,0):=(\{f=|g|^k\},0).$$

This is a germ of $4$-dimensional real analytic variety, and 

\begin{prop}(\cite[Proposition 11.3.3 for $X$ smooth]{NemSzi12}, \cite[Proposition 4.13]{cur19.2})

If $k$ is large enough, $(\Sk,0)$ is an isolated singularity, and its link $\partial \Sk$ is diffeomorphic to the boundary $\partial F$ of the Milnor fiber of $f$.
\end{prop}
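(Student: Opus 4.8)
The plan is to reduce the statement to the smooth-ambient case treated by Némethi--Szilárd, by pulling back the situation along a resolution of $(X,0)$ and keeping track of what the Newton-nondegeneracy and the companion hypothesis buy us. First I would recall that, since $g$ is a companion of $f$, the hypersurface $\Vg$ is smooth away from the origin, meets $\Sigma\Vf$ only at $0$, and is transverse to $\Vf$ off the origin; these are exactly the conditions under which the family $\{f=|g|^k\}$ behaves well. The key local model computation is this: away from the origin, along the strata where $f$ and $g$ are both smooth and mutually transverse, the equation $f=|g|^k$ cuts out a smooth $4$-manifold, because in suitable local coordinates $(f,g,\ldots)$ the map $(z_1,z_2)\mapsto z_1-|z_2|^k$ has surjective real differential wherever $z_1\neq 0$ or $z_2\neq 0$, and the only bad locus is $\{f=g=0\}$, which by condition \ref{intersection transverse} and condition \ref{g smooth} is, away from $0$, a smooth curve along which one checks directly (this is the content of \cite[Proposition 11.3.3]{NemSzi12}) that $\Sk$ is still smooth for $k\gg 0$. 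Hence the only possible singular point of $(\Sk,0)$ is $0$ itself, giving the first assertion.

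For the second assertion I would argue that $\partial\Sk$, the link of the isolated singularity $(\Sk,0)$, is diffeomorphic to $\partial F$. The idea is to realize both as boundaries of the same ``thick'' piece of $X$. Concretely, fix a small $\varepsilon$ and work inside $X_\varepsilon=\X\cap\rho^{-1}([0,\varepsilon])$. The Milnor fiber $F$ is $\{f=\delta\}\cap X_\varepsilon$ for $0<\delta\ll\varepsilon$, and its boundary is $\{f=\delta\}\cap\{\rho=\varepsilon\}$. On the other hand the link $\partial\Sk$ is $\{f=|g|^k\}\cap\{\rho=\varepsilon'\}$ for $0<\varepsilon'\ll1$. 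The plan is to interpolate: on the locus $\{|g|=c\}$ for an appropriate constant $c$ (so that $c^k=\delta$) the equation $f=|g|^k$ becomes $f=\delta$, so $\Sk$ and the Milnor fiber literally agree there; one then needs to see that the region of $\Sk$ with $|g|\leqslant c$ is a collar, i.e. contributes nothing to the diffeomorphism type of the boundary, using that $g$ defines a smoothing-compatible function and that for $k$ large the ``$|g|$ small'' part of $\Sk$ retracts onto its $\{|g|=c\}$ slice. This is precisely the Némethi--Szilárd mechanism (carré vs. rounding of the Milnor tube), and in our setting nothing new is needed beyond checking that the companion conditions \ref{g smooth}--\ref{discr} make their transversality and genericity arguments go through verbatim; the Newton-nondegeneracy of $f$ is not used at this stage — it will only be used later, to make the resulting graph computable.

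The main obstacle, and the step that requires genuine care rather than citation, is the behaviour at and near the origin of $X$ when $X$ is singular: Némethi--Szilárd work with $X=\C^3$ smooth, so one must verify that their local normal forms for $\Sk$ near points of $\Sigma\Vf\setminus\{0\}$ and their collar argument survive the presence of the (necessarily non-isolated, since $\Vf\supset\Sing(X)$) singular locus of $X$. The right way to handle this is to invoke \cite[Proposition 4.13]{cur19.2}, which is exactly the statement for arbitrary reduced surface singularities of $X$ — so strictly speaking the proof here is: the function $g=\ell|_X$ for a generic linear form $\ell$ is a companion of $f$ by \cite[Lemma 4.4]{cur19.2}, hence \cite[Proposition 4.13]{cur19.2} applies directly and gives both conclusions. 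I would therefore present the proof as a short deduction from these two cited results, spelling out only that the hypotheses of \cite[Proposition 4.13]{cur19.2} (existence of a companion, $k$ even and large) are met, and noting that $k\in2\N$ is imposed so that $|g|^k=(g\bar g)^{k/2}$ is real-analytic, which is what makes $\Sk$ a real-analytic variety in the first place.
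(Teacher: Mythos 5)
Your proposal is correct and matches the paper's treatment: the proposition is stated here purely as a recalled result, with no proof given beyond the citations to \cite[Proposition 11.3.3]{NemSzi12} and \cite[Proposition 4.13]{cur19.2}, and your final reduction to \cite[Proposition 4.13]{cur19.2} (after noting that a companion exists by \cite[Lemma 4.4]{cur19.2} and that $k\in 2\N$ makes $|g|^k=(g\bar g)^{k/2}$ real-analytic) is exactly how the paper uses it. The preliminary sketch of the local smoothness and collar arguments is a reasonable account of the cited mechanism, though not something the paper itself reproduces.
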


This point on, the goal is therefore to describe the manifold $\partial \Sk$. This is done in analogy with the case of an isolated singularity of complex surface, by exhibiting a resolution $\Pi \colon (\Sb,E)\to (\Sk,0)$ of $\Sk$ whose exceptional locus is a transversal union of smooth orientable closed real analytic surfaces in a $4$-dimensional oriented real analytic manifold $\Sb$, and using Theorem \ref{thm corresp boundary plumbing graph}.

However, the existence of such a resolution is not obvious, because for arbitrary germs of 4-dimensional real analytic varieties, we expect the preimage of the origin to be $3$-dimensional. We build an \emph{ad hoc} resolution in several steps.
\begin{itemize}

\item The first step is a modification $\rx \colon (\Xt,\rx^{-1}(0)\to (X,0)$ of the ambient germ which is adapted to the pair $(f,g)$ in the following sense:

\begin{defn}\label{def adapted modif} A modification $\rx\colon (\Xt,\rx^{-1}(0)) \rightarrow (X,0)$ is said to be \textbf{adapted} to the morphism $(f,g)$, where $g$ is a companion of $f$, if the two following conditions are simultaneously satisfied:
\begin{enumerate}
\item \label{modif locus} The modification $\rx$ is an isomorphism outside of $(\Vf\setminus \Vg) \cup \{0\}$. Or equivalently, $\rx$ may not be an isomorphism only at $\Vf\setminus \Vg$ or $\{0\}$.

\item \label{sncd} Denote $\Dt:=\rx^{-1}(\V)$.
Denote by $\Do$ the union of the irreducible components of $\Dt$ sent on the origin by $\rx$. Denote by $\Dfex$ the union of the components of $\mathbb{D}$ sent on curves in $V(f)$, and by $\Vft$ and $\Vgt$ the strict transforms of $\Vf$ and $\Vg$ by $r_X$ respectively. Finally, denote $\Df:=\Dfex \cup \Vft$.

We define two complex curves in $\Xt$ by
$$\Courbest:=\left(\Df \cap \Do\right) \cup (\Df \cap \Vgt).$$ and $$\Courbes:=\left(\Df \cap \Do\right)\cup \left(\Dfex \cap \Vgt\right),$$ the union of the compact irreducible components of $\Courbest.$ 

With these notations, the second condition imposed on $\rx$ is that the total transform $$\Dt:=\rx^{-1}(\V)$$ of $\V$ shall be a simple normal crossings divisor at every point of $\Courbest$.
\end{enumerate}
\end{defn}

In these conditions, define $\Skt$ to be the strict tranform of $\Sk$ by $\rx$. We obtain a modification $$\tr \colon (\Skt,\Courbes)\to (\Sk,0).$$

In this work, this first modification will be built in Section \ref{section construction graphe}, by refining the cone $\sigma$ corresponding to the ambient variety $X=X_\sigma$.

\item Then one needs to resolve the singularities of $\Skt$, which are located along $\Courbes$. This is done by normalizing $\Skt$, and then comparing locally the normalization $\Sn$ of $\Skt$ with the normalization of a suitable singularity of complex surface, to resolve the singularities that remain after normalization. These two steps correspond to the modification of the graph $\Gcdt$ made in Section \ref{section sufficiency}.

\end{itemize}

The local equations of $\Skt$ along points of $\Courbest$ are determined by multiplicities of the pullbacks of $f$ and $g$ to $\Xt$. If $D_i$ is an irreducible component of $\Dt$, denote respectively $m_i,n_i$ the multiplicities of $f\circ \rx$ and $g\circ \rx$ on $D_i$. The following decorated graph will contain all the necessary data for the description of the exceptional locus $E$ of the resolution of $\Sk$:

\begin{defn}\label{def gcdt}
Denote $\Gct$ the dual graph of the configuration of complex curves $\Courbest$. The decorated graph $\Gcdt$ is obtained from $\Gct$ in the following way:
\hfill
\begin{itemize}
\item If $C$ is an irreducible component of $D_1\cap D_2$, where $D_1\in \Df$ and $D_2\in \Do\cup \Vgt$, decorate $v_C$, the vertex corresponding to $C$, with the triple $(m_1;m_2,n_2)$, and with its genus $[g]$, in square brackets. If $D_1\in \Vft$ and $D_2 \in \Vgt$, then the vertex associated to the non-compact curve $C$ is an arrowhead.
\item Decorate each edge $e_p$ of $\Gct$ corresponding to a double point $p$ of $\Courbest$ with $\plus$ if this point is on exactly one component of $\Df$, and with $\minus$ if it is on exactly two different components of $\Df$.

\end{itemize}
\end{defn}

\section{Tools of toric geometry}\label{section toric}
We give in this Section the necessary material to proceed to Section \ref{section construction graphe} and the construction of $\Gcdt$. What follows is mainly based on the books \cite{Oka97} of Oka and \cite{CoxLitSch} of Cox, Little, Schenk. See also the foundational article \cite{Oka87}, where the author computes the link of a Newton non degenerate isolated singularity of surface.

\subsection{Fans, toric varieties and toric modifications}
We start with the definition of the affine variety associated to a cone and to a fan, their local properties, and conclude this subsection by discussing the modification associated to a refinement of a fan.

\begin{defn}\label{def cones}
In this work, a $3$-dimensional \textbf{lattice} $N$ is a free group isomorphic to $(\Z^3,0)$. Define the \textbf{integral length} of an element $u\in N$ as $$l(u)=max\{n\in \mathbb{N},\exists ~ v\in N\setminus \{0\} \text{ such that } u=n\cdot v\}.$$ An element of $N$ is called \textbf{primitive} if its integral length is equal to $1$.

A \textbf{cone} $\sigma$ in $N$ is a subset of $N_\R:=N\otimes_\Z \R\simeq \R^3$ of the form $$\langle v_1,\dots,v_k \rangle := \{r_1\cdot v_1+\dots +r_k\cdot v_k, r_1,\dots ,r_k \in \R^+\}\subseteq N_\mathbb{R}$$ for some $v_1,\cdots,v_k \in N$.

A cone $\sigma$ is called \textbf{strongly convex} if it contains no linear subspace of $N_\R$.

A strongly convex cone is called \textbf{regular} if it can be generated by a family of elements of $N$ that can be completed in a basis of the $\Z$-module $N$.
\end{defn}

\begin{defn}
Denote by $M=\dual[N]:=Hom(N,\mathbb{Z})\simeq \mathbb{Z}^3$ the \textbf{dual lattice} of $N$. For $\sigma$ a convex cone of dimension $d$ in $N_{\mathbb{R}}$, its \textbf{dual cone} is $$\dual[\sigma]:=\{m\in M_{\mathbb{R}}, \forall ~ n \in \sigma, \langle m,n\rangle  \geqslant 0\}\subset M_{\mathbb{R}}$$ and its \textbf{orthogonal space} is $$\sigma^\perp :=\{m\in M_{\mathbb{R}}, \forall ~ n \in \sigma, \langle m,n\rangle = 0\}\subset \dual[\sigma],$$ where $\langle m,n \rangle$ denotes the pairing of $m\in M$ and $n\in N$.
\end{defn}

Note that if $\sigma$ is of dimension $3$, then $\sigma$ is regular if and only if $\dual[\sigma]$ is regular.

\begin{defn}
If $\sigma$ is a strongly convex cone in $N$, then $$X_\sigma:=Spec\left(\C[\dual[\sigma]\cap M]\right)$$ is the affine toric variety associated to it.
\end{defn}

Note that $X_\sigma$ is an irreducible complex affine variety of dimension $3$. See Example \ref{ex var} for an example of affine toric variety.

\begin{rk}\label{plongement var torique affine}
A family $m_1,\cdots,m_k \in M$ generating the semigroup $\dual[\sigma]\cap M$ provides an embedding $X_\sigma \hookrightarrow Spec\left(\C[\chi^{m_1},\cdots,\chi^{m_k}]\right)={\C^k}$, where $\chi^{m}$ denotes the element of $\C[M]$ associated to $m\in M$.
\end{rk}

%\begin{prop} 
%Closed points of $X_{\sigma}$ correspond to semigroup morphisms $(S_\sigma,+) \rightarrow (\mathbb{C},\cdot)$. Denote $\Tau_N$ the set of those morphisms whose image is contained in $\C^*$. The set $\Tau_N$ is (non canonically) isomorphic to $(\C^*)^3$ and dense in $X_\sigma$. This group is called the (algebraic) \textbf{torus} associated to $N$. The action of the group $\Tau_N$ on itself extends to a continuous action on the whole variety $X_{\sigma}$, making $\Tau_N$ the unique $n$-dimensional orbit.
%\end{prop}
%
%On closed points, the action is defined as on $\Tau_N$, by multiplication: if $\phi\in X_\sigma$ and $\psi\in \Tau_N$, then $\phi\cdot \psi\in X_\sigma$ is defined by $$\forall ~ m\in M,(\phi \cdot \psi) (m)=\phi(m)\cdot \psi(m).$$

\begin{defn}
Let $\sigma$ be a cone. A \textbf{face} of $\sigma$ is any cone of the form 

$$\sigma \cap m^{\perp}=\{n \in \sigma, \langle m,n\rangle =0\}, \text{ for some m}\in \dual[\sigma].$$ The fact that $\tau$ is a face of $\sigma$ is denoted $\tau \preceq \sigma$. 
\end{defn}

\begin{prop}\label{prop modif inclusion}(See \cite[Proposition 1.3.16]{CoxLitSch}.) If $\tau \subset \sigma$, then the inclusion $\C[\dual[\sigma]] \subset \C[\dual[\tau]]$ gives rise to a canonical birational morphism of algebraic varieties $X_\tau \rightarrow X_\sigma$. If $\tau$ is a face of $\sigma$, then this morphism is an inclusion.
\end{prop}

\begin{proof}[Proof of the birationality]
The cones $\sigma, \tau$ are strongly convex, hence the dual cones $\dual[\sigma],\dual[\tau]$ are $3$-dimensional, and the fraction fields $\C(\dual[\sigma]\cap M),\C(\dual[\tau]\cap M)$ are both equal to $\C(M)$.
\end{proof}

The following definition will allow us to define non-affine toric varieties:
\begin{defn}\label{def fan}
A \textbf{fan} $\Fan$ in $N_{\mathbb{R}}$ is a finite set of strongly convex cones such that:
\begin{enumerate}
\item If $\sigma \in \Fan$, any face of $\sigma$ is in $\Fan$.
\item The intersection of two cones of $\Fan$ is a face of each.
\end{enumerate}
\smallskip
The \textbf{support} $|\Fan|$ of the fan $\Fan$ is the union $\bigcup \limits_{\sigma \in \Fan} \sigma$ of the cones composing it.
\end{defn}

Note that a cone, together with the collection of its faces, defines a fan. 

\begin{defn} 
A fan $\Fan$ in $N_{\mathbb{R}}$ defines a \textbf{toric variety} $X_\Fan$ in the following way: take the disjoint union of the $X_\sigma$'s, for all $\sigma$ in $\Fan$, and, if $\sigma$ and $\sigma'$ are cones of $\Fan$, glue $X_\sigma$ and $X_{\sigma'}$ along $X_{\sigma \cap \sigma'}$.

$$X_\Fan := \faktor{\left(\bigsqcup \limits_{\sigma \in \Fan} X_\sigma\right)}{\left(X_\sigma \underset{X_{\sigma\cap \sigma'}}{\sim} X_{\sigma'}\right)}$$
\end{defn}

The following key proposition expresses some of the main relations between the combinatorial properties of $\Fan$ and the geometric properties of $X_\Fan$.

\begin{prop}\label{prop orb comb}(See \cite[Theorems 1.3.12 and 3.2.6]{CoxLitSch}.) If $X_\Fan$ is the toric variety associated to the fan $\Fan$, there is a group $\Tau_N$, isomorphic to the algebraic torus $(\C^*)^3$, acting on $X_\Fan$, such that the action of $\Tau_N$ on $X_\Fan$ has the following properties:
\begin{enumerate}
\item There is a $1-1$ correspondance between cones of $\Fan$ and orbits of $X_\Fan$.
\item The orbit $O_\sigma$ associated to a $d$-dimensional cone $\sigma$ of $\Fan$ is an algebraic variety isomorphic to $(\C^*)^{3-d}$.
\item \label{point orb closure} The closure $\overline{O_\sigma}$ of the orbit $O_\sigma$ of $X_\Fan$ is equal to $$\overline{O_\sigma}=\bigcup\limits_{\sigma \preceq \tau} O_\tau.$$
\item The variety $X_\Fan$ is non-singular at every point of $O_\sigma$ $\Leftrightarrow$ it is non-singular at one of its points $\Leftrightarrow \sigma$ is a regular cone, in the sense of Definition \ref{def cones}.
\end{enumerate}
\end{prop}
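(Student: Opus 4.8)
This statement collects standard structural facts of toric geometry; the plan is to build the $\Tau_N$-action on $X_\Fan$ from the affine charts and then verify the four properties in turn, following \cite{CoxLitSch}. I would set $\Tau_N:=\operatorname{Spec}\left(\C[M]\right)$, whose $\C$-points are the characters $\operatorname{Hom}(M,\C^*)\cong(\C^*)^3$. For each $\sigma\in\Fan$ the inclusion of semigroups $\dual[\sigma]\cap M\hookrightarrow M$ induces $\C[\dual[\sigma]\cap M]\hookrightarrow\C[M]$, hence an algebraic action of $\Tau_N$ on $X_\sigma$; one checks on semigroup algebras that this action is compatible with the localizations $\C[\dual[\sigma]\cap M]\hookrightarrow\C[\dual[\tau]\cap M]$ attached to faces $\tau\preceq\sigma$ (Proposition \ref{prop modif inclusion}), so the charts glue $\Tau_N$-equivariantly and one obtains an action of $\Tau_N$ on $X_\Fan$.

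The orbits are then located through the distinguished points. Viewing a point of $X_\sigma$ as a semigroup homomorphism $\gamma\colon(\dual[\sigma]\cap M,+)\to(\C,\times)$, I define, for a face $\tau\preceq\sigma$, the point $x_\tau$ to be the homomorphism equal to $1$ on $(\dual[\sigma]\cap M)\cap\tau^\perp$ and to $0$ elsewhere. One checks, from the description of the charts by semigroup homomorphisms, that $x_\tau$ does not depend on the ambient $\sigma$ and that $x_\tau$ lies in the chart $X_\sigma$ precisely when $\tau\preceq\sigma$. Setting $O_\tau:=\Tau_N\cdot x_\tau$, a direct stabilizer computation gives $O_\tau\cong\operatorname{Hom}(\tau^\perp\cap M,\C^*)\cong(\C^*)^{3-\dim\tau}$, which is property (2). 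For property (1): for any $\gamma$ as above, the sub-semigroup on which $\gamma$ is nonzero is a face of $\dual[\sigma]\cap M$, corresponding to a unique face $\tau\preceq\sigma$, and then $\gamma$ factors through $\tau^\perp\cap M$, i.e. $\gamma\in O_\tau$; hence the orbits in $X_\sigma$ are exactly the $O_\tau$, $\tau\preceq\sigma$, and since these match on overlaps they assemble into a bijection between cones of $\Fan$ and orbits of $X_\Fan$.

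For property (3), if $\tau\preceq\sigma$ and $v\in N$ lies in the relative interior of $\sigma$, the one-parameter subgroup $\lambda^v\colon\C^*\to\Tau_N$ satisfies $\lim_{t\to 0}\lambda^v(t)\cdot x_\tau=x_\sigma$, since $t^{\langle m,v\rangle}x_\tau(m)$ tends to $1$ when $m\in\sigma^\perp$ and to $0$ otherwise; thus $x_\sigma\in\overline{O_\tau}$ and so $O_\sigma\subseteq\overline{O_\tau}$, giving $\bigcup_{\tau\preceq\sigma}O_\sigma\subseteq\overline{O_\tau}$. Conversely $\overline{O_\tau}$ is closed and $\Tau_N$-stable, hence a union of orbits, and if $\tau\not\preceq\sigma$ then $O_\tau$, hence $\overline{O_\tau}$, does not meet the chart $X_\sigma$, so it cannot contain $O_\sigma$; this is the reverse inclusion. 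Finally, for property (4): the action is transitive on $O_\sigma$, so the smooth locus of $X_\Fan$ meets $O_\sigma$ in a $\Tau_N$-stable subset, hence in either the empty set or all of $O_\sigma$ — the first equivalence; moreover $X_\sigma$ is an open neighbourhood of $O_\sigma$ whose singular locus is closed and $\Tau_N$-stable, hence contains the unique closed orbit $O_\sigma$ whenever it is nonempty, so $X_\Fan$ is smooth along $O_\sigma$ iff $X_\sigma$ is smooth iff the semigroup $\dual[\sigma]\cap M$ is generated by a $\Z$-basis of $M$, iff $\dual[\sigma]$ — equivalently $\sigma$, by the remark preceding the proposition — is regular in the sense of Definition \ref{def cones}.

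The step I expect to be the main obstacle is property (3): besides the limit computation along one-parameter subgroups, one must rule out spurious orbits in $\overline{O_\tau}$, which comes down to the fact that $x_\tau$ belongs only to the charts $X_\sigma$ with $\tau\preceq\sigma$. The other point needing care, underlying (1), is to check that the distinguished points, and hence the whole orbit decomposition, are intrinsic, i.e. compatible with the gluing defining $X_\Fan$.
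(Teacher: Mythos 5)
The paper offers no proof of this proposition, deferring entirely to \cite[Theorems 1.3.12 and 3.2.6]{CoxLitSch}; your argument is exactly the standard orbit--cone correspondence proof given there (gluing the $\Tau_N$-action from the semigroup algebras, distinguished points $x_\tau$, one-parameter subgroup limits for the closure relation, and $\Tau_N$-stability of the smooth locus), and it is correct. The only nitpick is in property (4): for a cone $\sigma$ of dimension $d<3$ the semigroup $\dual[\sigma]\cap M$ of a smooth chart is generated by part of a basis together with the inverses of the remaining basis elements, not by a basis itself, so one should either phrase it that way or reduce to the full-dimensional case.
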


\begin{rk}
In particular, $O_{\{0\}}=\Tau_N$ and this orbit is dense in $X_\Fan$. Furthermore, if $\Fan=\sigma$, then $O_\sigma$ is the unique minimal-dimensional orbit in $X_\sigma$. If $dim(\sigma)=3$, it is called the \textbf{origin} of $X_\sigma$, and denoted $0_{X_\sigma}$.

With this notation, if $\sigma$ is of maximal dimension, we get the \textbf{germ of toric variety} $(X_\sigma,0_{X_\sigma})$, or $(X_\sigma,0)$, whose local ring of germs of holomorphic functions is $$\mathcal{O}_{X_\sigma,0}=\C\{\dual[\sigma]\cap M\},$$ where $\mathbb{C}\{\dual[\sigma] \cap M\}$ denotes the $\C$-algebra of convergent series with complex coefficients and exponents in the additive semigroup $\dual[\sigma] \cap M$.
\end{rk}

One can encode certain modifications of a given variety $X_\Fan$ through a refinement of $\Fan$:
\begin{defn}\label{def refinement}
A \textbf{refinement} of a fan $\Fan$ in $N_{\mathbb{R}}$ is another fan $\Fan'$ in $N_{\mathbb{R}}$ such that: $$|\Fan|=|\Fan'| \text{ and }\forall ~ \sigma' \in \Fan', \exists ~ \sigma \in \Fan \text{ such that } \sigma' \subset \sigma.$$
\end{defn}

The following \emph{ad hoc} definition is a matter of vocabulary and is introduced for pure reasons of convenience in the exposition made in Section \ref{section construction graphe}.

\begin{defn}
Let $\Fan'$ be a refinement of a fan $\Fan$. If $\tau$ is a cone of $\Fan'$, we call \textbf{minimal containing cone} of $\tau$ in $\Fan$ the minimal-dimensional cone of $\Fan$ containing $\tau$.

If $\tau,\sigma$ are two cones such that $\tau\subset \sigma$, the \textbf{minimal containing face} of $\tau$ in $\sigma$ is the minimal-dimensional face of $\sigma$ containing $\tau$. A cone $\tau$ is said to be in the interior of $\sigma$ if its minimal containing face in $\sigma$ is $\sigma$ itself.
\end{defn}

\begin{defn}Let $\Fan$ be a fan in $N$, and $\Fan'$ a refinement of $\Fan$. The {\bf toric morphism} $\Pi_{\Fan',\Fan} : X_{\Fan'}\rightarrow X_{\Fan}$ associated to this refinement is obtained by gluing the morphisms given by the inclusions of cones of $\Fan'$ in the cones of $\Fan$, defined in Proposition~\ref{prop modif inclusion}.
\end{defn}

\begin{prop}\label{prop modif refinement}(See \cite[Lemma 3.3.21]{CoxLitSch}.) The morphism $\Pi_{\Fan',\Fan}$ is a modification of  $X_{\Fan}$. 
It has the following combinatorial property: if $\sigma' \in \Fan'$, let $\sigma$ be the minimal containing cone of $\sigma'$ in $\Fan$. Then $\Pi_{\Fan',\Fan}(\orb[\sigma'])\subset \orb[\sigma]$.

%The critical locus $E_{\Fan',\Fan}$ of $\Pi_{\Fan',\Fan}$ is exactly the union $$\bigsqcup\limits_{\tau\in \Fan',\tau\notin \Fan} \orb[\tau] \in X_{\Fan'}$$ of orbits of $X_{\Fan'}$ corresponding to new cones, and the discriminant locus $\Delta(\Pi_{\Fan',\Fan})$ is $$\bigsqcup\limits_{\tau\in \Fan,\tau\notin \Fan'} \orb[\tau] \in X_{\Fan},$$ the union of orbits of $X_\Fan$ corresponding to cones that have been subdivided.
\end{prop}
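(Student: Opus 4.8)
The plan is to reduce the statement to the well-known affine picture, where it becomes the classical statement that, for a face $\sigma'$ of $\sigma$, the inclusion $X_{\sigma'}\hookrightarrow X_\sigma$ sends the torus orbit of $\sigma'$ into the orbit of $\sigma$, and then to assemble these local statements compatibly along the gluing that defines $X_{\Fan'}$ and $X_\Fan$.

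First I would fix $\sigma'\in\Fan'$ and let $\sigma$ be its minimal containing cone in $\Fan$. By Definition~\ref{def refinement} there is some cone of $\Fan$ containing $\sigma'$, so $\sigma$ is well defined, and $\sigma'\subset\sigma$. Then I invoke Proposition~\ref{prop modif inclusion} to get the canonical birational morphism $X_{\sigma'}\to X_\sigma$ coming from the inclusion of semigroup algebras $\C[\dual[\sigma]\cap M]\subset\C[\dual[\sigma']\cap M]$; by construction of $\Pi_{\Fan',\Fan}$ as the gluing of exactly these maps, the restriction of $\Pi_{\Fan',\Fan}$ to the open chart $X_{\sigma'}\subset X_{\Fan'}$ is this morphism, landing in the open chart $X_\sigma\subset X_\Fan$. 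So it suffices to prove the orbit inclusion for the affine map $X_{\sigma'}\to X_\sigma$.

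Next I would compute directly on orbits using Proposition~\ref{prop orb comb}. Inside $X_{\sigma'}$, the orbit $\orb[\sigma']$ is the distinguished stratum corresponding to the cone $\sigma'$ itself (the unique minimal-dimensional orbit of the chart $X_{\sigma'}$), cut out by $\chi^m=0$ for all $m\in\dual[\sigma']\cap M$ with $m\notin(\sigma')^{\perp}$, and by $\chi^m\neq 0$ for $m\in(\sigma')^{\perp}\cap M$. I want to see where the map sends a point $x$ of $\orb[\sigma']$. A point of $X_\sigma$ is a semigroup homomorphism $\dual[\sigma]\cap M\to(\C,\cdot)$, and the map $X_{\sigma'}\to X_\sigma$ is restriction of such a homomorphism along $\dual[\sigma]\cap M\subset\dual[\sigma']\cap M$. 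For $x\in\orb[\sigma']$, the associated homomorphism sends $m$ to a nonzero value exactly when $m\in(\sigma')^{\perp}$. Hence the image point sends $m\in\dual[\sigma]\cap M$ to a nonzero value exactly when $m\in(\sigma')^{\perp}\cap\dual[\sigma]$, i.e. when $m$ vanishes on the linear span of $\sigma'$. The key combinatorial point is that, since $\sigma$ is the minimal containing cone of $\sigma'$ in $\Fan$, the relative interior of $\sigma'$ meets the relative interior of $\sigma$; therefore an element $m\in\dual[\sigma]$ that vanishes on $\sigma'$ vanishes on all of $\sigma$, so $(\sigma')^{\perp}\cap\dual[\sigma]=\sigma^{\perp}\cap\dual[\sigma]$. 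By the orbit–cone description in Proposition~\ref{prop orb comb}, a point of $X_\sigma$ whose nonvanishing locus of characters is exactly $\sigma^{\perp}\cap M$ lies in $\orb[\sigma]$. This gives $\Pi_{\Fan',\Fan}(\orb[\sigma'])\subset\orb[\sigma]$.

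The main obstacle is precisely the combinatorial lemma invoked above: that minimality of $\sigma$ among cones of $\Fan$ containing $\sigma'$ forces the relative interior of $\sigma'$ to be contained in the relative interior of $\sigma$ (equivalently, $\sigma'$ is not contained in any proper face of $\sigma$). This needs a short argument: if $\sigma'$ were contained in a proper face $\tau\precneq\sigma$, then $\tau\in\Fan$ by the face axiom for fans, and $\tau$ would be a strictly smaller cone of $\Fan$ containing $\sigma'$, contradicting minimality. Once this is in place, the translation into the vanishing/nonvanishing pattern of characters and the appeal to Proposition~\ref{prop orb comb} are routine. I would also remark that the statement that $\Pi_{\Fan',\Fan}$ is a modification follows from Proposition~\ref{prop modif refinement}'s cited reference, or alternatively from the birationality established in the proof of Proposition~\ref{prop modif inclusion} together with properness, which holds because $|\Fan'|=|\Fan|$; I would not belabour this point.
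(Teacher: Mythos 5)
Your proof is correct. The paper itself offers no argument for this proposition — it simply cites \cite[Lemma 3.3.21]{CoxLitSch} — and your reduction to the affine charts, the description of points as semigroup homomorphisms with prescribed non-vanishing locus $(\sigma')^{\perp}\cap M$, and the key combinatorial observation that minimality of $\sigma$ forces $\operatorname{relint}(\sigma')\subset\operatorname{relint}(\sigma)$ (hence $(\sigma')^{\perp}\cap\dual[\sigma]=\sigma^{\perp}\cap\dual[\sigma]$) is exactly the standard argument underlying the cited lemma; your treatment of properness via $|\Fan'|=|\Fan|$ is likewise correct.
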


\subsection{Modification associated to a germ of function}\label{subs modif ass fct}

Let $(X_\sigma,0)$ be the germ of affine normal variety associated to a $3$-dimensional strongly convex cone. 
%We want to study the germ of hypersurface $(V(f),0)\subset (X_\sigma,0)$.
%
%In order to do this, let us introduce the Local Newton Polyhedron of a germ of function on a germ of normal toric variety. 

\begin{defn}
\begin{itemize}
\item Let $f =\sum\limits_{m_i\in \dual[\sigma] \cap M} a_i \chi^{m_i} \in \C \{\dual[\sigma] \cap M\}$. The \textbf{support of $f$} is defined as $\Supp(f):=\bigcup\limits_{a_i \neq 0} \{m_i\} \subset \dual[\sigma] \cap M$.
\item The \textbf{Local Newton polyhedron of $f$ at the origin of $X_\sigma$} is defined as $$\LNP[f]:=\Conv\left(\Supp(f) + \dual[\sigma]\right)$$ where $``+''$ denotes the Minkowski sum in $M$ and $\Conv$ denotes the convex hull of a set in a real vector space.
\end{itemize}
\end{defn}

This definition of Newton polyhedron for general germs may be found in \cite[Definition 5]{Ste14}, or \cite[Definition 8.7]{PopSte13}.

The following lemma is a direct consequence of the definitions, and allows one to easily read some parts of the zero locus of a function defined on a germ of toric variety.

\begin{lemma}\label{lemma zerolocus contains orb}
Let $\tau$ be a face of a cone $\sigma$. The ideal $I(\tau)\subset \C\{\dual[\sigma]\cap M\}$ of functions cancelling on the orbit $O_\tau$ of $X_\sigma$ is made of the functions $f\in \C\{\dual[\sigma]\cap M\}$ such that $\Supp(f)\cap \tau^\perp=\emptyset$.
\end{lemma}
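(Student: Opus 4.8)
The statement is a purely combinatorial identification of the ideal $I(\tau)$ of functions on $X_\sigma$ vanishing on the orbit $O_\tau$, so the natural strategy is to pass through the torus-orbit description of Proposition~\ref{prop orb comb}. First I would recall that $\C\{\dual[\sigma]\cap M\}=\mathcal{O}_{X_\sigma,0}$ decomposes as a direct sum (over $m\in\dual[\sigma]\cap M$) of the one-dimensional eigenspaces $\C\cdot\chi^m$ for the $\Tau_N$-action, so it suffices to test the vanishing condition on monomials and then extend by linearity (and convergence), exactly as one does for polynomial ideals generated by monomials. Thus the core of the argument reduces to: given a face $\tau\preceq\sigma$ and $m\in\dual[\sigma]\cap M$, the monomial $\chi^m$ vanishes identically on $O_\tau$ if and only if $m\notin\tau^\perp$, i.e. $\langle m,n\rangle>0$ for some (equivalently, by the eigenvector property, for a generic) $n$ in the relative interior of $\tau$.

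\textbf{Key steps.} Concretely I would proceed as follows. (1) Identify $O_\tau$: since $\tau$ is a face of $\sigma$, $X_\tau$ is an open subset of $X_\sigma$, and the orbit $O_\tau\subset X_\tau$ is the locus cut out inside the torus $\Tau_N$-closure picture by Proposition~\ref{prop orb comb}; explicitly, a point of $O_\tau$ is a semigroup homomorphism $\gamma\colon \dual[\sigma]\cap M\to(\C,\cdot)$ with $\gamma(m)\neq 0$ precisely when $m\in\tau^\perp$ and $\gamma(m)=0$ otherwise (this is the standard ``limit point'' / distinguished point description, and $O_\tau$ is its $\Tau_N$-orbit, isomorphic to $(\C^*)^{3-\dim\tau}$ as in the proposition). (2) Evaluate a monomial: for such $\gamma$, $\chi^m(\gamma)=\gamma(m)$, which is nonzero iff $m\in\tau^\perp$; translating by the torus action does not change whether this is zero. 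Hence $\chi^m$ vanishes on all of $O_\tau$ iff $m\notin\tau^\perp$. (3) Assemble the ideal: write $f=\sum_{m\in\Supp(f)}a_m\chi^m$. Because the $\chi^m$ are $\Tau_N$-eigenvectors with distinct characters and $O_\tau$ is $\Tau_N$-invariant, $f|_{O_\tau}=0$ forces each $a_m\chi^m|_{O_\tau}=0$, i.e. $a_m=0$ whenever $m\in\tau^\perp$; equivalently $\Supp(f)\cap\tau^\perp=\emptyset$. The converse is immediate from step (2). This proves $I(\tau)=\{f:\Supp(f)\cap\tau^\perp=\emptyset\}$.

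\textbf{Expected main obstacle.} The routine parts (linearity, the eigenvector separation argument in step (3)) are standard; the only point that needs a little care is step (1), namely writing down $O_\tau$ explicitly enough inside $X_\sigma=\operatorname{Spec}\C[\dual[\sigma]\cap M]$ to evaluate monomials on it. The clean way is to use the distinguished point $x_\tau$ of $X_\sigma$ associated to the face $\tau$ — the semigroup homomorphism sending $m\mapsto 1$ if $m\in\tau^\perp$ and $m\mapsto 0$ otherwise — check that this is well defined (one must verify it respects the semigroup operation, which uses that $\tau^\perp$ is a face of the semigroup $\dual[\sigma]\cap M$, a consequence of $\tau$ being a face of $\sigma$), observe $x_\tau\in O_\tau$, and note that $O_\tau=\Tau_N\cdot x_\tau$ with $\Tau_N$ acting by rescaling each $\chi^m$ by a nonzero scalar. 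Once this is in place the vanishing criterion for a single monomial is transparent and the rest follows. Alternatively, one can simply invoke the orbit–cone description already granted in Proposition~\ref{prop orb comb}\eqref{point orb closure} together with the coordinate description of $X_\tau$ as an open chart, and reduce to the sub-torus $O_\tau\cong(\C^*)^{3-\dim\tau}$ on which a monomial $\chi^m$ restricts to the corresponding monomial in the $\tau^\perp\cap M$ coordinates, hence is identically zero exactly when $m\notin\tau^\perp$.
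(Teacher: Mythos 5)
Your argument is correct: the reduction to monomials via the torus action, the evaluation of $\chi^m$ at the distinguished point of $O_\tau$ (nonzero iff $m\in\tau^\perp$), and the linear independence of the distinct characters $\chi^m|_{O_\tau}$ for $m\in\tau^\perp\cap M$ together give exactly the stated description of $I(\tau)$. The paper itself offers no proof, declaring the lemma a direct consequence of the definitions, so your write-up simply supplies the standard orbit--cone argument that the paper leaves implicit.
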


\begin{defn}
Let $v\in \sigma$, and $f\in \C\{\dual[\sigma]\cap M\}$. Define the \textbf{height of $\LNP[f]$ in the direction $v$} to be $$h_v(f):=\min\limits_{m\in \LNP[f]} \langle m,v\rangle\in \R_+.$$
\end{defn}

\begin{defn} \label{def eventail from polyhedron} Let $\sigma$ be a cone in $N$, and $f\in \C\{\dual[\sigma]\cap M\}$. Let $\Delta$ be a face of $\NP(f)$ of dimension $d$. Then the set $$\tau_\Delta:=\{v\in \sigma, \Delta_v=\Delta\}=\left\{v\in \sigma/ \forall ~ m\in \Delta, \langle m,v\rangle = h_v(f)\right\}$$ is a cone of codimension $d$ contained in $\sigma$. The set  $$\Fan_f := \{\tau_\Delta, \Delta \subset \NP(f)\}$$ is the \textbf{fan associated to $f$.}

The morphism $\Pi_{\Fan_f}\colon X_{\Fan_f}\rightarrow X_\sigma$ coming from the refinement of $\sigma$ is called the \textbf{modification of $X_\sigma$ associated to $f$.}
\end{defn}

\begin{rk}
Note that on any cone belonging to the fan $\Fan_f$, the height $h_v(f)$ is linear in the argument $v$. This minimal subdivision of $\sigma$ in domains of linearity of the height function is in fact an alternative definition of $\Fan_f$.
\end{rk}

Let us describe the behaviour of the strict tranform $\Vft$ of $\Vf$ under this modification, and more generally, under any toric modification factorizing through $\Pi_{\Fan_f}$.

Let $\Fan$ be a refinement of $\Fan_f$, and $\tau\subset \sigma$ a cone of $\Fan$. Denote $$\Pi_\Fan\colon \left(X_\Fan, \Pi_\Fan^{-1}(0)\right)\rightarrow (X_\sigma,0)$$ the modification associated to $\Fan$, and $\tilde{f}=f\circ \Pi_\Fan$ the pullback of $f$ by this modification. Denote $$\Delta_\tau:=\left\{m\in \LNP[f],\forall ~ v\in \tau, \langle m,v \rangle =h_v(f)\right\}.$$ 

Note that we may have $\tau\neq \tau'$ and still $\Delta_\tau=\Delta_{\tau'}$. However, the assumption that $\Fan$ is a refinement of $\Fan_f$ ensures that this definition makes sense. In fact, if $\gamma$ is the minimal containing cone of $\tau$ in $\Fan_f$, then $\Delta_\tau=\Delta_\gamma$. 

Note also that $dim(\Delta_\tau)\leqslant codim (\tau)$, and that $\tau \preceq \sigma \Rightarrow \Delta_\sigma \subset \Delta_\tau$.

\begin{lemma}\label{lemma orb int}(See \cite[Assertion 3.6.1]{Oka97}.) If $\Fan$ is a refinement of $\Fan_f$, and $\tau\in \Fan$, then $$\widetilde{V(f)}\cap \orb[\tau]\neq \emptyset \Leftrightarrow \dim(\Delta_\tau) \neq 0.$$
\end{lemma}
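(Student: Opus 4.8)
The plan is to understand the statement geometrically: $\widetilde{V(f)}$ is the strict transform of $V(f)$ under the toric modification $\Pi_\Fan$, and $\orb[\tau]$ is the torus orbit of the cone $\tau$ in $X_\Fan$. The claim is that this strict transform meets $\orb[\tau]$ precisely when the face $\Delta_\tau$ of the Newton polyhedron associated to $\tau$ has positive dimension. First I would reduce to a local computation on the affine chart $X_\tau$ associated to $\tau$, where $\orb[\tau]$ is the minimal orbit, isomorphic to $(\C^*)^{3-\dim\tau}$, cut out by the monomials $\chi^m$ with $m \notin \tau^\perp$ (Lemma \ref{lemma zerolocus contains orb}). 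In this chart the pullback $\tilde f = f \circ \Pi_\Fan$ factors as $\chi^{m_0} \cdot u$ for a suitable vertex $m_0$ and a function $u$; more precisely, writing $f = \sum a_i \chi^{m_i}$, one factors out the monomial realizing the height $h_v(f)$ along $\tau$, and the remaining sum is over exponents $m_i - m_0$, which lie in $\dual[\tau]\cap M$. The strict transform $\Vft$ is then $\{u = 0\}$ locally.

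Next I would compute the restriction of $u$ to $\orb[\tau]$. By Lemma \ref{lemma zerolocus contains orb}, restricting to $\orb[\tau]$ kills every monomial $\chi^{m_i - m_0}$ for which $m_i - m_0 \notin \tau^\perp$, i.e. every $m_i$ for which $\langle m_i, v\rangle > h_v(f)$ for some (hence generic) $v \in \tau$. What survives is exactly the sum of $a_i \chi^{m_i - m_0}$ over those $m_i \in \Delta_\tau$, that is, the restriction of the \emph{face function} (truncation of $f$ to the face $\Delta_\tau$) to the orbit. So $\Vft \cap \orb[\tau] \neq \emptyset$ if and only if this restricted face function has a zero on the torus $\orb[\tau] \cong (\C^*)^{\dim\tau^\perp}$. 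Now the two directions: if $\dim \Delta_\tau = 0$, then $\Delta_\tau$ is a single point $\{m_0\}$, so after dividing out $\chi^{m_0}$ the face function restricted to the orbit is the nonzero constant $a_0$, which never vanishes — hence $\Vft \cap \orb[\tau] = \emptyset$. Conversely, if $\dim \Delta_\tau = d \geqslant 1$, the face function is a Laurent polynomial in $d$ essential torus coordinates which is \emph{not} a monomial (it has at least two distinct terms after normalizing), and any such Laurent polynomial has a zero on $(\C^*)^n$ whenever $n \geqslant 1$: one can set all but one of the coordinates to generic values and solve for the last, or invoke that a non-monomial Laurent polynomial cannot be a unit on the torus.

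I expect the main obstacle to be the careful bookkeeping that the surviving terms after restriction to $\orb[\tau]$ are \emph{exactly} those indexed by $\Delta_\tau$ (not a proper subset or superset), and that the ``strict transform'' really equals $\{u=0\}$ rather than something involving exceptional components — this requires using that $\Fan$ refines $\Fan_f$, so that $h_v(f)$ is linear on $\tau$ and $\Delta_\tau = \Delta_\gamma$ for the minimal containing cone $\gamma$, guaranteeing a well-defined single monomial to factor out. The converse direction also needs the elementary fact that a Laurent polynomial with at least two monomials vanishes somewhere on the torus; I would state this as a small lemma or cite it, being slightly careful that the monomials $m_i - m_0$ with $m_i \in \Delta_\tau$ genuinely span a lattice of rank equal to $\dim \Delta_\tau$ inside $\tau^\perp \cap M$, so that they give honestly non-proportional characters of $\orb[\tau]$. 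Everything else is a direct unwinding of the definitions of $\Fan_f$, the height function, and the orbit–cone correspondence of Proposition \ref{prop orb comb}.
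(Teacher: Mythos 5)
The paper offers no proof of this lemma --- it is quoted from Oka's book --- so your reconstruction is the only argument on the table. Your route is the standard one and its skeleton is sound: in the affine chart $X_\tau$ write $f\circ\Pi_\Fan=\chi^{m_0}\cdot u$ with $m_0\in\Supp(f)\cap\Delta_\tau$, check that $u$ restricted to $\orb[\tau]$ is the normalized face function $\chi^{-m_0}f_{\Delta_\tau}$, and use that a Laurent polynomial vanishes somewhere on a torus if and only if it has at least two terms. The closure issue you flag (is $\Vft\cap\orb[\tau]$ really $\{u=0\}\cap\orb[\tau]$?) is genuine but closes cleanly: for every face $\rho\preceq\tau$ one has $m_0\in\Delta_\tau\subseteq\Delta_\rho$, so $u$ restricted to $\orb[\rho]$ still contains the constant term $a_0$ and hence $u$ vanishes identically on no orbit of the chart; therefore $V(u)$, being purely of codimension one, has no component inside the toric boundary, and $V(u)=\overline{V(u)\cap\Tau_N}=\Vft$ there. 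You should write this out rather than leave it as an ``obstacle''. Your remark about the rank of the lattice spanned by the exponents is not needed: two distinct exponents already suffice for the vanishing statement.

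The genuine gap is the sentence ``if $\dim\Delta_\tau\geqslant 1$ the face function has at least two distinct terms''. What your argument actually proves is $\Vft\cap\orb[\tau]\neq\emptyset\Leftrightarrow\Card\left(\Supp(f)\cap\Delta_\tau\right)\geqslant 2$. This coincides with $\dim\Delta_\tau\geqslant 1$ when $\Delta_\tau$ is \emph{compact} (a compact face of dimension $\geqslant 1$ has at least two vertices, and every vertex of $\LNP[f]$ lies in $\Supp(f)$), but not in general: a non-compact face of dimension $\geqslant 1$ may carry a single support point. Concretely, take $X_\sigma=\C^3$ and $f=z+xy$, which is suitable and Newton-nondegenerate; the ray $\tau=\langle e_1\rangle$ belongs to $\Fan_f$, the face $\Delta_\tau=(0,0,1)+\R_+e_2+\R_+e_3$ is two-dimensional, yet $f_{\Delta_\tau}=z$ is a monomial and $\Vft\cap\orb[\tau]=\emptyset$. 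So your step fails as written, and indeed the statement needs either the hypothesis that $\Delta_\tau$ is compact (which is how it is used in Proposition \ref{prop orbits 1 and 2}, while the proof of Lemma \ref{lemma intersection exc strict} only needs the unconditional direction ``$\dim\Delta_\tau=0\Rightarrow$ empty intersection'', which you do prove correctly), or the convention that $\Delta_\tau$ is cut out of the compact Newton boundary as in Oka. Fix the statement or the hypothesis before asserting the equivalence in the non-compact case.
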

Furthermore,
\begin{lemma}\label{lemma mult orbit}(See \cite[Theorem 3.4]{Oka97}.) If $\tau=\langle v \rangle_{\R_+}$ for some primitive vector $v$ in $N$, then the multiplicity of $\tilde{f}$ along $\overline{\orb[\tau]}$ is equal to $h_v(f).$
\end{lemma}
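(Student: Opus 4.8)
\section*{Proof proposal}

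The plan is to compute the multiplicity of $\tilde f$ along the irreducible divisor $\overline{\orb[\tau]}$ after passing to a smooth toric chart. First I would reduce to the case where $\tau$ is a ray of a \emph{regular} cone: replacing $\Fan$ by a regular refinement not subdividing $\tau$ produces an isomorphism in a neighbourhood of the generic point of $\overline{\orb[\tau]}$, hence changes neither $\overline{\orb[\tau]}$ as a divisorial valuation on $\C(M)$ nor the multiplicity of $\tilde f$ along it. So I may assume there is a regular cone $\sigma'\in\Fan$ with $\tau\preceq\sigma'$ and $\sigma'\subset\sigma$. Completing $v=e_1$ to a $\Z$-basis $(e_1,e_2,e_3)$ of $N$, the chart $X_{\sigma'}$ is identified with $\C^3$ with coordinates $x_1,x_2,x_3$ (the character of the dual basis) so that $\overline{\orb[\tau]}\cap X_{\sigma'}=\{x_1=0\}$, and for $m\in\dual[\sigma']\cap M$ one has $\chi^m=x_1^{\langle m,e_1\rangle}x_2^{\langle m,e_2\rangle}x_3^{\langle m,e_3\rangle}$. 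Since $\sigma'\subset\sigma$ we have $\dual[\sigma]\subset\dual[\sigma']$, so each monomial appearing in $f$ is a genuine monomial in this chart.

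Writing $f=\sum_{m_i\in\Supp(f)}a_i\chi^{m_i}$, the pullback in the chart $X_{\sigma'}$ is $\tilde f=\sum_i a_i\,x_1^{\langle m_i,v\rangle}x_2^{\langle m_i,e_2\rangle}x_3^{\langle m_i,e_3\rangle}$. Grouping the terms by the exponent of $x_1$ gives $\tilde f=\sum_{k}x_1^{k}P_k(x_2,x_3)$, and the multiplicity of $\tilde f$ along $\{x_1=0\}$ is the least $k$ with $P_k\neq0$. The least exponent of $x_1$ occurring at all is $\min_i\langle m_i,v\rangle$, and I would check this equals $h_v(f)$: since $v\in\sigma$ we have $\langle w,v\rangle\geqslant0$ for every $w\in\dual[\sigma]$, so the minimum of the linear form $\langle\cdot,v\rangle$ over $\LNP[f]=\Conv(\Supp(f)+\dual[\sigma])$ is attained on $\Supp(f)$ itself; thus $h_v(f)=\min_{m_i\in\Supp(f)}\langle m_i,v\rangle$, a minimum of nonnegative integers, hence attained. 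In particular $\Delta_\tau\cap\Supp(f)\neq\emptyset$.

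It then remains to show $P_{h_v(f)}\neq0$, which is the only delicate point. By construction $P_{h_v(f)}=\sum_{m_i\in\Delta_\tau}a_i\,x_2^{\langle m_i,e_2\rangle}x_3^{\langle m_i,e_3\rangle}$. The assignment $m_i\mapsto(\langle m_i,e_2\rangle,\langle m_i,e_3\rangle)$ is injective on $\{m_i\in\Delta_\tau\}$: two such $m_i$ already share the value $\langle m_i,v\rangle=h_v(f)$ of the first coordinate in the basis $(e_1,e_2,e_3)$, so agreeing on the other two coordinates forces them to coincide. Hence $P_{h_v(f)}$ is a sum of pairwise distinct monomials, and at least one of the coefficients $a_i$ is nonzero because $\Delta_\tau$ meets $\Supp(f)$; therefore $P_{h_v(f)}\neq0$, and the multiplicity of $\tilde f$ along $\overline{\orb[\tau]}$ equals $h_v(f)$. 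The main obstacle is precisely this non-cancellation argument; the rest is bookkeeping around the reduction to a regular toric chart.
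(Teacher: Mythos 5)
Your proof is correct; the paper does not prove this lemma at all (it is quoted from Oka's book), and your argument is the standard one behind that reference: pass to a smooth chart in which the ray becomes a coordinate hyperplane, observe that $\operatorname{ord}_{\overline{O_\tau}}(\chi^m)=\langle m,v\rangle$ so the candidate order is $\min_{m\in\Supp(f)}\langle m,v\rangle=h_v(f)$, and rule out cancellation in the lowest-order coefficient by the injectivity of $m\mapsto(\langle m,e_2\rangle,\langle m,e_3\rangle)$ on the face $\Delta_\tau$. The one step worth flagging as automatic rather than an assumption is your reduction to a regular chart: a $1$-dimensional cone admits no proper subdivision, so $\tau$ survives in any regular refinement of $\Fan$, and the order along the corresponding divisor is a birational invariant of the associated divisorial valuation, exactly as you say.
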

%The following elementary lemma establishes then the final link between the regularity of $\widetilde{V(f)}\cap \orb[\tau]$ and the initial function $f$.
%
%\begin{lemma}
%Let $n\geqslant k$, and $\underline{X}^{\underline{m}}\in \C[X_1^{\pm 1},\cdots,X_n^{\pm 1}]$. Then the polynomial $f\in \C[X_1^{\pm 1},\cdots,X_k^{\pm 1}]$ defines a smooth hypersurface of $(\C^*)^k$ if and only if $\underline{X}^{\underline{m}}\cdot f$ defines a smooth hypersurface of $(\C^*)^n$.
%\end{lemma}

%This motivates the following definition, see also \cite[Definition 5]{Ste2}:
The following definitions describe a condition implying that the strict tranform $\Vft$ of $\Vf$ behaves in a nice way with respect to the modification associated to $\Fan$.

\begin{defn}\label{def suitable}
A function $f\in \C\{\dual[\sigma]\cap M\}$ is called \textbf{suitable} if $V(f)$ does not contain any $2$-dimensional orbit of $X_\sigma$, or equivalently, if $\Supp(f)$ has points in each $2$-dimensional face of $\dual[\sigma]$.
\end{defn}

\begin{defn}
For a face $\Delta$ of $\LNP[f]$, of any dimension, we define the truncation $f_{\Delta}$ of $f$ relatively to the face $\Delta$ as the function obtained by keeping only the terms of $f$ corresponding to points of this face: $$f_\Delta:=\sum\limits_{m_i\in \Delta} a_i\chi^{m_i}.$$
\end{defn}

\begin{defn}\label{def nnd}
\begin{itemize}
\item A suitable germ of function $f\in \C\{\dual[\sigma]\cap M\}$ is said to be \textbf{nondegenerate relatively to a compact face} $\Delta \subset \LNP[f]$ if and only if $V(f_{\Delta})$ is smooth in $(\C^*)^n$.
\item A suitable germ of function is said to be \textbf{Newton-nondegenerate}, or \textbf{NND} if it is nondegenerate \index{Newton-nondegenerate function} relatively to every compact face of its local Newton polyhedron.  
\end{itemize}
\end{defn}

\begin{prop}(See \cite[Theorem 3.4]{Oka97}.) Let $f\in \C\{\dual[\sigma]\cap M\}$ be a NND germ of analytic function on $(X_\sigma,0)$, and $\Fan$ a refinement of $\Fan_f$, such that $X_\Fan$ is smooth along $\orb[\tau]$ and $\Delta_\tau$ is compact. Then the intersection $\widetilde{V(f)}\cap \orb[\tau]$ is smooth, and at any point $p$ of this intersection, $\left(\widetilde{V(f)},p\right)$ is a germ of smooth hypersurface of $X_\Fan$ intersecting $\orb[\tau]$ transversally.
\end{prop}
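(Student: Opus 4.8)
The plan is to reduce the statement to a local computation in toric coordinates and then invoke Newton nondegeneracy relative to the compact face $\Delta_\tau$. First I would take a point $p \in \widetilde{V(f)} \cap \orb[\tau]$; by Lemma \ref{lemma orb int} such a point exists precisely because we are in the case $\dim(\Delta_\tau) \neq 0$ (indeed $\Delta_\tau$ is compact and if it were a point, $\widetilde{V(f)} \cap \orb[\tau]$ would be empty, so there would be nothing to prove). Since $X_\Fan$ is smooth along $\orb[\tau]$, Proposition \ref{prop orb comb}(4) tells us $\tau$ is a regular cone; choosing an affine chart $X_{\sigma'}$ of $X_\Fan$ with $\tau \preceq \sigma'$ and $\sigma'$ regular of dimension $3$, we get local coordinates $(z_1, z_2, z_3)$ on $X_{\sigma'} \simeq \C^3$ in which $\orb[\tau] = \{z_1 = \dots = z_{\dim \tau} = 0\}$ minus the other coordinate hyperplanes.

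Next I would write down the pullback $\tilde f = f \circ \Pi_\Fan$ in these coordinates. By the standard toric computation (the linearity of the height function on $\tau$ and the description of $\Fan_f$), after factoring out the monomial corresponding to the multiplicities along the exceptional components through $p$, the strict transform $\widetilde{V(f)}$ is cut out near $p$ by a function of the form $\tilde f_{\mathrm{str}} = \tilde f_\Delta + (\text{higher order})$, where $\tilde f_\Delta$ is (a monomial times) the pullback of the truncation $f_{\Delta_\tau}$, which depends only on the coordinates $z_i$ with $i > \dim \tau$ — that is, only on the coordinates on $\orb[\tau]$ together with the toric directions transverse to $\orb[\tau]$ but inside $\sigma'$, arranged so that $\tilde f_\Delta$ restricted to $\orb[\tau]$ is exactly $f_{\Delta_\tau}$ read in the torus $\orb[\tau] \simeq (\C^*)^{3 - \dim\tau}$. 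The key point: $p \in \orb[\tau]$, so at $p$ the relevant coordinates are nonzero, and the Newton-nondegeneracy hypothesis relative to the compact face $\Delta_\tau$ (Definition \ref{def nnd}) says precisely that $V(f_{\Delta_\tau})$ is smooth in the torus; hence the gradient of $\tilde f_\Delta$ along the $\orb[\tau]$-directions is nonzero at $p$, giving both smoothness of $\widetilde{V(f)} \cap \orb[\tau]$ at $p$ and smoothness of $\widetilde{V(f)}$ itself at $p$, and moreover showing the differential of $\tilde f_{\mathrm{str}}$ does not vanish on the tangent space to $\orb[\tau]$, which is exactly transversality of $\widetilde{V(f)}$ and $\orb[\tau]$ at $p$.

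The step I expect to be the main obstacle is bookkeeping the precise form of $\tilde f$ in the chart: one must verify that after dividing by the exceptional monomial, the leading part really is the pullback of $f_{\Delta_\tau}$ (up to a unit monomial) and that the remaining terms vanish to strictly higher order along the coordinate directions that are nonzero at $p$, so that the implicit function theorem applies and the transversality conclusion is not spoiled by the higher-order tail. This is essentially the content of \cite[Theorem 3.4]{Oka97}, and the argument there generalizes directly from $\C^n$ to $X_\sigma$ because the chart $X_{\sigma'}$ is regular, hence isomorphic to $\C^3$, and because $\Fan$ refines $\Fan_f$ guarantees $\Delta_\tau = \Delta_\gamma$ for $\gamma$ the minimal containing cone of $\tau$ in $\Fan_f$, so the height function is genuinely linear on $\tau$ and the truncation picked out is well defined. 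Once that local normal form is in hand, the smoothness and transversality statements are immediate from the nondegeneracy of $V(f_{\Delta_\tau})$ at the torus point $p$, and since $p$ was arbitrary in $\widetilde{V(f)} \cap \orb[\tau]$, the proposition follows.
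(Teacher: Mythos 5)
The paper does not actually prove this proposition: it is imported verbatim from Oka (\cite[Theorem 3.4]{Oka97}), so there is no internal proof to compare your argument against. That said, your sketch is in substance the standard argument behind Oka's theorem and is essentially sound: factor the exceptional monomial out of $\tilde f$ in a smooth toric chart; note that for $m_0\in\Delta_\tau$ the exponents $m-m_0$, $m\in\Delta_\tau$, lie in $\tau^{\perp}\cap M$, so the truncation divided by $\chi^{m_0}$ descends to a function on the quotient torus $\orb[\tau]$ and the higher-order tail vanishes on $\orb[\tau]$; then nondegeneracy along the compact face $\Delta_\tau$ gives a nonvanishing differential already in the $\orb[\tau]$-directions, which yields smoothness of $\widetilde{V(f)}\cap\orb[\tau]$, smoothness of $\widetilde{V(f)}$ at $p$, and transversality all at once.

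Two points deserve correction or expansion. First, you invoke a chart $X_{\sigma'}\simeq\C^3$ with $\tau\preceq\sigma'$ and $\sigma'$ regular of dimension $3$; the hypothesis only guarantees that $\tau$ itself is regular, and a $3$-dimensional cone of $\Fan$ containing $\tau$ as a face need not be regular (its singular locus simply misses $\orb[\tau]$). The computation should be carried out in the chart $X_\tau\simeq\C^{\dim\tau}\times(\C^*)^{3-\dim\tau}$, which is a smooth open neighbourhood of $\orb[\tau]=\{z_1=\cdots=z_{\dim\tau}=0\}$; the rest of your argument goes through there unchanged. Second, the passage from ``$V(f_{\Delta_\tau})$ is smooth in $(\C^*)^3$'' to ``the gradient in the $\orb[\tau]$-directions is nonzero'' is exactly the observation that $V(f_{\Delta_\tau})$ is invariant under the subtorus determined by $\tau$, hence is the preimage of a hypersurface of the quotient torus under a submersion; this is where compactness of $\Delta_\tau$ enters and it should be stated rather than left implicit.
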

%
%Denote $E:=\widetilde{V(f)}\cap E_\Fan$. Lemma \ref{preimage of origin} implies that $$E=\bigsqcup\limits_{\Delta_\tau ~ compact} \orb[\tau]\cap \widetilde{V(f)}.$$ Then $$\pi:={\Pi_\Fan}_{|\widetilde{V(f)}} \colon (\widetilde{V(f)},E)\rightarrow (V(f),0)$$ is a modification of the germ $(V(f),0)$, and the previous proposition implies
%
%\begin{cor}
%Let $f$ be a germ of NND function on $(X_\sigma,0)$ and $\Fan$ a refinement of $\Fan_f$ such that any cone $\tau\in \Fan$ such that $\Delta_\tau$ is compact and $dim(\Delta_\tau)\geqslant 1$ is regular. Then the modification $\pi\colon (\widetilde{V(f)},E)\rightarrow (V(f),0)$ is a resolution of the germ $(V(f),0)$.
%\end{cor}

Furthermore, the following will be heavily used in Section \ref{section construction graphe}:
\begin{prop}\label{prop orbits 1 and 2}(See \cite[Assertion 6.2.2 and Corollary 6.4.3]{Oka97}.)

Let $f$ be a Newton-nondegenerate function on $X_\sigma$, and $\Fan$ be a refinement of $\Fan_f$. Let $\tau$ be a cone of $\Fan$, such that $\Delta_\tau$ is compact. Then,
\begin{itemize}
\item If $\dim (\tau)=2$, denote $l(\Delta_\tau)$ the integral length of $\Delta_\tau$ in $M$. Then $$Card \left(\widetilde{V(f)}\cap \orb[\tau] \right)=l(\Delta_\tau).$$ 
\item If $\dim (\tau)=$, denote $i(\Delta_\tau)$ the number of points of $M$ in the \textbf{interior} of $\Delta_\tau$. Then $$C(\tau):=\widetilde{V(f)}\cap \overline{\orb[\tau]}$$ is a smooth curve. Furthermore, if $\dim(\Delta_\tau)=2$, this curve is irreducible and $$g(C(\tau))=i(\Delta_\tau).$$
If $\dim(\Delta_\tau)=1$, $C(\tau)$ is a disjoint union of $l(\Delta_\tau)$ smooth curves of genus $0$. 
\end{itemize}
\end{prop}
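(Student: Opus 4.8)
The plan is to read the two asserted cardinalities and genus formulas off the standard toric dictionary applied to the truncated functions, reducing everything to the case of a non-degenerate Laurent polynomial on a torus.

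First I would treat the case $\dim(\tau)=2$. The orbit $\orb[\tau]$ is isomorphic to $\C^*$, and by Lemma \ref{lemma orb int} the intersection $\Vft\cap \orb[\tau]$ is nonempty precisely because $\Delta_\tau$ is a compact face of positive dimension; since $\codim(\tau)=1$, necessarily $\dim(\Delta_\tau)=1$, i.e. $\Delta_\tau$ is a compact edge. In suitable local coordinates the equation of $\Vft$ restricted to $\orb[\tau]=\C^*$ is (up to a monomial unit) the one-variable truncation $f_{\Delta_\tau}$, whose Newton segment has integral length $l(\Delta_\tau)$. A non-degenerate Laurent polynomial in one variable of this shape has exactly $l(\Delta_\tau)$ simple roots in $\C^*$, whence $\Card(\Vft\cap\orb[\tau])=l(\Delta_\tau)$; the non-degeneracy hypothesis (Definition \ref{def nnd}) is exactly what guarantees the roots are simple and all lie in $\C^*$. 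This is the content of Oka's Assertion 6.2.2, which I would cite after setting up the local picture.

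Next, for $\dim(\tau)=1$, write $\tau=\langle v\rangle_{\R_+}$ with $v$ primitive, so $\overline{\orb[\tau]}$ is the (two-dimensional) closure of the divisor associated to $v$, a toric surface with dense torus $\orb[\tau]\simeq (\C^*)^2$. One sees that $C(\tau)=\Vft\cap\overline{\orb[\tau]}$ is the closure in this toric surface of the curve defined by the truncation $f_{\Delta_\tau}$, a non-degenerate Laurent polynomial in two variables whose Newton polygon is $\Delta_\tau$ (possibly a segment). Smoothness of $C(\tau)$ follows from Newton-non-degeneracy plus the fact that $X_\Fan$ is smooth along $\orb[\tau]$: the curve is smooth on the open torus by non-degeneracy relative to $\Delta_\tau$ and relative to the edges and vertices of $\Delta_\tau$ on the boundary strata, and transversality to the toric boundary handles the points at infinity. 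When $\dim(\Delta_\tau)=2$ the generic such curve is irreducible and the genus is computed by the classical Khovanskii formula, which for a smooth non-degenerate curve gives genus equal to the number $i(\Delta_\tau)$ of interior lattice points of $\Delta_\tau$; this is Oka's Corollary 6.4.3. When $\dim(\Delta_\tau)=1$ the truncation $f_{\Delta_\tau}$ is, after a monomial change of coordinates, a polynomial in one variable of degree $l(\Delta_\tau)$ times a monomial, so its zero locus in the torus is $l(\Delta_\tau)$ disjoint copies of $\C^*$, each compactifying to a rational curve, giving the stated disjoint union of $l(\Delta_\tau)$ genus-$0$ curves.

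The main obstacle is the careful bookkeeping at the boundary: I must check that passing from the open-torus picture (where non-degeneracy is a clean transversality statement) to the compactification $\overline{\orb[\tau]}$ does not create singularities of $C(\tau)$ or extra components, and that the smoothness hypothesis on $X_\Fan$ along $\orb[\tau]$ is used correctly. This amounts to verifying, face by face of $\Delta_\tau$, that the truncations $f_{\Delta'}$ for $\Delta'$ a face of $\Delta_\tau$ cut out smooth curves transverse to the corresponding orbit strata — precisely the full force of Newton-non-degeneracy. Once this is in place, the cardinality and genus counts are the standard lattice-point formulas, and I would simply invoke Oka's Assertions 6.2.2 and 6.4.3 for the detailed verification rather than reproduce it.
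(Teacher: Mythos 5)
Your sketch is correct and follows exactly the route the paper takes, namely deferring to Oka's Assertion 6.2.2 and Corollary 6.4.3: the paper gives no independent proof, and your reduction to the truncations $f_{\Delta_\tau}$ on the orbit tori, the root count for a one-variable non-degenerate Laurent polynomial, and the interior-lattice-point genus formula is precisely the standard argument behind those citations. The only small imprecision is that you invoke smoothness of $X_\Fan$ along $\orb[\tau]$, which is not a hypothesis of this proposition and is not needed: by Lemma \ref{lemma orb int} the curve $C(\tau)$ avoids the zero-dimensional orbits of the normal toric surface $\overline{\orb[\tau]}$, which is smooth away from those points.
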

\begin{rk}
Recall that, in both cases, if $\dim(\Delta_\tau)=0$, then $\widetilde{V(f)}\cap \overline{\orb[\tau]}=\emptyset$.
\end{rk}
Let us conclude this section with a lemma giving a precise meaning to the genericity of Newton-nondegenerate functions.

\begin{lemma}\label{NND generic}(See \cite[Chapter V, Section 3]{Oka97}.) Let $f$ be a holomorphic suitable function on $X_\sigma$. Then in the space of coefficients of all functions $h$ such that $\NP(h)=\NP(f)$, those which are non-degenerate
are Zariski-dense.
\end{lemma}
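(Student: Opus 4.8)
The plan is to reduce the statement to a classical genericity fact about hypersurfaces in tori with prescribed Newton polytope, and then check that non-degeneracy relative to \emph{all} compact faces is itself a Zariski-open condition on the coefficients. First I would fix the set of exponents $S:=\NP(f)\cap M$ — equivalently, the finite set of lattice points allowed to carry nonzero coefficients once we impose $\NP(h)=\NP(f)$ — and consider the affine space $\C^S$ of coefficient vectors. The subset where $\NP(h)=\NP(f)$ is itself Zariski-open (it is cut out by the non-vanishing of the finitely many coefficients at the vertices of $\NP(f)$, together with the constraint that nothing lies outside $\NP(f)$, which holds automatically for $h$ supported on $S$), so it suffices to show that inside this locus the non-degenerate functions form a Zariski-dense, in fact Zariski-open, subset.

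The key step is to treat one compact face $\Delta\subset\NP(f)$ at a time. For a fixed $\Delta$, consider the truncation map $\C^S\to \C^{\Delta\cap M}$, $h\mapsto h_\Delta$, which is a surjective linear map. The condition ``$V(h_\Delta)$ is smooth in $(\C^*)^n$'' depends only on $h_\Delta$, and it is a classical fact (Bernstein--Kushnirenko--Khovanskii theory, or a direct Bertini/Sard argument on the torus) that a generic Laurent polynomial with a given support defines a smooth hypersurface in $(\C^*)^n$: after translating $\Delta$ to the origin and working in the sublattice it spans, one checks that the ``universal'' incidence variety $\{(h_\Delta,x): h_\Delta(x)=0, \, dh_\Delta(x)=0 \text{ along the torus}\}$ projects to $\C^{\Delta\cap M}$ with image of positive codimension, because for each fixed $x\in(\C^*)^n$ the conditions $h_\Delta(x)=0$ and $x\cdot\nabla h_\Delta(x)$ proportional to the relevant weights are independent linear conditions on the coefficients — here one uses that $\Delta$ genuinely spans an affine subspace of the right dimension, so the monomials appearing are ``in general position'' enough. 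Hence the degenerate locus $Z_\Delta\subset\C^S$ (the preimage of the bad set under the truncation map) is a proper Zariski-closed subset.

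Then I would simply take the union: $f$ fails to be Newton-nondegenerate iff $h\in\bigcup_{\Delta} Z_\Delta$, the union over the finitely many compact faces $\Delta$ of $\NP(f)$. A finite union of proper Zariski-closed subsets of the irreducible variety $\C^S$ (or of the irreducible open subvariety where $\NP(h)=\NP(f)$) is again proper and Zariski-closed, so its complement — the set of non-degenerate functions with the prescribed Newton polyhedron — is Zariski-dense and open. This is exactly the assertion of the lemma, which is why the reference to \cite[Chapter V, Section 3]{Oka97} suffices; the argument there is essentially the one just sketched.

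The main obstacle is the per-face smoothness genericity, i.e. showing that $Z_\Delta$ is \emph{proper}: one must exhibit, for each compact face $\Delta$, at least one coefficient choice making $V(h_\Delta)$ smooth in the torus, and argue this is generic. The clean way is the incidence-variety dimension count above, being careful that ``smooth in $(\C^*)^n$'' for a polynomial whose support lies in a proper affine subspace really means smoothness of the hypersurface it cuts inside the subtorus direction together with freeness in the transverse directions — but since $\Delta$ is a face of a full-dimensional polyhedron and $f$ is suitable, the relevant dimensions work out, and one never divides by zero. Everything else (openness of $\NP(h)=\NP(f)$, finiteness of the face set, finite unions of closed sets) is routine.
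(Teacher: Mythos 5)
The paper does not prove this lemma; it only cites Oka's book, and the argument there is precisely the one you sketch (per compact face, an incidence-variety/Bertini dimension count showing the degenerate coefficient vectors lie in a proper closed subset, then a finite union over the compact faces). So your approach is the intended one and the core of it is sound. Two small imprecisions are worth fixing. First, $\NP(f)\cap M$ is \emph{not} finite: the local Newton polyhedron is the unbounded set $\Conv(\Supp(f)+\dual[\sigma])$, so the correct finite-dimensional parameter space is the set of coefficients attached to the lattice points of the \emph{compact} faces (plus the open condition that the vertices carry nonzero coefficients), which is all that non-degeneracy depends on. Second, the image of the incidence variety under projection to coefficient space is only constructible a priori, since $(\C^*)^n$ is not proper; this is harmless because a constructible set of positive codimension still has Zariski-dense complement, which is all the lemma claims, but you should not assert Zariski-closedness of the degenerate locus without a compactification argument. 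Finally, in the dimension count the conditions $h_\Delta(x)=0$ and $x_i\partial_i h_\Delta(x)=0$ are \emph{not} all independent (there is an Euler relation for each defining equation of the affine span of $\Delta$); the count still closes because the singular points of $V(h_\Delta)$ sweep out orbits of the $(n-\dim\Delta)$-dimensional stabilizer subtorus, and these two defects exactly cancel --- this is the point you gesture at in your last paragraph and it should be made explicit.
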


Finally, we will need a corollary of the Bernstein-Koushnirenko-Khovanskii Theorem (see \cite[Theorem 7.5.4]{CoxLitOsh}), in dimension $2$. We will need the following notations:

If $f\in \C[M]$ for some lattice $M$ of dimension $2$, denote $P(f):=\Conv(\Supp(f)).$ Denote by $V(P)$ the \textbf{lattice volume} of a polyhedron $P$, for which a simplex is of volume $1$. Finally, if $P_1,P_2$ are two polyhedra in $M$, denote $$V(P_1,P_2)=\dfrac{Vol(P_1+P_2)-Vol(P_1)-Vol(P_2)}{2},$$ called the \textbf{mixed volume} of $P_1$ and $P_2$.
\begin{lemma}\label{BKK dim 2}
Let $M$ be a $2$-dimensional lattice, and let $f\in \C[M]$ be a Newton-nondegenerate function. Denote $P_1:=P(f)$. Then for any convex polyhedron $P_2$ with vertices in $M$, a generic choice of coefficients for the elements of $P_2\cap M$ will provide a function $g$ such that $P(g)=P_2$ and, denoting $P=P_1+P_2$, the compactifications $\widetilde{V(f)}$ and $\widetilde{V(g)}$ of $V(f)$ and $V(g)$ in $X_{\Fan_P}$ intersect transversally only on $\orb[0]$, in $V(P_1,P_2)$ points.
\end{lemma}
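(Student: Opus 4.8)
The plan is to deduce Lemma \ref{BKK dim 2} from the Bernstein--Koushnirenko--Khovanskii (BKK) Theorem for the toric surface $X_{\Fan_P}$, where $P=P_1+P_2$. First I would set up the geometry: the fan $\Fan_P$ is the normal fan of $P$, so $X_{\Fan_P}$ is a complete toric surface in which both $\widetilde{V(f)}$ and $\widetilde{V(g)}$ are the closures of the affine curves $V(f),V(g)\subset(\C^*)^2=\orb[0]$. Since $P_1=P(f)$ is a summand of $P$ and $P_2=P(g)$ is (for generic $g$) the other summand, each of these compactifications is a curve whose class in $X_{\Fan_P}$ is the divisor class associated to the polytope $P_1$, resp. $P_2$; in particular their intersection number in $X_{\Fan_P}$ is exactly the mixed volume $V(P_1,P_2)$ by the standard toric intersection formula (Bézout in the toric setting).

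Next I would localize the intersection on the open orbit. The curve $\widetilde{V(f)}$ meets a one-dimensional orbit $\orb[\tau]$ (for $\tau$ a ray of $\Fan_P$) in $l(\Delta_\tau^{f})$ points by Lemma \ref{lemma orb int} and Proposition \ref{prop orbits 1 and 2}, where $\Delta_\tau^f$ is the face of $P_1$ selected by $\tau$, and similarly for $\widetilde{V(g)}$ with the face of $P_2$. On each ray $\tau$ the two curves hit $\orb[\tau]$ at points governed by the truncations $f_{\Delta_\tau^f}$ and $g_{\Delta_\tau^g}$, which are Laurent polynomials in one variable; a generic choice of the coefficients of $g$ — which by Lemma \ref{NND generic} also keeps $g$ Newton-nondegenerate with $P(g)=P_2$ — makes these two finite subsets of $\orb[\tau]\simeq\C^*$ disjoint, so the two closures do not meet on any $\orb[\tau]$, nor a fortiori at the torus-fixed points. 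Hence all $V(P_1,P_2)$ intersection points lie on $\orb[0]=(\C^*)^2$, and transversality there is exactly the content of the nondegeneracy of $f\cdot g$ (equivalently, of the mixed system $f=g=0$) on the torus, which for generic coefficients of $g$ follows from the smoothness/transversality part of the BKK setup; counted with multiplicity one, that gives the count $V(P_1,P_2)$.

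Concretely the steps are: (1) identify $X_{\Fan_P}$ and the divisor classes of $\widetilde{V(f)},\widetilde{V(g)}$ with the nef classes of $P_1,P_2$; (2) apply the toric Bézout/mixed-volume formula to get total intersection number $V(P_1,P_2)$; (3) use Lemma \ref{lemma orb int} and Proposition \ref{prop orbits 1 and 2} to describe where each curve meets the boundary divisors, and show a generic translation of the coefficients of $g$ separates these boundary intersection points while preserving $P(g)=P_2$ and Newton-nondegeneracy (via Lemma \ref{NND generic}); (4) conclude the intersection is entirely inside $\orb[0]$; (5) invoke BKK on the torus to get transversality and the point count there, matching the total $V(P_1,P_2)$ from step (2).

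The main obstacle I expect is step (3): making precise that a generic choice of coefficients for $P_2\cap M$ simultaneously (a) keeps $P(g)$ equal to all of $P_2$ (the vertices must not drop out), (b) keeps $g$ Newton-nondegenerate, and (c) moves the boundary intersection points of $\widetilde{V(g)}$ off those of the fixed curve $\widetilde{V(f)}$ on every one-dimensional orbit, all of these being Zariski-open conditions that must be shown nonempty. Condition (c) is the delicate one: on a ray $\tau$ the intersection points of $\widetilde{V(f)}$ with $\orb[\tau]$ are fixed, while those of $\widetilde{V(g)}$ are the roots of the truncation $g_{\Delta_\tau^g}$, whose coefficients vary; one must check this one-variable polynomial's root set genuinely varies (it does, since the edge of $P_2$ contributes at least two monomials), so avoiding the finitely many fixed points is a nonempty open condition, and intersecting finitely many such conditions over all rays together with the BKK-genericity on the torus still leaves a dense set. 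Once this bookkeeping is done, the rest is the standard toric dictionary plus BKK.
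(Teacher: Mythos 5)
Your proposal is correct and follows the same route as the paper, which presents this lemma precisely as a corollary of the Bernstein--Koushnirenko--Khovanskii theorem (citing \cite[Theorem 7.5.4]{CoxLitOsh}) without writing out the details. Your expansion --- toric B\'ezout for the total count $V(P_1,P_2)$, a genericity argument moving the roots of the edge truncations of $g$ off those of $f$ on each one-dimensional orbit, and BKK/Bertini transversality on the open torus --- is exactly the intended derivation, and your flagged step (3) is handled correctly since each relevant edge of $P_2$ carries at least two monomials.
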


\section{Construction of $\Gcdt$}\label{section construction graphe}
Let $(X,0)=(X_\sigma,0_{X_\sigma})$ be the germ of normal $3$-dimensional toric variety associated to a rational strongly convex cone $\sigma\subset N_\R$, where $N$ is a $3$-dimensional lattice. Let $\sigsing$ denote the union of non regular faces of $\sigma$, i.e. the union of faces $\tau$ of $\sigma$ corresponding to orbits $O_\tau$ along which $X$ is singular. 

Let $f \colon (\X,0)\to (\C,0)$ be a holomorphic function such that $\Supp(f)\cap \sigsing=\emptyset$. This guarantees that $\Vf := \{f=0\}\supset \Sing(X)$, and therefore that $f$ is a smoothing of $\Vf$. We also assume that $f$ is a Newton non degenerate function, in particular it is suitable, in the sense of Definitions \ref{def suitable} and \ref{def nnd}.

In this section, we explain how to compute the decorated graph $\Gcdt$ of Definition \ref{def gcdt}	 from the cone $\sigma$ and the Newton polyhedron $LNP(f)$ of $f$. 

We can already make the following remark:

%\begin{rk}
%If $\tau$ is a $2$-dimensional face of $\sigma$ which is not regular, then the condition $\Vf \supset O_\tau$ implies that there is a cone of dimension $1$ in $\Fan_f$ whose minimal containing cone is $\tau$.
%\end{rk}

\begin{rk}\label{rk orb dim 2 f}
Let $\tau$ be a face of $\sigma$ of dimension $2$. Then $\Vf\supset O_\tau$ iff there is a $1$-dimensional cone of $\Fan_f$ whose minimal containg face in $\sigma$ is $\tau$.

In particular, if $\tau$ is not regular, then the condition $\Vf \supset \Sing(X)$ implies that there is a cone of dimension $1$ in $\Fan_f$ whose minimal containing cone is $\tau$.
\end{rk}

\subsection{Universal companion polyhedron}

Although it is impossible to provide a function $g$ that will be a companion of every Newton-nondegenerate function $f$ on $X$, there exists an object which will be compatible with every function $f$.

\begin{defn}
For any sequence $G=(m_1,\cdots,m_k)$ generating the semigroup $\dual[\sigma]\cap M$, denote $$P(G):=Conv\left(\bigcup\limits_{1\leqslant i\leqslant k} m_i + \dual[\sigma]\right).$$
\end{defn}

In the sequel, we fix such a family $G$. 

By Remark \ref{plongement var torique affine}, the sequence $G$ leads to an embedding $(X,0)\hookrightarrow {\C^k}_{x_1,\cdots,x_k}$. From this viewpoint, $P(G)$ is in fact the local Newton polyhedron of the restriction to $X$ of a generic linear form of ${\C^k}_{x_1,\cdots,x_k}$.

Now, the fact that the restriction of a generic linear form is a companion of $f$ implies:

\begin{lemma}
For any germ of function $f$ on $(X,0)$, there is a function $g$ that is a companion of $f$ and such that $$LNP(g)=P(G).$$
\end{lemma}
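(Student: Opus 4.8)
The goal is to produce, for a given Newton-nondegenerate $f$ on $(X,0)$, a companion $g$ (in the sense of Definition \ref{companion}) whose local Newton polyhedron equals the fixed universal polyhedron $P(G)$. The plan is to invoke the genericity result already established in \cite[Lemma 4.4]{cur19.2}, namely that the restriction to $X$ of a \emph{generic} linear form $\ell$ on $\C^k$ is a companion of $f$, and to combine it with the identification made just above the statement: for the embedding $(X,0)\hookrightarrow \C^k_{x_1,\dots,x_k}$ induced by $G$, the restriction to $X$ of a generic linear form has local Newton polyhedron exactly $P(G)$.

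First I would make precise what ``generic'' means on each side. The companion condition (\ref{companion}) is, by \cite[Lemma 4.4]{cur19.2}, satisfied by all linear forms $\ell=\sum_i \lambda_i x_i$ whose coefficient vector $(\lambda_i)$ lies in a certain Zariski-dense (in particular nonempty, dense) subset $U_1\subset \C^k$, and this subset depends on $f$. On the other hand, the equality $\LNP[\ell|_X]=P(G)$ holds as soon as no $\lambda_i$ vanishes and no cancellation occurs among the leading terms: concretely, $\Supp(\ell|_X)$ contains each $m_i$, and since $P(G)=\Conv\bigl(\bigcup_i m_i+\dual[\sigma]\bigr)$ this forces $\LNP[\ell|_X]=P(G)$. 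The condition ``no $\lambda_i=0$'' already carves out a Zariski-dense open set $U_2\subset \C^k$; one must also check that the $m_i$ are not pushed off the boundary of $\LNP$ by interior cancellation, but since each $m_i$ appears with coefficient $\lambda_i$ and the $m_i$ are the chosen generators, as long as $\lambda_i\neq 0$ the monomial $\chi^{m_i}$ is present in $\ell|_X$, hence $m_i\in\Supp(\ell|_X)\subset \LNP[\ell|_X]$, and conversely $\Supp(\ell|_X)\subset\bigcup_i(m_i+\dual[\sigma]\cap M)$ since every monomial of $\ell|_X$ is one of the $\chi^{m_i}$. So on $U_2$ the two polyhedra coincide on the nose.

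The conclusion is then immediate: since $U_1$ and $U_2$ are both Zariski-dense (equivalently, nonempty Zariski-open) subsets of the irreducible variety $\C^k$, their intersection $U_1\cap U_2$ is nonempty. Pick any $(\lambda_i)\in U_1\cap U_2$ and set $g:=\ell|_X$ with $\ell=\sum_i\lambda_i x_i$. Then $g$ is a companion of $f$ by membership in $U_1$, and $\LNP[g]=P(G)$ by membership in $U_2$, which is what was claimed.

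\textbf{Main obstacle.} The only delicate point is the bookkeeping in the second step: one must be sure that the openness/density of the companion locus $U_1$ from \cite[Lemma 4.4]{cur19.2} is genuinely phrased in terms of the coefficients of the ambient linear form (so that it can be intersected with $U_2\subset\C^k$), rather than in some less tractable parameter space. Granting that — and the excerpt's phrasing ``the restriction to $X$ of a generic linear form on $(\C^N,0)$ is a companion of $f$'' does grant it — the argument is just the standard fact that two dense Zariski-open subsets of an irreducible variety meet. No estimate or computation beyond the monomial bookkeeping for $\Supp(\ell|_X)$ is needed.
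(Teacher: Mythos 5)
Your proposal is correct and follows essentially the same route as the paper, which derives the lemma directly from \cite[Lemma 4.4]{cur19.2} (genericity of the companion condition among linear forms on $\C^k$) combined with the observation that the restriction to $X$ of a linear form with all coefficients nonzero has local Newton polyhedron exactly $P(G)$; the intersection of the two generic loci is nonempty. Your extra bookkeeping on $\Supp(\ell|_X)=\{m_i:\lambda_i\neq 0\}$ only makes explicit what the paper leaves implicit.
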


From now on, fix $g \colon (X,0)\to(\C,0)$ a Newton non degenerate companion of $f$ such that $LNP(g)=P(G)$. 

Remark \ref{rk orb dim 2 f} has the following consequence for $g$:
\begin{rk}\label{rk orb dim 2 g}
There is no $1$-dimensional cone of $\Fan_g$ whose minimal containing face in $\sigma$ is $2$-dimensional.
\end{rk}

In the next subsections, we construct a fan $\Fant$ refining $\sigma$ and such that the associated modification $$\rt:= \Pi_\Fant \colon X_\Fant\rightarrow X$$ is adapted to the pair $(f,g)$ in the sense of Definition \ref{def adapted modif}. We will follow this construction on the following example: 

\begin{ex}\label{ex var}
Denote $M:=\Z^3$, and let $$u_1= \vect 012, u_2= \vect 010, u_3= \vect 11{-1}, u_4= \vect 100$$ be vectors in $\R^3$, and $\sigma=\langle u_1,u_2,u_3,u_4\rangle _{\R_+}$.

Let $X:=X_\sigma$ be the $3$-dimensional toric variety corresponding to $\sigma$. The cone $\sigma$ is not simplicial, hence $X$ is singular at the origin. Furthermore, the face $\tau_{1,2}:=\langle u_1,u_2\rangle_{\R_+}$ of $\sigma$ is singular, hence $X$ is singular along $0_{\tau_{1,2}}$.

The cone $\dual[\sigma]$ is generated by the vectors $u=\vect 101, v=\vect 011, w=\vect 100, x=\vect 02{-1}.$ The face of $\dual[\sigma]$ corresponding to the face $\tau_{1,2}$ of $\sigma$ is the ray generated by $W$. Furthermore, the semigroup $S_\sigma=\dual[\sigma]\cap M$ is generated by $u,v,w,x$ and $y=\vect 010$. 
%By abuse of notation, denote in the same way the corresponding elements of the algebra $\C[\dual[\sigma]\cap M]$. 
The relations between these vectors provide the description $$X_\sigma=Spec\left(\faktor{\C[u,v,w,x,y]}{(y^3-xv,ux-wy^2,vw-uy)}\right)$$

Figure \ref{fig:polgex2} shows the local Newton polyhedron of the restriction $g$ to $X$ of a generic linear form in $\C^5_{u,v,w,x,y}$. Full lines represent the cone $\sigma$, while dashed lines represent the axes of coordinates, left for clarity.

\begin{figure}[h]
\begin{center}

        \includegraphics[totalheight=8cm]{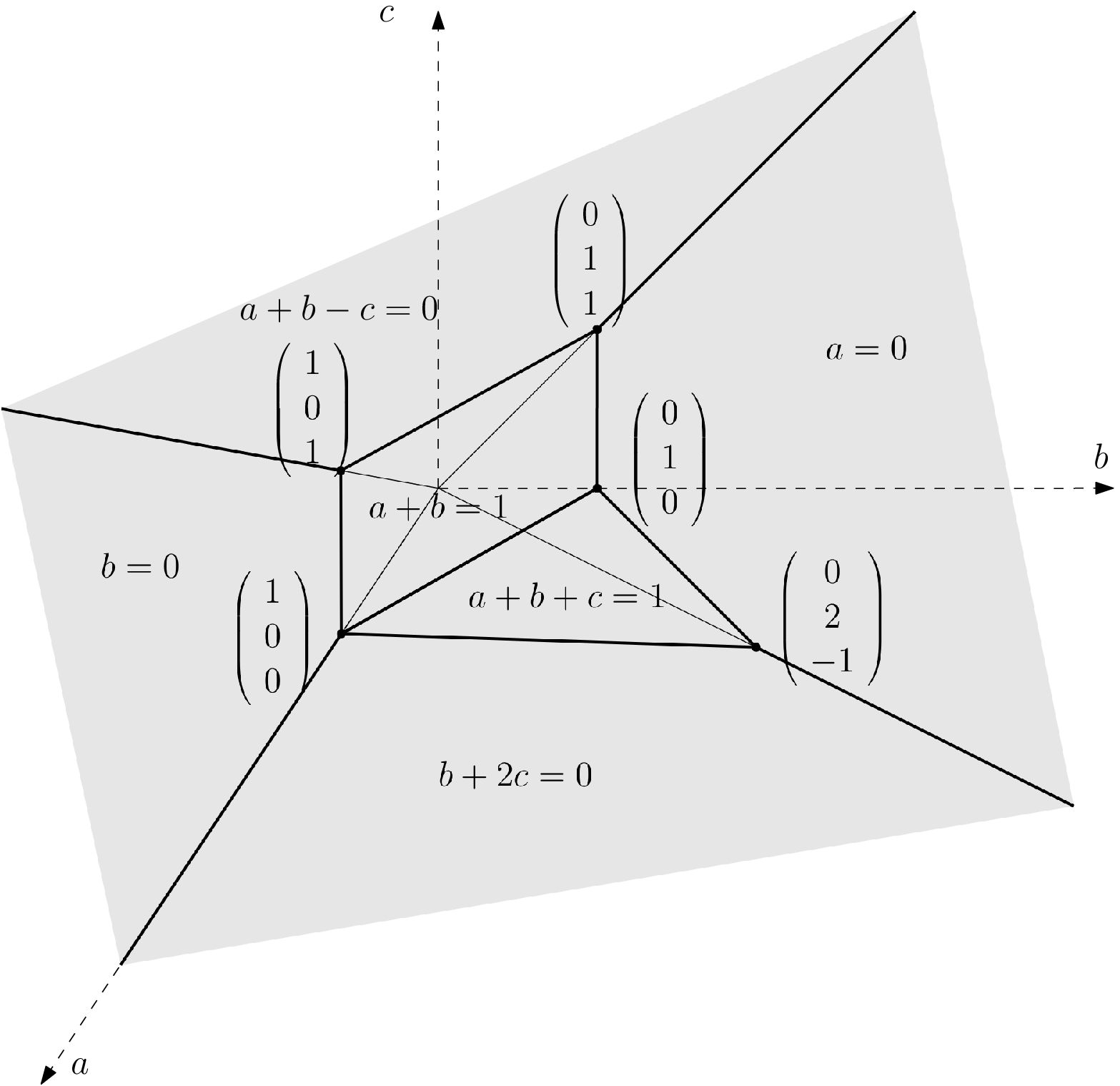}
       \caption{$LNP(g)$.}
    \label{fig:polgex2}
\end{center}
\end{figure}

Consider the function $f\in \C[\dual[\sigma]\cap M]$ given by $$f=\chi^{\left(\begin{smallmatrix}
1\\1\\2
\end{smallmatrix}\right)}+\chi^{\left(\begin{smallmatrix}
2\\0\\1
\end{smallmatrix}\right)}+\chi^{\left(\begin{smallmatrix}
0\\2\\0
\end{smallmatrix}\right)}+\chi^{\left(\begin{smallmatrix}
0\\4\\-2
\end{smallmatrix}\right)}=uv+uw+y^2+x^2.$$

Note that $\Vf\supset \Sing(X)$, since $f$ admits no multiple of $W$ in its support. Figure \ref{fig:polfex2} shows the local Newton polyhedron of $f$, where we kept again the coordinate axis and the cone $\dual[\sigma]$. Black round marks represent the elements of $\Supp(f)$, and white square marks represent the other points of $M$ in the compact faces of $LNP(f)$.

\begin{figure}[h]
\begin{center}

        \includegraphics[totalheight=10cm]{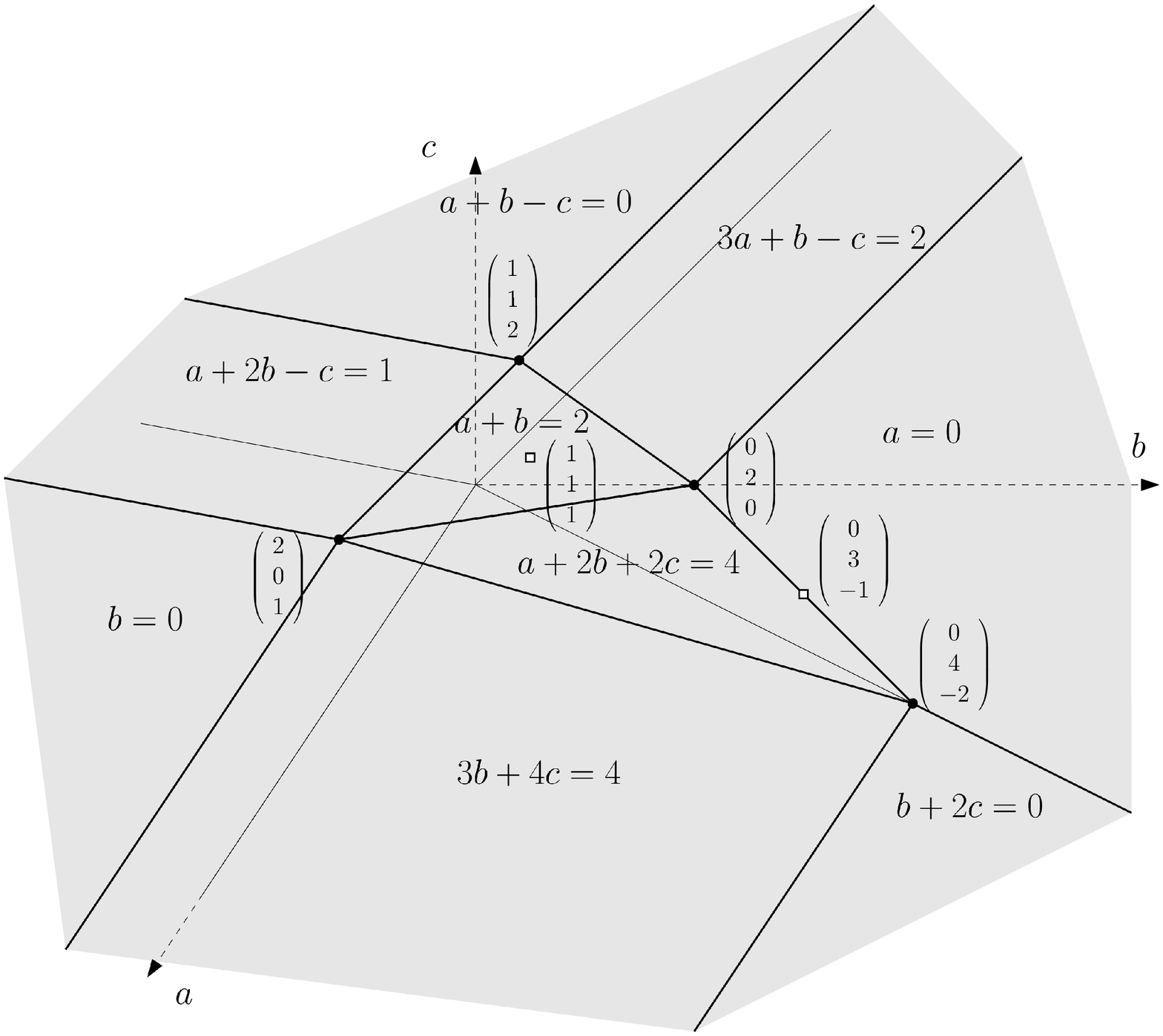}
       \caption{$LNP(f)$.}
    \label{fig:polfex2}
\end{center}
\end{figure}
\end{ex}
\subsection{The modification associated to the polyhedra}

Denote $\Fb:=\Fan_{f\cdot g}$ the fan associated to the germ $f\cdot g$. This fan is the minimal refinement of both $\Fan_f$ and $\Fan_g$. Denote $\Xb=X_{\Fb}$ and $$\rb:=\Pi_{\Fan_{f\cdot g}}\colon \Xb \rightarrow X$$ the modification associated to this refinement. 

Note that we have the following commutative diagram:
\begin{center}

\begin{tikzpicture}
\filldraw
(0,0) circle (0pt) node[] {$X$}
(-2,1) circle (0pt) node[] {$X_{\Fan_f}$}
(2,1) circle (0pt) node[] {$X_{\Fan_g}$}
(0,2) circle (0pt) node[] {$\Xb$}
(-1.4,1.6) circle(0pt) node[] {}
(1.4,0.3) circle(0pt) node[] {};
\path[-stealth]
(-2,0.7) edge node[below]{$\Pi_{\Fan_f}$} (-0.3,0.3)
(2,0.7) edge node[below]{$\Pi_{\Fan_g}$} (0.3,0.3)
(-0.3,1.7) edge node[above]{$\Pi_{\Fb,\Fan_f}$} (-2,1.3)
(0.3,1.7) edge node[above]{$\Pi_{\Fb,\Fan_g}$} (2,1.3)
(0,1.7) edge node[right] {$\rb$} (0,0.3);
\end{tikzpicture}
\end{center}

%Denote $\Dtb:= \rb^{-1}(\V)$, and in the same way, $\Vfb,\Vgb$ the strict transforms of $V(f),V(g)$ by $\rb$. Denote also $\Dfexb$ the union of components of $\Dtb$ whose images by $\rb$ are curves of $\Vf$, and $\Dob:=\rb^{-1}(0)$.
%
%\begin{discussion} {\bf (Reading rules.)}\label{disc reading}
%The irreducible components of $\Dob$ are the closures of orbits $O_\tau$ of $X_\Fb$ corresponding to $1$-dimensional cones $\tau$ of $\Fb$ whose minimal containing face in $\sigma$ is $\sigma$ itself. Such components are intersected by $\Vfb$ (resp. $\Vgb$) if and only if the minimal containing cone of $\tau$ in $\F_f$ (resp. $\F_g$) is of dimension $1$ or $2$.
%
%The components of $\Dfexb$ are the closures of orbits $O_\tau$ of $X_\Fb$ corresponding to $1$-dimensional cones $\tau$ of $\Fb$ whose minimal containing face in $\sigma$ is of dimension $2$. 
%
%\end{discussion}

\begin{ex}
The fact that the fan $\Fan_{f \cdot g}$ is the minimal refinement of both $\Fan_f$ and $\Fan_g$ implies that it can be computed by ``superposing'' the two fans $\Fan_f$ and $\Fan_g$, as in figure \ref{fig:sigmarafpolsnb}. This figure is to be understood as the cone over the plane figure. In this figure, thick lines indicate the $2$-dimensional cones which are contained in $2$-dimensional cones of $\Fan_f$, dashed lines indicate the $2$-dimensional cones which are contained in $2$-dimensional cones of $\Fan_g$, so the thick dashed line indicates a $2$-dimensional cone which is in both. 

By Lemma \ref{lemma orb int}, white marks correspond to $2$-dimensional orbits of $X_\Fb$ whose closures are intersected by the strict transform of $\Vf$ by $\rb$. Proposition \ref{prop modif refinement} implies that square marks correspond to $2$-dimensional orbits of $X_\Fb$ whose the closures are sent by $\rb$ on curves in $\Vf$. Remarks \ref{rk orb dim 2 f} and \ref{rk orb dim 2 g} imply that any $1$-dimensional cone whose minimal containing face in $\sigma$ is $2$-dimensional corresponds to such an orbit.

%Hollow square marks correspond to $2$-dimensional orbits of $X_\Fb$ whose closures are irreducible components of $\Dfexb$. Remarks \ref{rk orb dim 2 f} and \ref{rk orb dim 2 g} imply in fact that any $1$-dimensional cone whose minimal containing face in $\sigma$ is $2$-dimensional corresponds to a component of $\Dfexb$.
\begin{figure}[h]
\begin{center}

        \includegraphics[totalheight=8cm]{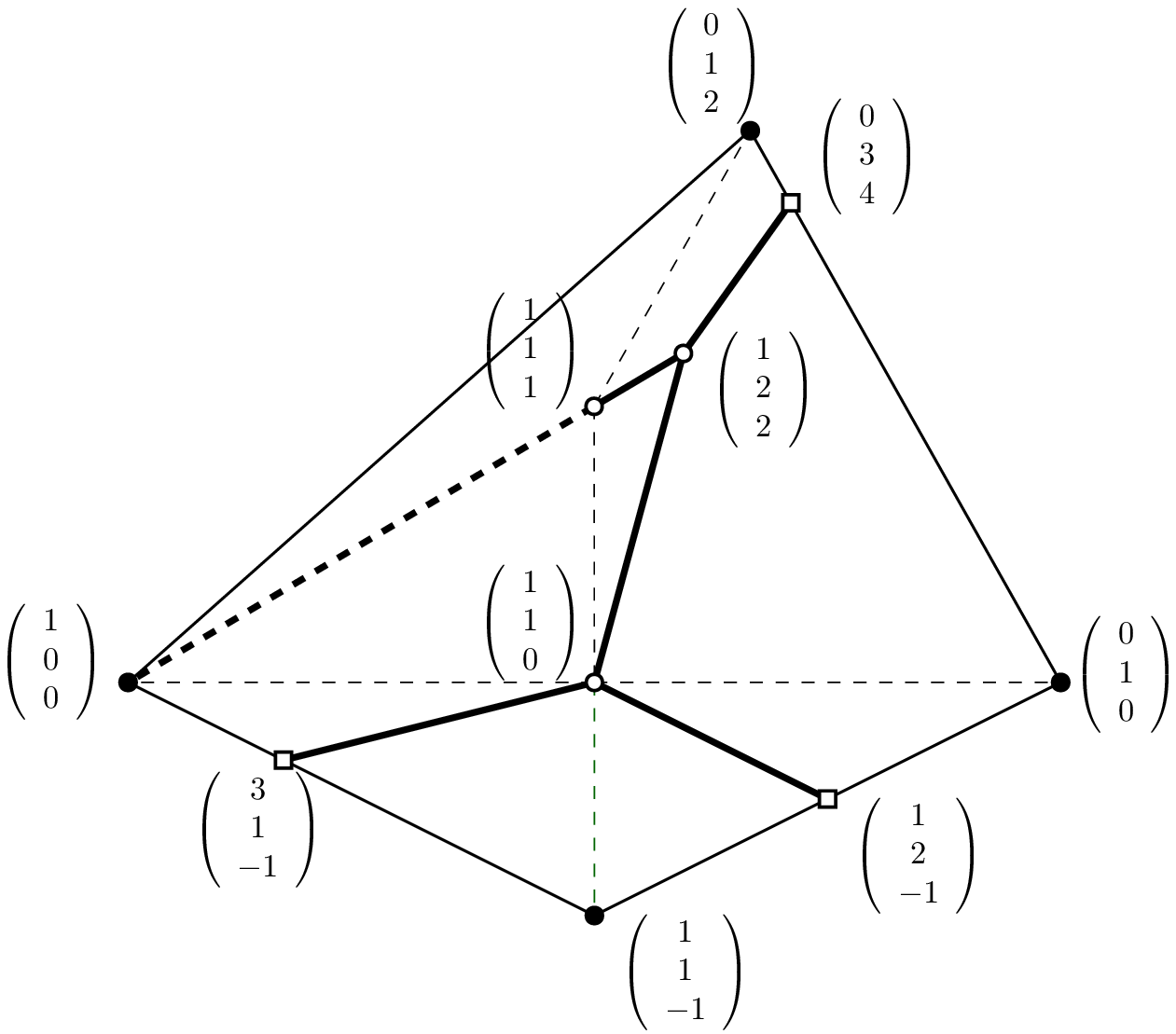}
       \caption{The fan $\Fb$.}
    \label{fig:sigmarafpolsnb}
\end{center}
\end{figure}
\end{ex}

At this step, we do not yet have a modification of $X_\sigma$ respecting all the conditions of Definition \ref{def adapted modif} of an adapted modification. We still need to refine some cones of $\Fb$.

\subsection{First refinement of $\Fb$}
We obtain a new fan $\Fh$ by, first, refining regularly the $2$-dimensional cones of $\Fb$ which are contained in $2$-dimensional cones of $\F_f$. This is done in a canonical way, see for example \cite[Lemma 3.3]{Oka87} or \cite{Pop07}. These cones correspond to $1$-dimensional orbits of $X_\Fb$ which are intersected by the strict tranform of $\Vf$ by $\rb$. Then, one needs to add new $2$-dimensional cones in a non-canonical way in order to get a collection of cones respecting the definition of a fan, see Definition \ref{def fan}. One may have to add new cutting cones at this step.

Denote by $\Fh$ the fan obtained at the end of this process.

\begin{ex}
In our example, these $2$-dimensional cones of $\Fb$ are already regular, so the fan $\Fh$ is the same as $\Fb$.
\end{ex}

%By analogy with the previous subsection, we introduce the notations $\Doh,\Vfh,\Vgh,\Dfexh$, and $\rh \colon X_\Fh \to X_\sigma$.
\subsection{The fan $\Fant$}
%Here again, the components of $\Doh,\Vfh,\Vgh,\Dfexh$ can be read on $\Fh$ as in Discussion \ref{disc reading}, and if $\gamma$ is a cutting cone of $\Fh$, then $\overline{O_\gamma}$ is an irreducible curve of $\Dfexh \cap \Doh$. This is what motivates the final refinement of $\Fh$.

\begin{defn}
We call \textbf{cutting cone} of a fan $\Fan$ with support $\sigma$ any $2$-dimensional cone $\gamma$ of $\Fan$ with $1$-dimensional faces $\tau_1,\tau_2$ so that 
\begin{enumerate}
\item the minimal containing face of $\tau_1$ in $\sigma$ is $\sigma$,

\item and the minimal containing face of $\tau_2$ in $\sigma$ is $2$-dimensional.
\end{enumerate}
\end{defn}

%If $\gamma$ is a cutting cone of $\Fb$, then $\overline{O_\gamma}$ is an irreducible curve of $\Dfexb \cap \Dob$. The denomination \emph{cutting cone} is chosen in reference to the so called ``cutting edges'' introduced in \cite[Definition 7.2.2]{NemSzi12}. 

Denote $\Fant$ a fan obtained by refining the $3$-dimensional cones of $\Fh$ having a cutting cone as a face. 
%These cones correspond to points of $X_\Fh$ which are on curves of $\Dfexh \cap \Doh$, whence the need for $X_\Fant$ to be smooth here. 

There is no canonical way to refine regularly a $3$-dimensional cone, but a first necessary step is the refining of its $2$-dimensional faces. For a description of the general process, see \cite[Paragraph 3]{Oka87}.

Our construction ensures the following

\begin{lemma}
The modification $\rx := \Pi_\Fant \colon \Xt:= X_\Fant \to X$ is adapted to the pair $(f,g)$, in the sense of Definition \ref{def adapted modif}.
\end{lemma}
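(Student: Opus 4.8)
The statement to establish is that the modification $\rx := \Pi_\Fant \colon \Xt \to X$ is adapted to the pair $(f,g)$ in the sense of Definition \ref{def adapted modif}, i.e.\ (i) $\rx$ is an isomorphism outside $(\Vf\setminus \Vg)\cup\{0\}$, and (ii) the total transform $\Dt = \rx^{-1}(\V)$ is a simple normal crossings divisor at every point of $\Courbest$. The plan is to verify these two conditions in turn, using the combinatorial dictionary of Propositions \ref{prop orb comb}, \ref{prop modif refinement} and \ref{prop orbits 1 and 2} together with the Newton-nondegeneracy of $f$ and of the companion $g$.

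\textbf{Condition (1): the locus of non-triviality.} First I would recall that by Proposition \ref{prop modif refinement}, $\rx$ is an isomorphism over the complement of the image of the exceptional orbits, and that a toric morphism coming from a refinement of $\sigma$ is an isomorphism over the open orbit $O_{\{0\}}=\Tau_N$ and over all orbits $O_\tau$ with $\tau$ a face of $\sigma$ that is not subdivided. The only faces of $\sigma$ that get subdivided in passing to $\Fant$ are: (a) $\sigma$ itself (whose orbit is $\{0\}$), and (b) the $2$-dimensional faces $\tau$ of $\sigma$ for which $\Fan_f$ (equivalently $\Fb$, $\Fh$, $\Fant$) has a $1$-dimensional cone in their interior. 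By Remark \ref{rk orb dim 2 f}, such a $2$-face $\tau$ has $O_\tau\subset \Vf$; and by construction $g=$ restriction of a generic linear form, so $\Vg$ does not contain any $2$-dimensional orbit (Remark \ref{rk orb dim 2 g}), hence $O_\tau\not\subset\Vg$, so $O_\tau\subset \Vf\setminus\Vg$. The $1$-dimensional faces of $\sigma$ are never subdivided (a ray cannot be refined), and they are also left untouched by the superposition of $\Fan_f$ and $\Fan_g$. Thus the locus over which $\rx$ fails to be an isomorphism is contained in $\overline{O_\sigma}\cup\bigcup_\tau \overline{O_\tau} \subset \{0\}\cup(\Vf\setminus\Vg)$ (using that $\overline{O_\tau}$ for a $2$-face $\tau$ with $O_\tau\subset\Vf\setminus\Vg$ differs from $\Vf\setminus\Vg$ only in lower-dimensional strata sent into $\Vf$), which gives Condition \ref{modif locus}.

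\textbf{Condition (2): SNC at points of $\Courbest$.} This is the main obstacle, and it is precisely what the three successive refinements ($\Fb\rightsquigarrow\Fh\rightsquigarrow\Fant$) were designed to handle. I would argue locally near each point $p\in\Courbest$. The components of $\Dt$ through $p$ can be of three kinds: components of $\Do$ (closures of orbits $\overline{O_\tau}$ with $\tau$ a new ray in the interior of $\sigma$), the component $\Dfex$ (closures of orbits over curves of $\Vf$, i.e.\ over $2$-faces of $\sigma$ contained in $\Vf$), and the strict transforms $\Vft$, $\Vgt$. For the toric components among these, normal crossings at $p$ is equivalent to $X_\Fant$ being smooth at $p$ together with the cones involved being faces of a common cone — this is what the \emph{regular} refinement of the $2$-dimensional cones of $\Fb$ contained in $2$-faces of $\Fan_f$ (Section on $\Fh$) and the regular refinement of the $3$-dimensional cones of $\Fh$ having a cutting cone as a face (Section on $\Fant$) guarantee, at least at all points lying on $\Courbest$; one invokes \cite[Lemma 3.3]{Oka87} / \cite{Pop07} for the $2$-dimensional canonical regular subdivision and \cite[Paragraph 3]{Oka87} for the $3$-dimensional one. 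For the interaction with the strict transforms $\Vft$ and $\Vgt$, I would use the Newton-nondegeneracy: by the Proposition following Definition \ref{def nnd}, at every point of $\Vft\cap O_\tau$ with $\Delta_\tau$ compact and $X_\Fant$ smooth along $O_\tau$, $\Vft$ is smooth and meets $O_\tau$ transversally, and similarly for $\Vgt$; and by Lemma \ref{BKK dim 2} (applied to the $2$-dimensional toric strata, using that $g$ has the universal companion polyhedron $P(G)$) the strict transforms $\Vft$ and $\Vgt$ meet transversally, and only along $0$-dimensional orbits. Combining these, at each $p\in\Courbest$ one checks that the (at most three) local components of $\Dt$ — one or two toric divisors plus possibly $\Vft$ and/or $\Vgt$ — form a normal crossings configuration in the smooth ambient germ $(X_\Fant,p)$; the only place one must be careful is a point lying simultaneously on $\Vft$, $\Vgt$ and a toric divisor, which by Lemma \ref{BKK dim 2} forces that toric divisor to be $\overline{O_0}$ for a $3$-dimensional cone $0$, i.e.\ a regular situation. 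This establishes Condition \ref{sncd}, completing the proof.

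\textbf{Where the difficulty concentrates.} I expect the genuinely delicate point to be not the smoothness of $X_\Fant$ (which is purely combinatorial and handled by the cited subdivision lemmas) but the verification that \emph{after} introducing the non-canonical cutting cones and the non-canonical $2$-dimensional cones needed to complete $\Fb$ and $\Fh$ into honest fans, the strict transforms $\Vft$ and $\Vgt$ still meet the exceptional divisor and each other transversally at points of $\Courbest$ — i.e.\ that the freedom in these non-canonical choices does not destroy the normal crossings property along $\Courbest$. The resolution of this is that the new cones are only ever added in regions of $\sigma$ \emph{not} meeting the relevant Newton data ($\Delta_\tau$ being $0$-dimensional there, so by Lemma \ref{lemma orb int} and Proposition \ref{prop orbits 1 and 2} the corresponding orbits are disjoint from $\Vft$, hence from $\Courbest$), so that $\Courbest$ is unaffected by these choices; making that statement precise, by tracking which cones of $\Fant$ actually support points of $\Courbest$, is the core of the argument.
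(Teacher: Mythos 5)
The paper does not actually write out a proof of this lemma: it is stated with the preamble ``Our construction ensures the following'' and no proof environment, the justification being distributed implicitly over the construction of $\Fb$, $\Fh$ and $\Fant$ and the toric dictionary of Section \ref{section toric}. Your proposal is therefore a reconstruction rather than an alternative route, and its overall architecture is the right one: Condition \ref{modif locus} follows from identifying the subdivided faces of $\sigma$ (only $\sigma$ itself and the $2$-faces carrying a ray of $\Fan_f$ in their interior, which by Remarks \ref{rk orb dim 2 f} and \ref{rk orb dim 2 g} have $O_\tau\subset \Vf\setminus\Vg$ as germs), and Condition \ref{sncd} follows from the regular subdivisions near $\Courbest$ together with the Newton-nondegeneracy of $f$ and the genericity of $g$ via Lemma \ref{BKK dim 2}. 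This matches what the construction is designed to ensure.

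Two points in your write-up deserve correction. First, your closing claim that the non-canonical cones ``are only ever added in regions of $\sigma$ not meeting the relevant Newton data, so that $\Courbest$ is unaffected by these choices'' is not right as stated: a new cutting cone $\gamma$ added when completing $\Fb$ or $\Fh$ into a fan produces a new exceptional curve $\overline{O_\gamma}\subset \Dfex\cap\Do$, which \emph{is} a component of $\Courbest$ (and indeed $\Courbest$ and $\Gcdt$ genuinely depend on these choices). The correct statement is weaker but sufficient: since $\Delta_\gamma(f)$ is $0$-dimensional for these cones, Lemma \ref{lemma orb int} shows that $\Vft$ (and likewise $\Vgt$) avoids $\overline{O_\gamma}$, so the SNC condition along these new curves involves only toric divisors and is settled by the regularity of the $3$-dimensional cones adjacent to $\gamma$ arranged in the passage to $\Fant$. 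Second, your sentence invoking Lemma \ref{BKK dim 2} at a point of $\Vft\cap\Vgt$ (``forces that toric divisor to be $\overline{O_0}$ for a $3$-dimensional cone'') is garbled: the orbit of a $3$-dimensional cone is a point, not a divisor. What the lemma actually gives is that $\Vft\cap\Vgt$ meets each surface $\overline{O_\tau}$ ($\tau$ a ray) only inside the open orbit $O_\tau$, away from all lower-dimensional orbits, so that exactly one toric divisor passes through such a point and the local configuration is the SNC triple $\overline{O_\tau},\Vft,\Vgt$. With these two repairs your argument is sound and fills in the proof the paper leaves implicit.
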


\begin{ex}
Figure \ref{fig:sigmarafnb} shows a possible $\Fant$ for our example. The new cutting cones are represented by continuous thin lines. We erased one end of the corresponding segments on the drawing in accordance with what will be explained in the next subsection, see Lemma \ref{lemma intersection exc strict}. The other new $2$-dimensional cone is represented with a dotted line.

\begin{figure}[h]
\begin{center}
        \includegraphics[totalheight=8cm]{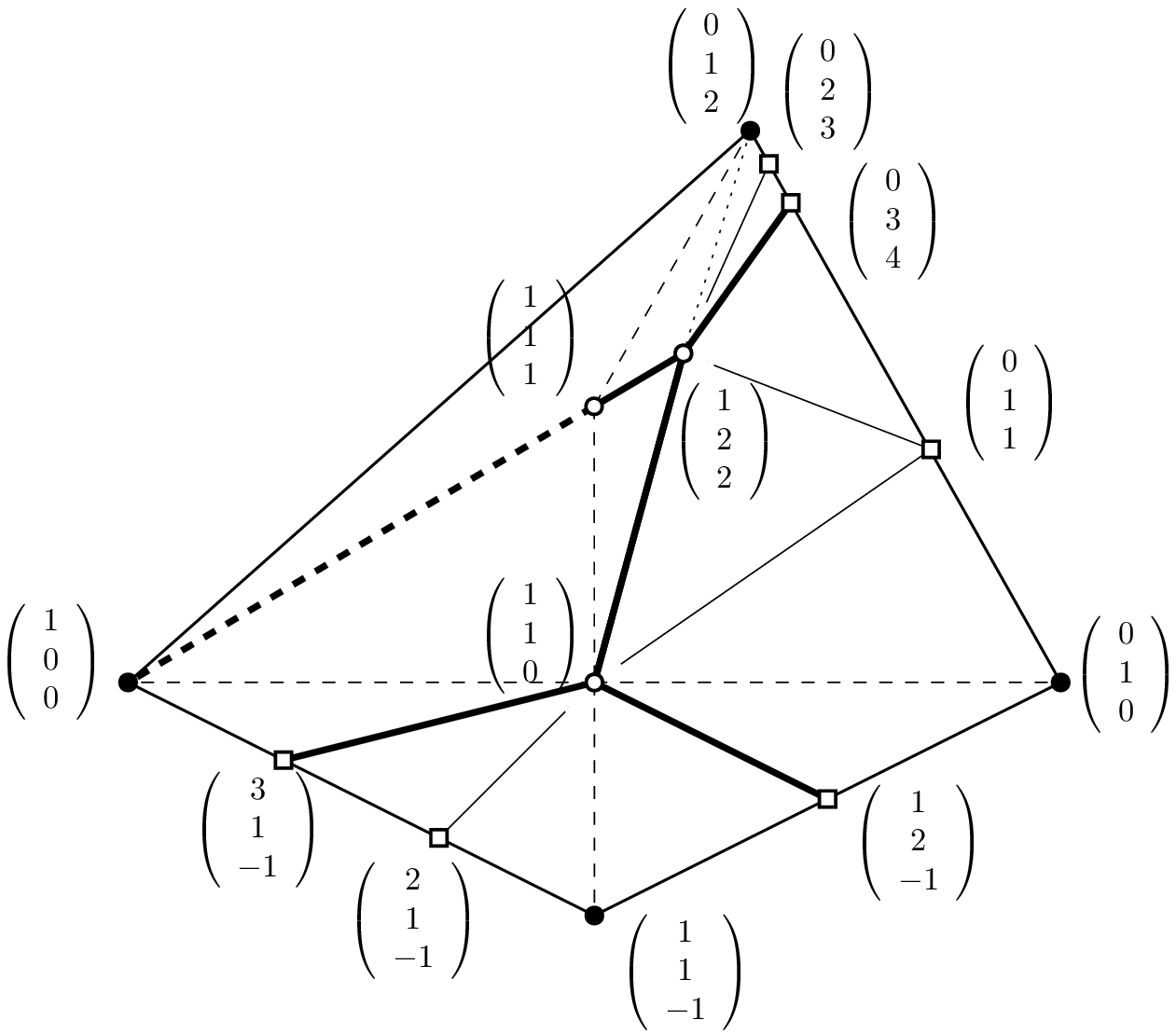}
       \caption{The fan $\Fant$.}
    \label{fig:sigmarafnb}
\end{center}
\end{figure}
\end{ex}

\subsection{Reading $\Gcdt$}

Let us explain now how the decorated configuration $\Courbest$ can be read from the fan $\Fant$, together with $\LNP[f]$ and $\LNP[g]$.

Recall that $\Courbest= \left(\Df\cap \Do\right)\cup \left(\Df\cap \Vgt\right)$ can be decomposed as $$\left(\Dfex\cap \Do\right)\cup \left(\Vft\cap \Do\right) \cup \left(\Vft\cap \Vgt\right) \cup \left(\Dfex\cap \Vgt\right).$$

Since $\Fan_g$ contains no cutting cone, the last intersection is empty. There are therefore three types of curves to be read.

We introduce the following notations:
\begin{defn}For $\tau\in \Fant$, denote repectively $\Delta_\tau(f)$ and $\Delta_\tau(g)$ the corresponding faces of $LNP(f)$ and $LNP(g)$, as in Subsection \ref{subs modif ass fct}.

To make short, if the dimension of those faces is $1$ or less, denote $$l_\tau(f):=l(\Delta_\tau(f)) \text{ and }l_\tau(g):=l(\Delta_\tau(g))$$ with the convention that the length of a $0$-dimensional face is $0$.

If $dim(\Delta_\tau(f))\leqslant 2$, denote $i_\tau$ the number of interior points of $\Delta_\tau(f)$, with the convention that the number of interior points of a face of dimension $0$ or $1$ is $0$.

Finally, a cone $\tau$ in the interior of $\sigma$ is called \textbf{pertinent} if $dim(\Delta_\tau(f))\geqslant 1$, that is, if $\Vft \cap O_\tau \neq \emptyset$. 
\end{defn}

\begin{prop}{\bf (Two types of compact curves.)}
\hfill
\begin{enumerate}
\item \label{point exc curves} \textbf{The first type is made of curves in $\Dfex\cap \Do$}, called \textbf{exceptional curves}. Such curves are of the form $\overline{O_\gamma}$, where $\gamma$ is a cutting cone of $\Fant$.

Two such curves intersect if and only if the corresponding cones are faces of a same $3$-dimensional cone of $\Fant$. The nature of their intersection point can be read on the fan: it is $\plus$ if and only if they are in the same component of $\Dfex$. 

The multiplicity decorations are obtained in the following way: let $\gamma$ be a cutting cone, and $\tau_1, \tau_2$ be its $1$-dimensional faces, with $\tau_2$ in the interior of $\sigma$. Then $\overline{O_\gamma}$ has multiplicity decoration $(h_{\tau_1}(f);h_{\tau_2}(f),h_{\tau_2}(g))$. Each exceptional curve is \textbf{rational}.

\item \label{point str transf curves}\textbf{The second type is made of curves of $\Vft\cap \Do$}, called \textbf{strict tranform curves}. They correspond to $1$-dimensional pertinent cones $\tau$ of $\Fant$.

Furthermore, in these conditions, the intersection $\overline{O_\tau}\cap \Vft$ is 
\begin{itemize}
\item an irreducible curve $C_\tau$ of genus $i_\tau$ if $dim(\Delta_\tau(f))=2$.
\item a disjoint union $C_\tau={C_\tau}^1\bigsqcup \cdots \bigsqcup {C_\tau}^{l_\tau(f)}$ of $l_\tau(f)$ irreducible rational curves if $dim(\Delta_\tau(f))=1$.
\end{itemize}

Each connected component of $C_\tau$ has multiplicity decoration $(1;h_\tau(f),h_\tau(g))$.
%By construction of $\Fant$ and the non-degeneracy of $f$, these curves are smooth, $\Xt$ is smooth along each of them, and they intersect transversally.

Furthermore, let $\tau_1, \tau_2$ be two $1$-dimensional such cones. The possibly disconnected curves $C_{\tau_1}$ and $C_{\tau_2}$ intersect if and only if $\tau_1$ and $\tau_2$ are faces of the same $2$-dimensional pertinent cone $\gamma$. In these conditions, $$Card(C_{\tau_1}\cap C_{\tau_2})=l_\gamma(f)$$ and these intersection points are all of type $\plus$.

In this situation, if $dim(\tau_1)=1$, then $l_{\tau_1}(f)=l_\gamma(f)$, and each connected component of $C_{\tau_1}$ intersects $C_{\tau_2}$ in exactly one point.
\end{enumerate}

\end{prop}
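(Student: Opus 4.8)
The plan is to prove each assertion by translating the statement about curves into the combinatorics of the fan $\Fant$, using the dictionary between cones and orbit closures (Proposition \ref{prop orb comb}, Proposition \ref{prop modif refinement}) together with the multiplicity and intersection data computed by Oka's results (Lemmas \ref{lemma orb int}, \ref{lemma mult orbit}, Propositions on NND functions and \ref{prop orbits 1 and 2}).

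\textbf{First type (exceptional curves).} I would start from the observation that a component of $\Dfex$ is the closure of a $1$-dimensional orbit $\overline{O_\tau}$ with $\tau$ a $1$-dimensional cone whose minimal containing face in $\sigma$ is $2$-dimensional (this is exactly Remark \ref{rk orb dim 2 f}, and by Proposition \ref{prop modif refinement} such an $\overline{O_\tau}$ is sent onto a curve of $\Vf$); while a component of $\Do$ is the closure of an orbit sent to the origin, corresponding to a cone in the interior of $\sigma$. Hence an irreducible component $C$ of $\Dfex\cap\Do$ is $\overline{O_\gamma}$ for $\gamma$ a $2$-dimensional cone with one face of each kind — that is, a cutting cone. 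Conversely every cutting cone yields such a curve. The intersection criterion then follows from Proposition \ref{prop orb comb}\ref{point orb closure}: $\overline{O_{\gamma_1}}\cap\overline{O_{\gamma_2}}=\overline{O_\tau}$ for $\tau$ the smallest cone having both $\gamma_i$ as faces, which is nonempty and $0$-dimensional precisely when $\gamma_1,\gamma_2$ are two faces of a common $3$-dimensional cone. The sign decoration is the definition in Definition \ref{def gcdt} unwound: both components of $\Dfex$ meeting at the point (sign $\minus$) versus one (sign $\plus$); I must check that an exceptional curve lies in a unique component of $\Dfex$, which follows because the component of $\Dfex$ containing $\overline{O_\gamma}$ is $\overline{O_{\tau_1}}$ for $\tau_1$ the face of $\gamma$ that is non-interior, and that face is unique by definition of a cutting cone. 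The multiplicity decoration $(h_{\tau_1}(f);h_{\tau_2}(f),h_{\tau_2}(g))$ is exactly Lemma \ref{lemma mult orbit} applied to the components of $\Dt$ meeting $\overline{O_\gamma}$: the triple $(m_1;m_2,n_2)$ of Definition \ref{def gcdt} records the $f$-multiplicity of the $\Df$-component $\overline{O_{\tau_1}}$ and the $f$- and $g$-multiplicities of the $\Do$-component $\overline{O_{\tau_2}}$. Rationality of $\overline{O_\gamma}$: $\gamma$ is $2$-dimensional in a regular refinement, so $\overline{O_\gamma}\cong\mathbb{P}^1$.

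\textbf{Second type (strict transform curves).} A curve of $\Vft\cap\Do$ must lie over the origin, so it sits in $\overline{O_\tau}$ for $\tau$ a cone in the interior of $\sigma$; by Lemma \ref{lemma orb int} the intersection $\Vft\cap O_\tau$ is nonempty iff $\dim(\Delta_\tau(f))\neq 0$, i.e.\ $\tau$ pertinent, and it is $1$-dimensional when $\dim(\tau)=1$. The structure of $C_\tau=\overline{O_\tau}\cap\Vft$ — irreducible of genus $i_\tau$ when $\dim(\Delta_\tau(f))=2$, or $l_\tau(f)$ disjoint rational curves when $\dim(\Delta_\tau(f))=1$ — is a direct citation of Proposition \ref{prop orbits 1 and 2} (the $\dim(\tau)=1$ bullet). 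The multiplicity decoration $(1;h_\tau(f),h_\tau(g))$ is again Lemma \ref{lemma mult orbit}: here the $\Df$-component is $\Vft$ itself, whose $f$-multiplicity is $1$ (it is a strict transform), and the $\Do$-component is $\overline{O_\tau}$, with $f$- and $g$-multiplicities $h_\tau(f)$ and $h_\tau(g)$ by that lemma. For the intersection statement: two such curves $C_{\tau_1},C_{\tau_2}$ meet iff $\overline{O_{\tau_1}}\cap\overline{O_{\tau_2}}$ is a nonempty curve meeting $\Vft$, which by Proposition \ref{prop orb comb}\ref{point orb closure} happens iff $\tau_1,\tau_2$ are both faces of a common $2$-dimensional cone $\gamma$ in the interior of $\sigma$, and that $\gamma$ must be pertinent for $\Vft$ to meet $\overline{O_\gamma}$ (Lemma \ref{lemma orb int}); then $C_{\tau_1}\cap C_{\tau_2}=\Vft\cap O_\gamma$, which has cardinality $l_\gamma(f)$ by the $\dim(\tau)=2$ bullet of Proposition \ref{prop orbits 1 and 2}. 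All these points are of type $\plus$ because both curves lie on the single $\Df$-component $\Vft$ together with distinct $\Do$-components (Definition \ref{def gcdt}: a point on exactly one component of $\Df$). The last sentence — if $\dim(\tau_1)=1$ then $l_{\tau_1}(f)=l_\gamma(f)$ and each component of $C_{\tau_1}$ meets $C_{\tau_2}$ once — follows by comparing the two edge-length computations: $\Delta_{\tau_1}(f)\supseteq\Delta_\gamma(f)$ and when $\dim(\Delta_{\tau_1}(f))=1$ with $\tau_1\preceq\gamma$ the face $\Delta_\gamma(f)$ is either equal to $\Delta_{\tau_1}(f)$ or a vertex of it; pertinence of $\gamma$ forces the former (a vertex would give a non-pertinent $\gamma$ relative to the relevant count), so the $l_\gamma(f)$ intersection points distribute one to each of the $l_{\tau_1}(f)=l_\gamma(f)$ components, the distribution being governed by which component of the disjoint union $\Vft\cap O_{\tau_1}$ each point of $\Vft\cap O_\gamma$ specializes into.

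\textbf{Main obstacle.} The genuinely delicate point is the bookkeeping of \emph{which} cones are cutting cones versus interior pertinent cones and making sure the enumeration of components of $\Courbest$ is complete and without overlap — in particular verifying that $\Dfex\cap\Vgt=\emptyset$ (already noted: $\Fan_g$ has no cutting cone), that $\Vft\cap\Vgt$ is accounted for separately (it lives in $O_\sigma$ over the origin, handled by the transversality/BKK statement of Lemma \ref{BKK dim 2} rather than here), and that no curve is both of the first and second type. Equally, the last paragraph's claim that the $l_\gamma(f)$ intersection points distribute one-per-component of $C_{\tau_1}$ requires a local analysis of the specialization of points of $O_\gamma$ into the components of $\overline{O_{\tau_1}}\cap\Vft$; I expect this to need the explicit toric chart around $O_\gamma$ and the nondegeneracy of $f_{\Delta_\gamma(f)}$, i.e.\ an argument of the same flavour as the proof of Proposition \ref{prop orbits 1 and 2} in \cite{Oka97}, rather than a formal consequence of the orbit-cone dictionary alone.
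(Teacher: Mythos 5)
Your proposal is correct and follows essentially the same route as the paper, whose own proof is a terse two-item citation of exactly the results you invoke (Proposition \ref{prop orb comb}\ref{point orb closure} and Proposition \ref{prop modif refinement} together with Remark \ref{rk orb dim 2 f} and Lemma \ref{lemma mult orbit} for the exceptional curves; Proposition \ref{prop orbits 1 and 2}, Lemma \ref{lemma orb int} and Lemma \ref{lemma mult orbit} for the strict transform curves). Your write-up in fact supplies more of the bookkeeping (signs, the $\Delta_\gamma=\Delta_{\tau_1}$ argument for $l_{\tau_1}(f)=l_\gamma(f)$, rationality via compactness of the one-dimensional orbit closure) than the paper makes explicit, and the one point you flag as delicate — the one-point-per-component distribution — is likewise left implicit in the paper.
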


\begin{proof}
\begin{enumerate}
\item If $\gamma$ is a cutting cone, then by Proposition \ref{prop orb comb}, point \ref{point orb closure}, $\overline{O_\gamma}$ is the intersection of the closures of the orbits corresponding to its $1$-dimensional faces $\tau_1,\tau_2$, where $\tau_1$ is in the interior of $\sigma$. Now, Proposition \ref{prop modif refinement} implies that $\overline{O_{\tau_1}}$ is a component of $\Do$, and Remark \ref{rk orb dim 2 f} implies that $\overline{O_{\tau_2}}$ is a component of $\Dfex$.

The multiplicity decorations are obtained thanks to Lemma \ref{lemma mult orbit}.
\item This is a direct application of Proposition \ref{prop orbits 1 and 2}, together with Lemma \ref{lemma orb int} and Proposition \ref{prop orb comb}, point \ref{point orb closure}. Lemma \ref{lemma mult orbit} provides the multiplicities.
\end{enumerate}
\end{proof}

\begin{lemma}\label{lemma intersection exc strict} {\bf (Intersection of curves of the first and of the second type.)}

Let $C_1=\overline{O_\gamma}$ be an exceptional curve, and $C_2=C_\tau$ be a strict transform curve. Then $$C_1\cap C_2 \neq \emptyset \Leftrightarrow \tau \prec \gamma \text{ and } \gamma \text{ is pertinent}.$$

In these conditions, $$Card(C_1\cap C_2)=l_\gamma(f)$$ and each intersection point is of type $\minus$.

Furthermore, if $dim(\Delta_\tau(f))=1$, then $l_\gamma (f)=\cdot l_\tau(f)$, and each connected component of $C_2$ is intersected by $C_1$ in $1$ point.
\end{lemma}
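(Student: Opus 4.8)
The plan is to recognize that an exceptional curve $C_1=\overline{O_\gamma}$ is the closure of the orbit of a two-dimensional cutting cone $\gamma$, with one-dimensional faces $\tau_1\subset\sigma$ in the interior of $\sigma$ and $\tau_2$ whose minimal containing face in $\sigma$ is two-dimensional, while a strict transform curve $C_2=C_\tau$ corresponds to a one-dimensional pertinent cone $\tau$ in the interior of $\sigma$. First I would use Proposition \ref{prop orb comb}, point \ref{point orb closure}, which describes orbit closures as unions of orbits of cones having the given cone as a face: this converts the condition $\overline{O_\gamma}\cap\overline{O_\tau}\neq\emptyset$ into a combinatorial statement about which orbits lie in both closures. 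Since $\gamma$ is two-dimensional and $\tau$ is one-dimensional, the only way their closures can meet along a common orbit (of dimension $\leqslant 1$) is if $\tau$ is a face of $\gamma$, or if $\gamma$ and $\tau$ are both faces of a common higher cone whose orbit lies in the intersection; a short case analysis using that $\tau_1$ is the unique interior one-dimensional face of $\gamma$ and that $\tau$ must be an interior pertinent cone forces $\tau=\tau_1$, i.e.\ $\tau\prec\gamma$. The pertinence of $\gamma$ then must be added by hand: by Lemma \ref{lemma orb int}, $\Vft\cap O_\gamma\neq\emptyset$ exactly when $\dim\Delta_\gamma(f)\neq 0$, and only then does $\Vft$ actually meet the orbit stratum where $C_1$ and $C_2$ would cross.

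Next I would count the intersection points. The intersection $C_1\cap C_2$ takes place inside $\overline{O_\gamma}$, and it equals $\overline{O_\gamma}\cap\Vft$; by Proposition \ref{prop orbits 1 and 2} applied to the two-dimensional pertinent cone $\gamma$, this intersection (a zero-dimensional set here, being the trace of the curve $C_\tau$ on the curve $\overline{O_\gamma}$) has cardinality governed by the integral length $l(\Delta_\gamma(f))=l_\gamma(f)$. More precisely I would view $\overline{O_\gamma}\simeq\mathbb{P}^1$ as the toric curve associated to the cone $\gamma$, on which $\Vft\cap\overline{O_\gamma}$ — which is the same set as $\bigcup_\tau C_\tau\cap\overline{O_\gamma}$ for the faces $\tau$ of $\gamma$ — consists of $l_\gamma(f)$ reduced points lying over the zero-dimensional orbit $O_\gamma$, exactly as in the second bullet of Proposition \ref{prop orbits 1 and 2}. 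That gives $\Card(C_1\cap C_2)=l_\gamma(f)$.

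For the sign of each intersection point, I would invoke Definition \ref{def gcdt}: an edge is decorated $\minus$ when the corresponding double point of $\Courbest$ lies on exactly two distinct components of $\Df$. Here $C_1=\overline{O_\gamma}\subset\Dfex\subset\Df$ and $C_2=C_\tau\subset\Vft\subset\Df$ are two distinct components of $\Df$ (one exceptional, one a strict transform of $\Vf$), so by definition every point of $C_1\cap C_2$ is of type $\minus$. Finally, the last assertion — that when $\dim\Delta_\tau(f)=1$ one has $l_\gamma(f)=l_\tau(f)$ and each of the $l_\tau(f)$ rational components of $C_2$ meets $C_1$ in one point — follows from the structure statement in Proposition \ref{prop orbits 1 and 2}: when $\Delta_\tau(f)$ is one-dimensional, $\Delta_\gamma(f)\subset\Delta_\tau(f)$ and in fact $\Delta_\gamma(f)=\Delta_\tau(f)$ (the face of $\LNP[f]$ cut out by $\gamma$ refines that cut out by its face $\tau$, and a one-dimensional face has no proper one-or-higher-dimensional subface meeting the lattice nontrivially beyond itself in this configuration), so $l_\gamma(f)=l_\tau(f)$; combined with $C_\tau$ being a disjoint union of $l_\tau(f)$ rational curves each hitting the single orbit $O_\gamma$ once, we get the bijection between components of $C_2$ and intersection points.

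The main obstacle I expect is the purely combinatorial verification that $C_1\cap C_2\neq\emptyset$ forces $\tau\prec\gamma$ rather than some more exotic incidence: one must rule out that $\overline{O_\gamma}$ and $\overline{O_\tau}$ meet only through a common three-dimensional cone's orbit in a way that does not reduce to a face relation, and check that pertinence of $\gamma$ is genuinely needed (not automatic from pertinence of $\tau$). This requires carefully tracking, via Proposition \ref{prop modif refinement} and Remarks \ref{rk orb dim 2 f}–\ref{rk orb dim 2 g}, which one-dimensional faces of cones in $\Fant$ are interior to $\sigma$ versus sitting over two-dimensional faces of $\sigma$, and using that a cutting cone has exactly one face of each kind.
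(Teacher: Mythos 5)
Your argument follows the same route as the paper's proof: a case analysis on whether $\tau\prec\gamma$, using orbit-closure combinatorics and Lemma \ref{lemma orb int} to kill the non-face case (the common orbit is a $0$-dimensional $O_\delta$ missed by $\Vft$), then Proposition \ref{prop orbits 1 and 2} applied to the $2$-dimensional cone $\gamma$ to count $\Card(\overline{O_\gamma}\cap\Vft)=l_\gamma(f)$. It is correct, and in fact more complete than the paper's own proof, which leaves the $\minus$ sign and the claim $l_\gamma(f)=l_\tau(f)$ (via $\Delta_\gamma(f)=\Delta_\tau(f)$ when both are $1$-dimensional) implicit.
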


\begin{proof}
If $\tau$ is not a face of $\gamma$, then either $\tau$ and $\gamma$ are not faces of a same cone, in which case $\overline{O_\tau}\cap \overline{O_\gamma}$ is empty, or they are faces of a same $3$-dimensional cone $\delta$, and $\overline{O_\tau}\cap \overline{O_\gamma}= O_\delta$, which is not intersected by $\Vft$, in virtue of Lemma \ref{lemma orb int}.

If $\tau$ is a face of $\gamma$, then $\overline{O_\gamma}\subset \overline{O_\tau}$, and $\Card \left(\overline{O_\gamma} \cap \Vft\right)=l_\gamma (f)$ and by definition of $C_\tau$, each of these points of intersection is a point of $C_\tau$.
\end{proof}

To complete the description of the situation, we need to add the non compact curves of the intersection $\Vft \cap \Vgt$.

Pick generic coefficients of $g$ so that for any $2$-dimensional orbit $O_\tau$ of $\Xt$ intersected by $\Vft$, the truncations $f_{_{\Delta_\tau(f)}}$ and $g_{_{\Delta_\tau(g)}}$ verify the hypothesis of Lemma \ref{BKK dim 2}.

\begin{defn}
Denote $V(\tau)$ the mixed $2$-dimensional volume of $\Delta_\tau(f)$ and $\Delta_\tau(g)$. 
\end{defn}

With this choice of $g$, we can access the rest of the configuration $\Courbest$. Indeed,

\begin{lemma}{\bf (Adding non-compact curves)}

Let $C_\tau$ be a strict transform curve of $\Courbes$, possibly disconnected. Then $$Card\left(C_\tau\cap \Vgt\right)=V(\tau),$$ each of these intersection points being an intersection point of $C_\tau$ with a curve in $\Vft\cap \Vgt$.

Furthermore, if $dim(\Delta_\tau(f))=1$, $\exists ~ k\in \N\text{ such that } V(\tau)=k\cdot l_\tau(f)$, and each connected component of $C_\tau$ is intersected in $k$ points.

Each of these points is of type $\plus$, and, in $\Gcdt$, the new curves are represented by arrowheads decorated with $(1;0,1).$
\end{lemma}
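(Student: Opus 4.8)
The plan is to analyze the intersection $C_\tau \cap \Vgt$ locally on the $2$-dimensional orbit $\orb[\tau]$, where $\tau$ is a $1$-dimensional pertinent cone of $\Fant$, reducing everything to the two-dimensional Bernstein–Koushnirenko–Khovanskii statement of Lemma \ref{BKK dim 2}. First I would recall that a strict transform curve $C_\tau = \Vft \cap \overline{\orb[\tau]}$ meets the torus $\orb[\tau]$ in a dense open set, and that by construction $\Fan_g$ contains no cutting cone, so $\Vgt$ does not meet any component of $\Do$; hence any intersection of $C_\tau$ with $\Vgt$ happens inside $\orb[\tau]$ and belongs to $\Vft \cap \Vgt$. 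Identifying $\orb[\tau] \cong (\C^*)^2$ with the two-dimensional torus associated to the lattice $N(\tau) := N/(\tau \cap N)$ and its dual $M(\tau) := \tau^\perp \cap M$, the restrictions $\ft|_{\orb[\tau]}$ and $\gt|_{\orb[\tau]}$ are, up to multiplication by a monomial unit (coming from the multiplicities $h_\tau(f), h_\tau(g)$ along $\overline{\orb[\tau]}$), exactly the truncations $f_{\Delta_\tau(f)}$ and $g_{\Delta_\tau(g)}$ read in the lattice $M(\tau)$, whose Newton polygons are the projections of $\Delta_\tau(f)$ and $\Delta_\tau(g)$.

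The key steps are then: (1) Since the chosen generic $g$ makes $f_{\Delta_\tau(f)}$, $g_{\Delta_\tau(g)}$ satisfy the hypotheses of Lemma \ref{BKK dim 2} on the two-dimensional lattice $M(\tau)$, that lemma gives that their zero loci meet transversally only on the open torus $\orb[\tau]$, in exactly $V(P_1,P_2)$ points, where $P_1 = \Delta_\tau(f)$, $P_2 = \Delta_\tau(g)$; this mixed volume is by definition $V(\tau)$, giving $\Card(C_\tau \cap \Vgt) = V(\tau)$. (2) Each such point lies on $\Vgt$, and since $\Vgt$ is smooth outside the origin with $\Vgt$ meeting $\Vf$ transversally outside the origin (companion conditions \ref{g smooth}, \ref{intersection transverse}), each intersection point of $C_\tau$ with $\Vgt$ is in fact a point of a component of $\Vft \cap \Vgt$. (3) For the splitting when $\dim(\Delta_\tau(f)) = 1$: here $C_\tau$ is a disjoint union of $l_\tau(f)$ rational curves permuted transitively by the residual torus action along $\orb[\tau]$ (the one-parameter subgroup corresponding to directions in which $\Delta_\tau(f)$ is not constant), and since $\Vgt$ is invariant under nothing special but the full count $V(\tau)$ is shared equally among the $l_\tau(f)$ components by this symmetry — more precisely the mixed volume factors as $V(\tau) = k \cdot l_\tau(f)$ because the polygon $\Delta_\tau(f)$ is a segment of integral length $l_\tau(f)$, so $V(P_1, P_2) = l_\tau(f) \cdot (\text{height of } P_2 \text{ in the transverse direction})$, yielding $k$ points on each component. (4) The type of each intersection point is $\plus$ because such a point lies on only one component of $\Df$ (namely $\Vft$; the other curve through it is $\Vgt \notin \Df$), matching the edge-decoration rule of Definition \ref{def gcdt}; and the arrowhead decoration $(1;0,1)$ records that the non-compact curve $C \subset \Vft \cap \Vgt$ has multiplicity $1$ for $f$ (it lies in $\Vft$, a single component of $\Vf$'s strict transform, and $\ft$ has multiplicity $1$ along $\Vft$), multiplicity $0$ for $f$ along the non-existent second $\Df$-component, and multiplicity $1$ for $g$ along $\Vgt$.

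The main obstacle I expect is step (3), the equidistribution of the $V(\tau)$ intersection points among the $l_\tau(f)$ connected components of $C_\tau$ when $\Delta_\tau(f)$ is one-dimensional. This requires identifying the precise torus (or finite-group) symmetry that permutes the components of $C_\tau$ and checking that the generic $g$ can be chosen compatibly — or, alternatively, computing the mixed volume $V(P_1,P_2)$ directly when $P_1$ is a segment and observing the factorization $V(\tau) = l_\tau(f) \cdot k$ arithmetically, then arguing each component, being itself a ``$\Vft \cap \orb[\tau']$'' for a finer orbit, carries the same count by a fibered version of Lemma \ref{BKK dim 2}. Everything else is a careful but routine translation of the companion conditions and of Lemma \ref{BKK dim 2} into the local toric coordinates on $\orb[\tau]$.
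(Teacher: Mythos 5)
Your proposal is correct and follows essentially the same route as the paper, whose entire proof is the one-line observation that the statement is a consequence of Lemma \ref{BKK dim 2} together with the reducedness of $\Vf$ and $\Vg$ giving the multiplicities $1$. You have simply unpacked that reduction (restriction to the orbit $\orb[\tau]$, identification of the truncations, the mixed-volume count, the factorization $V(\tau)=k\cdot l_\tau(f)$ when $\Delta_\tau(f)$ is a segment, and the reading of the decorations), which is exactly the intended argument.
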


\begin{proof}
This is a consequence of Lemma \ref{BKK dim 2}, and the reduceness of $\Vf$ and $\Vg$ provide the multiplicities $1$ on the components of strict tranforms.
\end{proof}

\begin{ex}
Figure \ref{fig:gcdt} shows the graph $\Gcdt$ of our example. The representation of the graph reflects the disposition of the fan $\Fant$ of figure \ref{fig:sigmarafnb}. Following the classical convention, we do not indicate the decorations $\plus$.

\begin{figure}[h]
\begin{center}
        \includegraphics[totalheight=7cm]{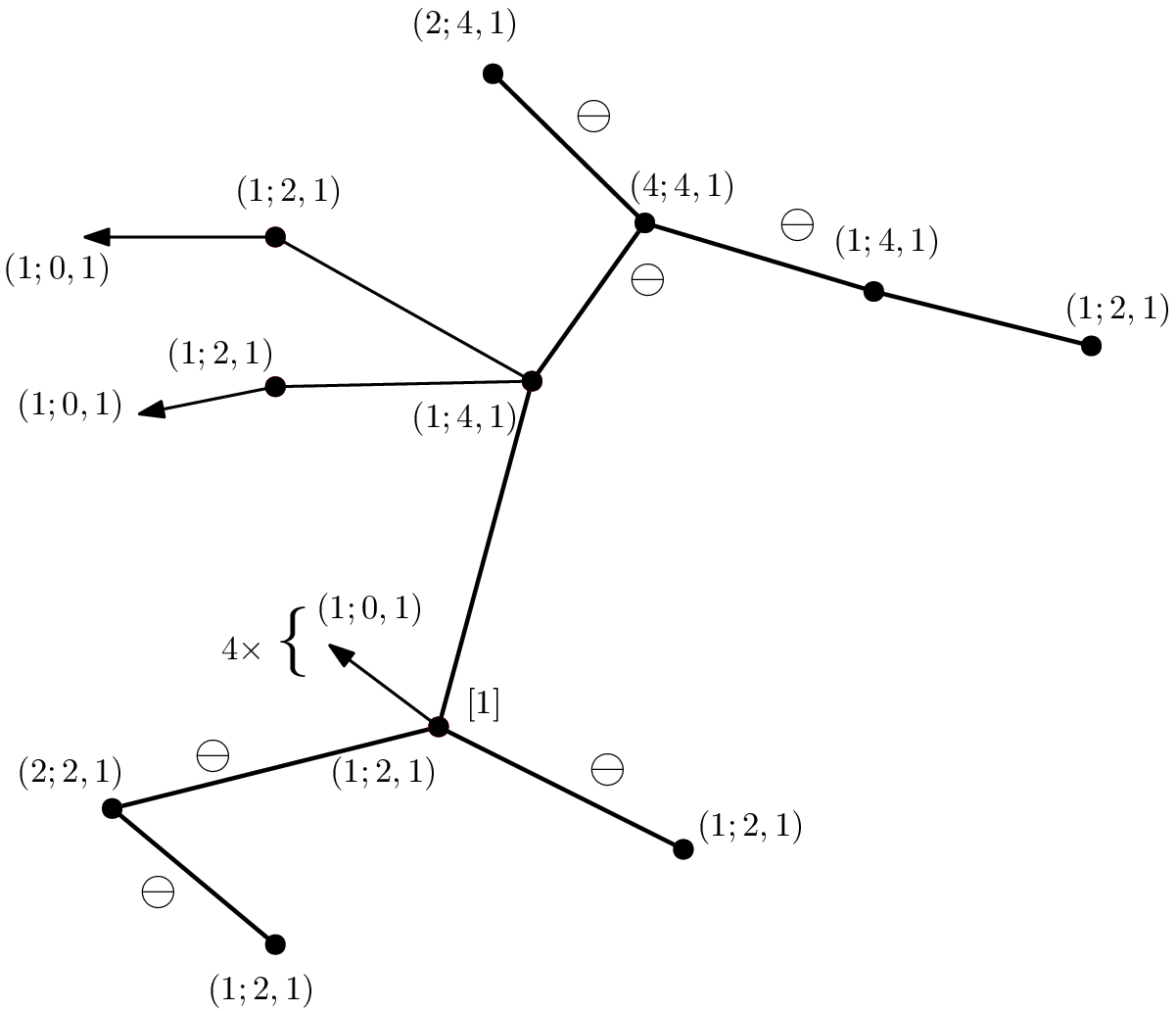}
       \caption{The graph $\Gcdt$.}
    \label{fig:gcdt}
\end{center}
\end{figure}
\end{ex}

\section{Final step of the computation and sufficiency of $\Courbest$}\label{section sufficiency}
In order to get a plumbing graph for the boundary of the Milnor fiber of $f$, one needs to modify the graph $\Gcdt$. Unlike in the general case, the necessary data for this operation is entirely encoded in $\Gcdt$. Indeed, one has the following direct consequences of our construction:
\begin{lemma}\label{lemma sufficiency}
\begin{enumerate}
\item \label{rational} Every non-rational curve of $\Courbest$ has a multiplicity decoration of the form $(1;m_2,n_2)$.
\item \label{simple cycles} There is no cycle in $\Gcdt$ made of curves of $\Dfex \cap \Do$, so each cycle in the graph $\Gcdt$ contains a vertex having a multiplicity decoration of the form $(1;m_2,n_2)$.
\end{enumerate}
\end{lemma}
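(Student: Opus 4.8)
The plan is to derive both assertions directly from the explicit multiplicity decorations computed in the previous subsection, together with the combinatorial description of which curves of $\Courbest$ meet which. For assertion \eqref{rational}, recall that $\Courbest$ is decomposed into the three types of curves (exceptional curves, strict transform curves, and the non-compact arrowhead curves of $\Vft\cap\Vgt$). The non-compact curves carry decoration $(1;0,1)$, and by the two propositions on compact curves, every exceptional curve $\overline{O_\gamma}$ (with $\gamma$ a cutting cone) is rational, while every strict transform curve $C_\tau$ has multiplicity decoration $(1;h_\tau(f),h_\tau(g))$ — so its \textbf{first} multiplicity entry is $1$. Hence any curve of $\Courbest$ whose first multiplicity entry is \emph{not} $1$ must be an exceptional curve, and exceptional curves are rational. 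Contrapositively, a non-rational curve must be a strict transform curve, whose decoration is of the form $(1;m_2,n_2)$. First I would simply assemble this case distinction, citing the decoration formulas from the "Two types of compact curves" proposition and the "Adding non-compact curves" lemma.

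For assertion \eqref{simple cycles}, the first task is to show there is no cycle in $\Gcdt$ all of whose vertices correspond to curves of $\Dfex\cap\Do$ — equivalently, no cycle among the exceptional curves $\overline{O_\gamma}$. Here the key point is the combinatorics of cutting cones inside the fan $\Fant$: an exceptional curve $\overline{O_\gamma}$ corresponds to a cutting cone $\gamma$, a $2$-dimensional cone having one $1$-face $\tau_1$ in the interior of $\sigma$ and one $1$-face $\tau_2$ whose minimal containing face in $\sigma$ is $2$-dimensional. Two exceptional curves meet iff the corresponding cutting cones are faces of a common $3$-dimensional cone of $\Fant$. So a cycle of exceptional curves would give a cyclic chain of cutting cones, consecutive ones sharing a $1$-face and lying in a common $3$-cone. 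I would argue this is impossible by tracking the $1$-face of each cutting cone that lies in the $2$-dimensional boundary face(s) of $\sigma$ over which $\Dfex$ lives: the cutting cones incident to a fixed such boundary face, together with the refinement structure there, form a tree (indeed a path) rather than containing a cycle — intuitively because refining a $3$-dimensional cone along a cutting cone produces a planar, tree-like pattern near that face, with no room for a loop among the $\overline{O_\gamma}$'s. Once no such cycle exists, any cycle in $\Gcdt$ must use a vertex not of type $\Dfex\cap\Do$; by the classification of curve types and their decorations (exceptional curves being the only ones with possibly non-unit first multiplicity, and all other compact curves and arrowheads having first multiplicity $1$), that vertex has decoration of the form $(1;m_2,n_2)$, which is what is claimed.

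The main obstacle I expect is the combinatorial argument that the exceptional curves contain no cycle. One must pin down precisely how the non-canonical refinements producing $\Fh$ and then $\Fant$ interact with the several $2$-dimensional boundary faces of $\sigma$ that support components of $\Dfex$, and rule out a cycle that "travels around" through several such faces. The cleanest route is probably to observe that each exceptional curve $\overline{O_\gamma}$ has exactly one of its two incident orbit closures in $\Dfex$ and one in $\Do$, and that an edge of $\Gcdt$ between two exceptional curves is a point lying on a single component of $\Dfex$ (the $\plus$ condition), so following a would-be cycle of exceptional curves keeps us on one irreducible component $D$ of $\Dfex$; then a cycle of curves all contained in the fixed surface $D\cong$ (a toric surface, namely $\overline{O_{\tau_2}}$ for a fixed ray $\tau_2$) would contradict the fact that the dual graph of the exceptional curves \emph{inside} that one resolved toric surface is a tree, since it is the exceptional graph of a resolution of a normal surface germ restricted to a chain of infinitely-near points coming from a planar (two-dimensional) refinement. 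I would make this precise and that closes the argument; everything else is bookkeeping with the decoration formulas already established.
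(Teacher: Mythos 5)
Your argument for assertion~\eqref{rational} is correct and is exactly the paper's: the only non-rational curves of $\Courbest$ are strict transform curves $C_\tau$, whose decoration $(1;h_\tau(f),h_\tau(g))$ has first entry $1$; exceptional curves are rational and arrowheads carry $(1;0,1)$.

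For assertion~\eqref{simple cycles} there is a genuine gap in your reduction. You claim that an edge of $\Gcdt$ between two exceptional curves lies on a single component of $\Dfex$ (``the $\plus$ condition''), so that a would-be cycle of exceptional curves stays inside one irreducible component $D=\overline{O_{\tau_2}}$ of $\Dfex$. This is false, and contradicts the paper's own description of such intersection points, which are decorated $\plus$ \emph{if and only if} the two curves lie on the same component of $\Dfex$ --- i.e.\ $\minus$-edges between exceptional curves do occur. Concretely, if a $3$-dimensional cone $\delta=\langle\tau_1,\tau_2,\tau_2'\rangle$ of $\Fant$ has one interior ray $\tau_1$ and two distinct rays $\tau_2\neq\tau_2'$ interior to the \emph{same} $2$-dimensional face of $\sigma$ (which is typical as soon as that face carries several rays of $\Fant$), then $\gamma=\langle\tau_1,\tau_2\rangle$ and $\gamma'=\langle\tau_1,\tau_2'\rangle$ are both cutting cones, their curves meet at $O_\delta$, and they lie on the two different components $\overline{O_{\tau_2}}$, $\overline{O_{\tau_2'}}$ of $\Dfex$. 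So a hypothetical cycle may hop between components of $\Dfex$, and your tree argument inside a single fixed $D$ does not close the case. The correct organizing unit --- and the one the paper uses --- is not a component of $\Dfex$ but a $2$-dimensional face $\tau$ of $\sigma$ with $O_\tau\subset\Vf$ (equivalently a $1$-dimensional orbit of $X_\sigma$ contained in $\Vf$): all exceptional curves whose cutting cone has its boundary ray in $\tau$ form a single chain (a path, alternating between the components of $\Dfex$ over $\tau$ and possibly several interior rays), and the chains attached to distinct faces $\tau$ are pairwise disjoint. No cycle can occur within a chain or across disjoint chains, which gives~\eqref{simple cycles}; the concluding step, that any cycle must then contain a vertex with first multiplicity $1$, follows from~\eqref{rational}-type bookkeeping as you say.
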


\begin{proof}
\begin{enumerate}
\item The only non-rational curves of $\Gcdt$ are strict tranform curves, having therefore a first multiplicity decoration equal to $1$.
\item The curves of $\Dfex\cap \Do$ are organized in chains, one for each $1$-dimensional orbit of $X_\sigma$ contained in $\Vf$, and these chains have two-by-two empty intersection.
\end{enumerate}
\end{proof}

In the language developped in \cite{cur19.2}, point \ref{rational} implies that the data of the so-called switches is not required, and point \ref{simple cycles} implies that a covering data of $\Gcdt$ determines a unique graph, see \cite[Theorem 1.20]{Nem00}.

In other words, to obtain a plumbing graph for $\partial F$, the data contained in $\Gcdt$ are sufficient. As in \cite[10.2]{NemSzi12} (see also \cite{cur19.2} for more details), a plumbing graph for the boundary $\partial F$ of the Milnor fiber of $f$ is obtained from $\Gcdt$ by the following two steps:

\subsection{The multiplicity graph $\Gmult$}\label{subs mult graph}
Denote $\Gmult$ the graph obtained from $\Gcdt$ in the following way:

\begin{enumerate}
\item \label{cov vertex} Let $v_C$ be a vertex of $\Gcdt$ corresponding to a curve $C$ of genus $g$, such that the union of $v_C$ and the adjacent vertices is of the form

\begin{center}
\begin{tikzpicture}

\filldraw 
(-0.7,0) circle (0pt) node[above] {$(m_1;m_s,n_s)$} node[below] {$[g_s]$}
(6.7,0) circle (0pt) node[above] {$(m_{s+t};m_2,n_2)$} node[below] {$[g_{s+t}]$}
(6,0) circle (2pt) node {}
(0,0) circle (2pt) node {}
(3,0.8) circle (2pt) node {}
(3,1) circle (0pt) node[above] {$(m_1;m_2,n_2)$} 
(3,0.8) circle node[below] {$[g]$}
(0,1.6) circle (2pt) node[above] {$(m_1;m_3,n_3)$} node[below] {$[g_3]$}
(6,1.6) circle (2pt) node[above] {$(m_{s+1};m_2,n_2)$} node[below] {$[g_{s+1}]$}
(0.5,0.9) circle (0pt) node {$\vdots$}
(5.5,0.9) circle (0pt) node {$\vdots$}
;
\draw (0,0) -- node[above]{$\plus$} node[below]{$\nu_s$} (3,0.8);
\draw (0,1.6) -- node[above]{$\plus$} node [below]{$\nu_3$}(3,0.8);
\draw (6,0) -- node[above]{$\minus$} node[below]{$\nu_{s+t}$} (3,0.8);
\draw (6,1.6) -- node[above]{$\minus$} node[below]{$\nu_{s+1}$}(3,0.8);

\end{tikzpicture}

\end{center}

where $\nu_i$ indicates that an edge is repeated $\nu_i$ times. Then replace $v_C$ by the union of $n_C:=gcd(m_1,\cdots,m_{s+t})$ vertices, each one of them decorated by the same genus $g_C$ verifying 
$$ n_C\cdot (2-2\cdot g_C)=\left(2-2g-\sum\limits_{i=3}^{s+t}\nu_i\right)\cdot gcd(m_1,m_2) + \sum\limits_{i=3}^{s+t}gcd(m_1,m_2,m_i)\cdot \nu_i,$$

 and a mutliplicity decoration given by $\mu_C:=\dfrac{m_1\cdot n_2}{gcd(m_1,m_2)}$.
\item \label{cov edge} \begin{itemize}
\item An edge of type $\plus$, of the form \begin{center}
\begin{tikzpicture}

\filldraw 
(0,0.3) circle (0pt) node[above] {$(m_2;m_1,n_1)$} 
(3,0.3) circle (0pt) node[above] {$(m_2;m_3,n_3)$}
(0,0) circle (2pt) node[below] {} 
(3,0) circle (2pt) node[below] {}
(0,-0.2) circle (0pt) node[below] {$[\chi]$} 
(3,-0.2) circle (0pt) node[below] {$[\chi']$}
(-0.2,0) circle (0pt) node[left] {$v_{\Ct}$} 
(3.2,0) circle (0pt) node[right] {$v_{\Cpt}$} 
;
\draw (0,0) -- node[above]{$\plus$} (3,0);

\end{tikzpicture}
\end{center}

is replaced by $d:=gcd(m_1,m_2,m_3)$ identical strings of type $$Str^\plus\left(\frac{m_2}{d};\frac{m_1}{d},\frac{m_3}{d}|0;n_1,n_3\right).$$

\item An edge of type $\minus$, of the form 

\begin{center}
\begin{tikzpicture}

\filldraw 
(0,0.3) circle (0pt) node[above] {$(m_2;m_1,n_1)$} 
(3,0.3) circle (0pt) node[above] {$(m_3;m_1,n_1)$}
(0,0) circle (2pt) node[below] {} 
(3,0) circle (2pt) node[below] {}
(0,-0.2) circle (0pt) node[below] {$[\chi]$} 
(3,-0.2) circle (0pt) node[below] {$[\chi']$}
(-0.2,0) circle (0pt) node[left] {$v_{\Ct}$} 
(3.2,0) circle (0pt) node[right] {$v_{\Cpt}$} 
;
\draw (0,0) -- node[above]{$\minus$} (3,0);

\end{tikzpicture}
\end{center}

is replaced by $d:=gcd(m_1,m_2,m_3)$ identical strings of type $$Str^\minus\left(\frac{m_1}{d};\frac{m_2}{d},\frac{m_3}{d}|n_1;0,0\right).$$

\end{itemize}

The notations $Str$ are explained in Appendix \ref{appendix resol HJ}. In both cases, the number $d$ of new edges is a multiple of the number of new vertices, and the edges are distributed \textbf{uniformly} on the vertices.
\end{enumerate}

\begin{ex}
Figure \ref{fig:Gmult} shows the graph $\Gmult$ of our example. Here, the different bamboos are simple and do not bring any new vertices, but the upper vertex gave rise to two vertices, and the edge ending at this vertex has also produced two edges. The multiplicity decorations on $\Gmult$ correspond to the multiplicities of the pullback of the function $g$ on each irreducible surface of $(g\circ \Pi)^{-1}(0)$.

\begin{figure}
\begin{center}
\includegraphics[totalheight=7cm]{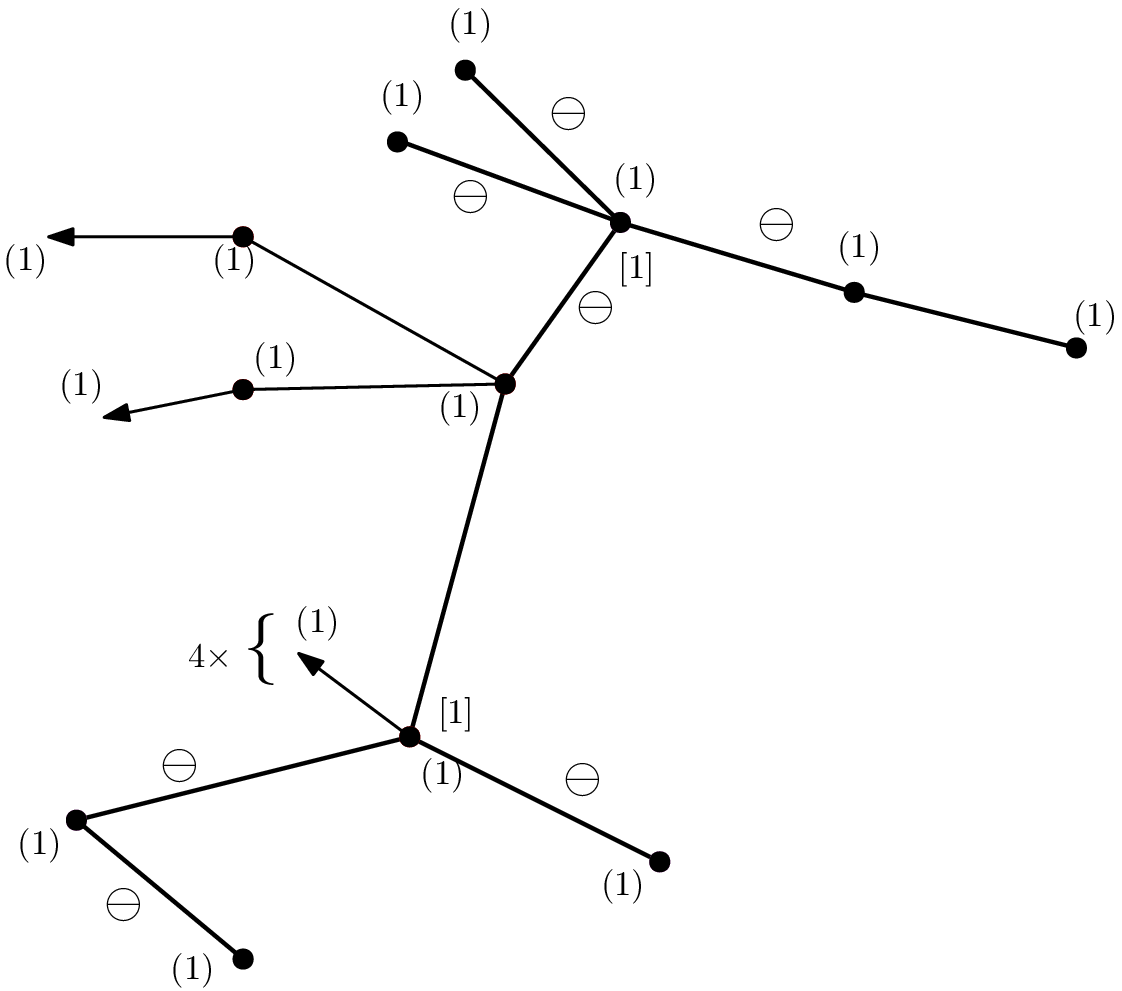}
\caption{The graph $\Gmult$.}
\label{fig:Gmult}
\end{center}
\end{figure}
\end{ex}

\subsection{The plumbing graph $\Gplomb$}\label{subs plumbing graph}

A plumbing graph $\Gamma(\partial F)$ is obtained from $\Gmult$ by replacing the multiplicity decorations $(\mu_i)$ by the self-intersections $k_i$ of the components of the preimage of the origin by $\Pi$ in $\Sb$, erasing the arrowhead vertices, and keeping the genus and edge decorations.

Let $v$ be a vertex of $\Gmult$, with multiplicity $\mu$. Let $v_1,\cdots,v_n$ be the adjacent vertices (including arrowheads), with respective multiplicities $\mu_1,\cdots,\mu_n$, and denote $\epsilon_1,\cdots,\epsilon_n$ the decorations of the corresponding edges. Then, following Lemma \ref{lemma compute self-intersections}, the self-intersection decoration $k$ of $v$ in $\Gplomb$ is obtained through the equality $$k\cdot \mu =-\sum\limits_{i=1}^{n} \epsilon_i \cdot \mu_i.$$

\begin{ex}
Figure \ref{fig:Gplomb} shows the plumbing graph for $\partial F$ obtained after applying the step described previously.

\begin{figure}
\begin{center}
\includegraphics[totalheight=7cm]{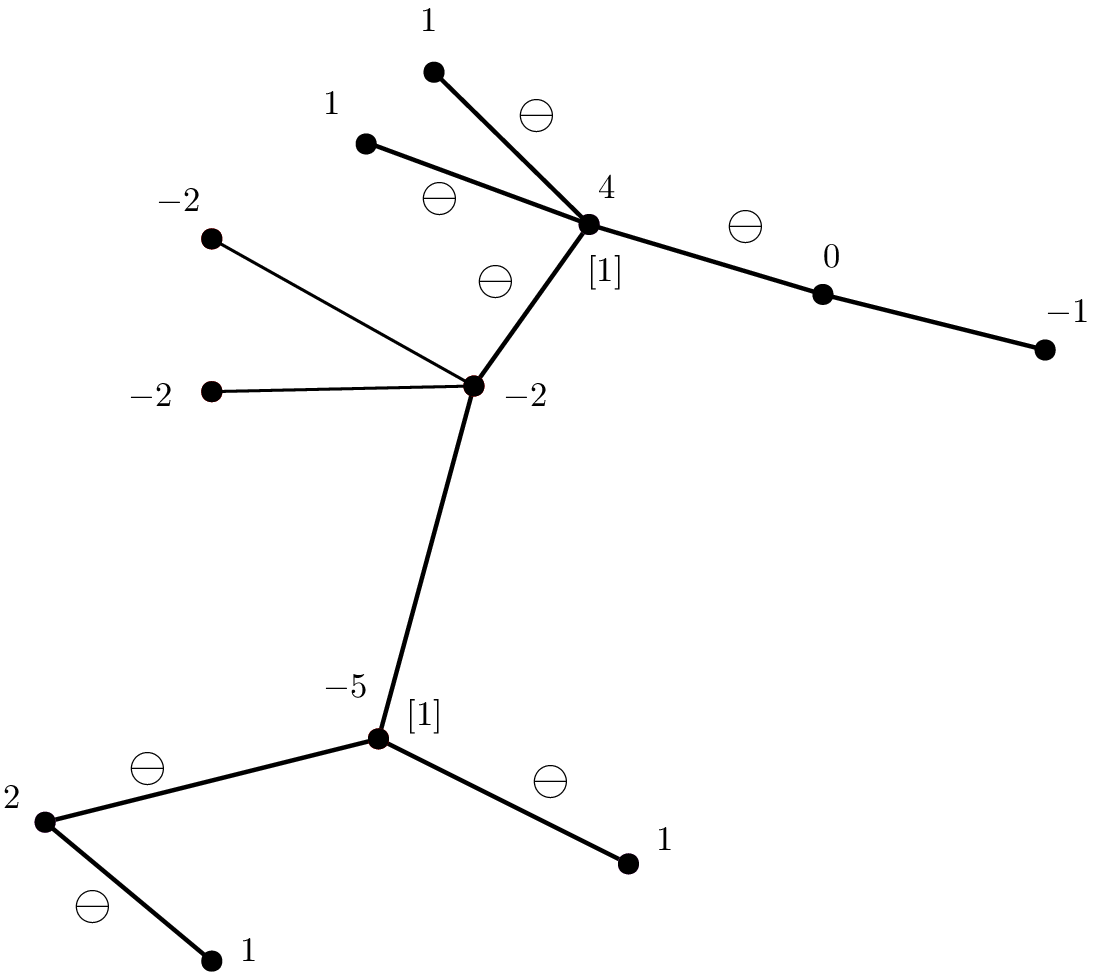}
\caption{The graph $\Gplomb$.}
\label{fig:Gplomb}
\end{center}
\end{figure}

\end{ex}

In \cite{Neu81}, the author describes the so-called \textbf{plumbing calculus}, which consists in a list of transformations one can apply on a given plumbing graph without changing the associated graph manifold. Although there are infinitely many different graphs encoding the same graph manifold, Neumann provides in \cite[Section 4]{Neu81} the definition of the \textbf{normal form} of a given plumbing graph, which is uniquely defined, as well as an algorithm to obtain it. This description is quite long, so we refer the interested reader to this work.

\begin{ex}

Figure \ref{fig:Gplombnormal} shows the normal form of this graph of Figure \ref{fig:Gplomb}. Note that, unlike in the case of isolated singularities, this graph does not have a negative definite incidence matrix.

\begin{figure}
\begin{center}
\includegraphics[totalheight=3cm]{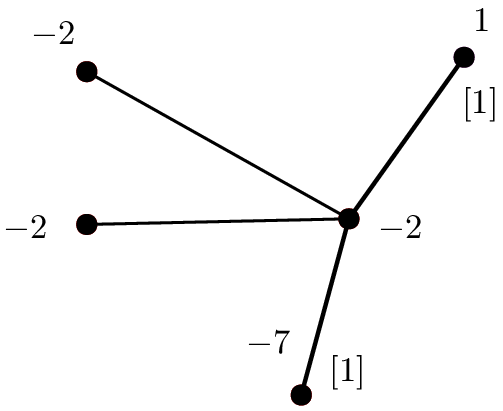}
\caption{The normal form of $\Gplomb$.}
\label{fig:Gplombnormal}
\end{center}
\end{figure}
\end{ex}

The plumbing calculus does not preserve planarity. However, the process of normalization of a given plumbing graph preserves this property. Therefore, as a consequence of our method of computation, we get:
\begin{prop}\label{prop planar}
The normal form of the plumbing graph of the Milnor fiber of a Newton non degenerate singularity of complex surface is planar.
\end{prop}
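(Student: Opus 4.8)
The plan is to follow planarity through the three stages of the algorithm: from the fan $\Fant$ to the graph $\Gcdt$; from $\Gcdt$ to $\Gmult$, and hence to $\Gplomb$, whose underlying graph is that of $\Gmult$ with the arrowhead vertices deleted; and from $\Gplomb$ to its normal form. For the first stage I would exhibit a planar embedding of $\Gcdt$ inside a transverse cross-section of the cone $\sigma$. Pick an affine hyperplane $H\subset N_\R$ meeting the interior of $\sigma$, so that $P:=H\cap\sigma$ is a bounded convex polygon; because $\Fant$ refines $\sigma$, the traces on $H$ of the cones of $\Fant$ make $P$ into a planar cell complex whose $2$-cells, edges and vertices are indexed by the $3$-, $2$- and $1$-dimensional cones of $\Fant$, respectively. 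By the description of $\Courbest$ obtained in Section \ref{section construction graphe}, the vertices of $\Gcdt$ come in three kinds: the exceptional-curve vertices $v_{\overline{O_\gamma}}$, indexed by the cutting cones $\gamma$, which are precisely the edges of $P$ joining an interior vertex to a boundary vertex; the strict-transform-curve vertices $v_{C_\tau}$, indexed by the pertinent $1$-dimensional cones $\tau$, which are interior vertices of $P$; and the arrowheads, which by the last lemma of that section are attached only to vertices of the second kind. Placing $v_{\overline{O_\gamma}}$ at the midpoint of its edge of $P$, $v_{C_\tau}$ at its vertex of $P$, and each arrowhead as a short stub near the corresponding $v_{C_\tau}$, each edge of $\Gcdt$ — joining two cutting cones that are faces of a common $3$-dimensional cone, joining a cutting cone to one of its $1$-dimensional faces (Lemma \ref{lemma intersection exc strict}), or joining two $1$-dimensional cones that are faces of a common $2$-dimensional cone — can be drawn along a single edge of $P$ or across a single $2$-cell, a repeated edge being drawn as a bundle of mutually parallel arcs. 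No two arcs cross, so $\Gcdt$ is planar.

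For the second stage, recall that $\Gmult$ is built from $\Gcdt$ by the local operations of Subsection \ref{subs mult graph}: each edge is replaced by a number of parallel strings (bamboos), and each vertex $v_C$ is replaced by $n_C$ copies among which the incident strings are distributed uniformly. Replacing an edge by parallel strings preserves planarity. For the vertex operation I would work inside a small disk around $v_C$ in the embedding built above: since the number of strings issued from every incident edge is a multiple of $n_C$, the operation amounts to choosing a cut of this disk into $n_C$ sub-disks such that each sub-disk meets the prescribed share $d_e/n_C$ of the strings of every incident edge $e$, the same cut being made at the far endpoint of each edge; a compatible choice keeps the embedding planar. Passing to $\Gplomb$ (Subsection \ref{subs plumbing graph}) then only changes decorations and deletes arrowheads, so $\Gplomb$ is planar.

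For the third stage, the normal form of $\Gplomb$ is produced by Neumann's normalization algorithm, \cite[Section 4]{Neu81}. Each reduction move of that algorithm — blowing down a $(\pm1)$-vertex carrying at most two incident edges, absorbing $0$-chains and $0$-vertices, sign changes, and the remaining moves on that list — alters the underlying graph only by deleting vertices and edges or contracting a path to a single edge, operations under which planarity is preserved. Hence the normal form of the planar plumbing graph $\Gplomb$ is planar, which is the statement.

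The step I expect to be the real obstacle is the vertex operation of the second stage: one must show that the uniform distribution of the parallel strings among the $n_C$ copies of $v_C$ prescribed by the multiplicity data — which is dictated by a cyclic-covering recipe and does not a priori respect the cyclic order of the edges around $v_C$ — can always be realised by such a cut of the disk, i.e. that this covering combinatorics never forces a $K_5$ or $K_{3,3}$ minor. The way I would address it is to use the explicit local model of $\Courbest$ near each of its curves together with the structure of the multiplicity triples recalled in step \ref{cov vertex} of Subsection \ref{subs mult graph}, which separates the $\plus$-edges (the curve lies on a single component of $\Df$) from the $\minus$-edges (it lies on two): near each curve the relevant cover is a cyclic cover of that curve and of its transverse discs — one- and two-dimensional objects — so that every incident edge gives rise not to a complete bipartite graph but to a union of matchings between the copies of its two endpoints, and each local piece is a disjoint union of discs which can be reglued in the plane.
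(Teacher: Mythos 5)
Your overall strategy is exactly the one the paper relies on: planarity of the computed graph is inherited from a planar cross-section of the fan, it survives the passage $\Gcdt\rightsquigarrow\Gmult\rightsquigarrow\Gplomb$, and Neumann's normalization moves (unlike general plumbing calculus) only delete, contract or re-decorate, hence preserve planarity. The paper itself compresses all of this into two sentences before the statement of Proposition~\ref{prop planar}, so your stages 1 and 3 are a faithful and welcome expansion of what is left implicit there; stage 1 should only be amended to account for the fact that a single pertinent $1$-dimensional cone $\tau$ may contribute $l_\tau(f)$ vertices (the components of $C_\tau$) rather than one, which is harmless because the proposition guarantees that consecutive clusters are joined by matchings.

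The genuine gap is where you say it is, in stage 2, and your proposed repair does not close it. First, the fibre of $\Gmult\to\Gcdt$ over an edge is \emph{not} in general a union of matchings between the $n_C$ copies of one endpoint and the $n_{C'}$ copies of the other: with the cyclic-cover incidence, the $d$ strings decompose into $\gcd(n_C,n_{C'})$ complete bipartite blocks of type $K_{n_C/g,\,n_{C'}/g}$ (with multiplicities), which is a union of matchings only when one of $n_C,n_{C'}$ divides the other; so the local model you invoke must be justified, not asserted. Second, even when each edge-fibre is individually planar, splitting a vertex into $n_C$ copies is planar only if the strings allotted to each copy can be made \emph{contiguous} in the cyclic order around the old vertex, and the uniform distribution coming from the normalization does not obviously have this property --- this is precisely the contiguity statement your ``cut of the disk into $n_C$ sub-disks'' presupposes. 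The ingredient that actually rescues the argument, and which you do not use, is Lemma~\ref{lemma sufficiency}(\ref{simple cycles}) together with \cite[Theorem 1.20]{Nem00}: since every cycle of $\Gcdt$ contains a vertex with $m_1=1$ (hence $n_C=1$), the covering data determines the graph $\Gmult$ \emph{uniquely up to isomorphism}, so one is free to realize it by any uniform distribution one likes --- in particular by a contiguous, hence planar, one, each non-contiguity being confined to a subgraph that contains no cycle of the cover. Rewriting stage 2 around this uniqueness statement, rather than around a direct analysis of the cyclic-cover incidence, is what turns your sketch into a proof.
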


\begin{appendices}

\section{Tubular neighbourhoods and graph manifolds}
In the setting of isolated singularities of complex surfaces, graph manifolds appear naturally as boundaries of neigbourhoods of the exceptional divisors of their normal crossings resolutions, see \cite{Mum61}. In our context, they will appear as boundaries of neighbourhoods of real surfaces inside smooth real analytic $4$-manifolds, which motivates the following definition: 

\begin{defn}{\bf (Simple configuration of surfaces, and its plumbing dual graphs.)}\label{def simple conf of surfaces, plumbing graphs.}

Let $\Sb$ be a $4$-dimensional oriented real analytic manifold. A \textbf{simple configuration of compact real analytic surfaces in $\Sb$} is a subset $E\subset \Sb$ such that:
\begin{enumerate}
\item $E=\bigcup\limits_{finite}E_i$, such that each $E_i$ is an oriented closed smooth real analytic surface.
\item For all $i\neq j \neq k \neq i$, the intersection $E_i\cap E_j \cap E_k$ is empty.
\item For all $i\neq j$, the intersection $E_i \cap E_j$ is either empty or transverse. In particular, it is a finite union of points.
\end{enumerate}
In this setting, one defines a \textbf{plumbing dual graph} $\Gdp{E}{\Sb}$ of $E$ in $\Sb$ by decorating its dual graph in the following way:
\begin{enumerate}
\item Decorate each vertex $v_{E_i}$ by the self-intersection $e_i$ of $E_i$ in $\Sb$ and by the genus $[g_i]$ of the surface $E_i$. 
\item Decorate each edge of $\Gra[E]$ corresponding to the intersection point $p$ of $E_i\cap E_j$, by $\oplus$ if the orientation of $E_i$ followed by the orientation of $E_j$ is equal to the orientation of $\Sb$ at $p$, and by $\ominus$ otherwise. 
\end{enumerate}
\end{defn}

\begin{defn}\label{def rug function}(See \cite{Dur83}.) Let $E$ be a simple configuration of compact orientable real analytic surfaces in an oriented $4$-dimensional real analytic manifold $\Sb$. In this context, we call \textbf{rug function} any real analytic proper function $\rho\colon \Sb \rightarrow \R_+$ such that $\rho^{-1}(0)=E$.
\end{defn}

\begin{theorem}\label{thm corresp boundary plumbing graph}
Let $E$ be a simple configuration of compact orientable real analytic surfaces in an oriented $4$-dimensional real analytic manifold $\Sb$, that admits a rug function $\rho$ as in Definition \ref{def rug function}. Then, for $\varepsilon >0$ small enough, the boundary of the oriented $4$-manifold $\{\rho\leqslant\varepsilon\}$ is orientation-preserving homeomorphic to the graph manifold associated to the graph $\Gdp{E}{\Sb}$.
\end{theorem}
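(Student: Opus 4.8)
The plan is to reduce Theorem~\ref{thm corresp boundary plumbing graph} to the classical plumbing description of tubular neighbourhoods by a Morse-theoretic argument on the rug function $\rho$. First I would fix, for each component $E_i$ of $E$, a tubular neighbourhood $N_i$ of $E_i$ in $\Sb$; since $E_i$ is an oriented closed surface embedded in an oriented $4$-manifold, $N_i$ is diffeomorphic to the total space of an oriented $2$-disk bundle over $E_i$, whose Euler number is exactly the self-intersection $e_i=[E_i]^2$ appearing in the decoration. At each transverse intersection point $p\in E_i\cap E_j$, the two tubular neighbourhoods can be chosen so that $N_i$ near $p$ looks like $D^2\times D^2$ with $E_i=D^2\times\{0\}$ and $E_j=\{0\}\times D^2$, which is precisely the local model of a plumbing; the sign $\oplus/\ominus$ recording whether the orientation of $E_i$ followed by that of $E_j$ agrees with the orientation of $\Sb$ is exactly the sign attached to the corresponding edge in the plumbing construction of $M_\Gamma$. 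Thus the union $N:=\bigcup_i N_i$, with these choices, is (orientation-preserving) diffeomorphic to the plumbed $4$-manifold $P(\Gdp{E}{\Sb})$ whose boundary is by definition $M_{\Gdp{E}{\Sb}}$.

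The remaining point is to identify $\partial N$ with $\partial\{\rho\leqslant\varepsilon\}$ for small $\varepsilon$. Here I would invoke the fact that $\rho$ is real analytic and proper with $\rho^{-1}(0)=E$: by a curve-selection / Łojasiewicz type argument (this is the content of Durfee's work \cite{Dur83} on neighbourhoods of algebraic and analytic sets), for $\varepsilon>0$ small enough the set $\{\rho\leqslant\varepsilon\}$ is a closed regular neighbourhood of $E$ in $\Sb$, and any two such regular neighbourhoods of the same compact set in a manifold are ambient isotopic. In particular $\{\rho\leqslant\varepsilon\}$ is diffeomorphic to the $N$ constructed above, by an orientation-preserving diffeomorphism, and hence their boundaries are orientation-preserving homeomorphic (indeed diffeomorphic). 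Combining the two steps gives that $\partial\{\rho\leqslant\varepsilon\}$ is orientation-preserving homeomorphic to $M_{\Gdp{E}{\Sb}}$, which is the assertion.

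The main obstacle I anticipate is not the plumbing bookkeeping — that is a direct unwinding of the definition of $M_\Gamma$ and the definition of $\Gdp{E}{\Sb}$, including the check that the edge signs match — but rather the careful justification that $\{\rho\leqslant\varepsilon\}$ is a \emph{regular} neighbourhood of $E$ and the control of what happens near the intersection points. Away from $\Sing(E):=\bigcup_{i\neq j}(E_i\cap E_j)$ one can use that $E$ is a smooth submanifold and $\rho$ a proper analytic function vanishing exactly on it, so for small $\varepsilon$ the level set $\{\rho=\varepsilon\}$ is smooth and transverse to the normal directions; near an intersection point one must check compatibility of the two local ``tube'' structures, which is where the simplicity of the configuration (only double points, transverse intersections) is used, together with the existence of local analytic coordinates in which $E$ is a union of two coordinate planes. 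One then glues the local models using a partition of unity to produce a single diffeomorphism $\{\rho\leqslant\varepsilon\}\cong N$; making this gluing respect orientations everywhere is the delicate part. I would cite \cite{Dur83} for the existence and uniqueness (up to isotopy) of such analytic regular neighbourhoods rather than reprove it, and devote the written proof to the identification of $N$ with the plumbed manifold and the matching of all decorations.
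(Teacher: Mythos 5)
Your proposal is correct and follows essentially the same route as the paper: construct a plumbed neighbourhood of $E$ by hand (oriented disk bundles of Euler number $e_i$ over the $E_i$, plumbed at the transverse double points with the signs recorded in $\Gdp{E}{\Sb}$) and then invoke Durfee \cite{Dur83} to identify its boundary with $\partial\{\rho\leqslant\varepsilon\}$. The only cosmetic difference is that the paper arranges for the hand-built neighbourhood to be the sublevel set of a semi-analytic rug function, so that Durfee's uniqueness of the boundary over all such rug functions applies verbatim, whereas you phrase the comparison step as uniqueness of regular neighbourhoods up to ambient isotopy --- the same content with the same citation.
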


\begin{defn}
In this setting, the manifold $\{\rho\leqslant\varepsilon\}$ is called a \textbf{tubular neighbourhood} of $E$. 
\end{defn}

\begin{proof}[About the proof of Theorem \ref{thm corresp boundary plumbing graph}.]

This theorem can be seen as an extension of what is done in \cite{Mum61}, in the case of a configuration of complex analytic curves in a smooth complex surface. 

In our case, observe first that we can extend the definition of rug functions to semi-analytic functions $\rho$, and still have a unique homeomorphism type for the boundary of the neighbourhood $\{\rho\leqslant\varepsilon\}$ of $E$ for $\varepsilon>0$ small enough, following the proof of \cite[Proposition 3.5]{Dur83}. Now, one can build by hand a semi-analytic neighbourhood whose boundary is homeomorphic to the manifold $\Gdp{E}{\Sb}$. 

This is done by building a rug function for each irreducible component $E_i$ of $E$, providing a tubular neighbourhood $T_i$ of each $E_i$ whose boundary is an $\Sp^1$-bundle of Euler class $e_i$ over $E_i$. One then plumbs those bundles using appropriate normalizations of the rug functions, building a semi-analytic neighbourhood of $E$ which is homeomorphic to the desired graph manifold.
\end{proof}

\begin{rk}
Note that the decorations on the edges of the graph $\Gdp{E}{\Sb}$ depend on the orientations of the surfaces $E_i$. However, if the surfaces $E_i$ are only orientable, the different possible plumbing dual graphs still encode the same graph manifold, see move {\bf [R0]} of the plumbing calculus in \cite{Neu81}.
\end{rk}

We end this section with the tool that we use to compute the self-intersections of the irreducible components of $E$:

\begin{defn}\label{def adapted function}\index{Adapted function}
Let $E=\bigcup\limits_{finite}E_i$ be a simple configuration of compact oriented real analytic surfaces in a $4$-dimensional real analytic manifold $\Sb$. A real analytic function $g\colon \Sb \rightarrow \C$ is called \textbf{adapted} to $E$ if 
\begin{enumerate}
\item $E_{tot}:=g^{-1}(0)$ is a simple configuration of orientable real analytic surfaces, not necessarily compact, such that $E_{tot}\supset E$.
\item \begin{enumerate}
\item \label{mutiplicity gen} For any component $E_i$ of $E_{tot}$, $\forall ~ p\in E_i \setminus \bigcup\limits_{j \neq i}E_j$, there is a neighbourhood $U_p$ of $p$ in $\Sb$ and complex coordinates $(x_p,y_p)$ on $U_p$ such that $U_p\cap E_i=\{x_p=0\}$ and $n_i\in \N^*$ such that $$g=x_p^{n_i} \cdot \varphi$$ where $\varphi\colon U_p \rightarrow \C$ is a unit at $p$.

\item \label{multiplicity double} For any components $E_i$ of $E$, $E_j$ of $E_{tot}$, $\forall ~ P\in E_i\cap E_j$, there is a neighbourhood $U_p$ of $p$ in $\Sb$ and complex coordinates $(x_p,y_p)$ on $U_p$ such that $U_p\cap E_i=\{x_p=0\}$, $U_p\cap E_k=\{y_p=0\}$,  and $n_i, n_k\in \N^*$ such that $$g=x_p^{n_i}y_p^{n_j} \cdot \varphi$$ where $\varphi\colon U_p \rightarrow \C$ is a unit at $p$.
\end{enumerate}
\end{enumerate}
\end{defn}

\begin{defn}\label{def multiplicity}
In this setting, the integer $n_i$ of point \ref{mutiplicity gen} of Definition \ref{def adapted function}, independent of the point $p\in E_i \setminus \bigcup\limits_{j \neq i}E_j$, is called \textbf{the multiplicity of $g$ on $E_i$}, denoted $m_{E_i}(g)$.
\end{defn}

\begin{lemma}{\bf (Computing self-intersections.)}\label{lemma compute self-intersections}

Let $\Sb,E,g,E_{tot}$ be as in Definition \ref{def adapted function}. Let $E^{(1)}$ be an irreducible component of $E$. Then the self-intersection $e^{(1)}$ of the surface $E^{(1)}$ in $\Sb$ verifies the following condition: let $p_1,\cdots,p_n$ be the intersection points of $E^{(1)}$ with other components of $E_{tot}$, $p_j\in E_j\cap E^{(1)}$, where the same component may appear several times. Then $$n^{(1)}\cdot e^{(1)}+\sum\limits_{i=1}^{n} \epsilon_i \cdot n_i=0$$ where $\epsilon_i\in \{-1,+1\}$ refers to the sign associated to the intersection $p_i$ in the following sense: if $p\in E_i\cap E_j$, associate $+1$ to $p$ if and only if the combination of the orientations of $E_i$ and $E_j$ at $p$ provides the ambient orientation of $\Sb$.
\end{lemma}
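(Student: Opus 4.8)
The plan is to prove Lemma \ref{lemma compute self-intersections} by a local-to-global computation of the homology class of the surface $E^{(1)}$, using the divisor $g^{-1}(0)$ as a ``numerically trivial'' object that pins down the self-intersection. The key observation is that, although $g$ is only real analytic, its zero locus $E_{tot}=g^{-1}(0)$ comes equipped with complex coordinates near every point of $E^{(1)}$ by hypothesis of Definition \ref{def adapted function}, so in a neighbourhood of $E^{(1)}$ the configuration $E_{tot}$ looks exactly like a complex analytic curve in a complex surface, and the function $g$ looks like a holomorphic function whose divisor is $\sum_i n_i E_i$ locally. One can therefore run the same argument as in \cite{Mum61}: the ``divisor'' of $g$ restricted to a tubular neighbourhood of $E^{(1)}$ has zero intersection number with $E^{(1)}$, because it is the boundary divisor of the section $g$ which extends over the neighbourhood.

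Concretely, first I would restrict attention to a tubular neighbourhood $T$ of $E^{(1)}$ in $\Sb$; by the hypotheses, $T\cap E_{tot}=E^{(1)}\cup \bigcup_j (E_j\cap T)$, and each $E_j\cap T$ with $j\neq (1)$ is a (possibly non-compact, possibly disconnected) surface meeting $E^{(1)}$ transversally in the points $p_i$. Next I would consider the real analytic $1$-cycle (or rather $2$-cycle in $4$-manifold language, i.e. a surface) defined by $\{g=c\}$ for small generic $c\in \C^*$. Near a point of $E^{(1)}$ not on any other $E_j$, the hypothesis $g=x_p^{n^{(1)}}\varphi$ shows that $\{g=c\}$ is $n^{(1)}$ parallel sheets running alongside $E^{(1)}$, i.e. $\{g=c\}\cap T$ is homologous, inside $T$, to $n^{(1)}\cdot[E^{(1)}]$ plus contributions coming from the other branches $E_j$. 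Near a double point $p_i\in E^{(1)}\cap E_j$, the local model $g=x_p^{n^{(1)}}y_p^{n_j}\varphi$ (complex coordinates, $\varphi$ a unit) gives exactly the picture of a holomorphic function with divisor $n^{(1)}\{x_p=0\}+n_j\{y_p=0\}$, so the sheet $\{g=c\}$ there is the standard smoothing, contributing, in homology, the appropriate multiple of the fibre of the $\Sp^1$-bundle; the sign with which it contributes is precisely the sign $\epsilon_i$ of the intersection point, because that sign records whether the complex orientations of $E^{(1)}$ and $E_j$ agree with the ambient orientation of $\Sb$.

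Then I would compute the intersection number $[\{g=c\}]\cdot [E^{(1)}]$ inside $T$ in two ways. On one hand, since $g$ is a genuine function on all of $\Sb$ (hence on $T$), the fibre $\{g=c\}$ of $g$ over a regular value, pushed to the boundary $\partial T$, bounds; more precisely $\{g=c\}\cap T$ is, rel boundary, homologous to a cycle supported in $\partial T$, so its intersection with the closed surface $E^{(1)}$ is $0$. On the other hand, decomposing $\{g=c\}\cap T$ into the piece parallel to $E^{(1)}$ (intersection number $n^{(1)}\cdot e^{(1)}$ with $E^{(1)}$, where $e^{(1)}$ is the self-intersection) and the pieces emanating from the double points $p_i$ (each contributing $\epsilon_i n_i$, by the local complex model and the definition of the sign), we get $n^{(1)}e^{(1)}+\sum_i \epsilon_i n_i=0$, which is the claim. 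This is the same bookkeeping as in the adjunction/self-intersection computation for resolutions of complex surface singularities, only carried out in the real analytic category where it is legitimate because all the relevant local models are complex.

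The main obstacle I anticipate is making rigorous the statement that $\{g=c\}\cap T$ has vanishing intersection with $E^{(1)}$ and that it splits into the asserted pieces with the asserted signs, given that $g$ is only real analytic globally and the complex structure exists only locally near $E_{tot}$. The point to handle carefully is that the ``parallel sheet'' and the ``double point smoothing'' are defined using the local complex coordinates, which need not glue to a global complex structure on $T$; one must check that the orientation conventions chosen in Definition \ref{def simple conf of surfaces, plumbing graphs.} (the sign $\oplus/\ominus$ on edges) are exactly compatible with the local complex orientations so that the contributions add up with the signs $\epsilon_i$ and no further global correction term appears. Once the orientations are matched, the computation is the standard one; I would phrase it either via the long exact sequence of the pair $(T,\partial T)$ and Lefschetz duality, or, more elementarily, by explicitly isotoping $\{g=c\}$ within $T$ to a normal push-off of $E^{(1)}$ with $\sum_i \epsilon_i n_i$ extra small meridian disks removed, exactly as in Mumford's original argument.
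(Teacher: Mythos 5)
Your proposal is correct and follows essentially the same route as the paper's own proof: both compute the intersection number of $E^{(1)}$ with the weighted cycle $\sum_i n_i E_i$ defined by $g=0$ in two ways, using that this cycle is homologous (in a tubular neighbourhood, rel boundary) to a nearby level $\{g=c\}$ disjoint from $E^{(1)}$, so that $n^{(1)}e^{(1)}+\sum_i\epsilon_i n_i=0$. Your discussion of the local complex models and the matching of orientation signs simply fills in the details that the paper leaves to the reader.
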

\begin{proof}
The proof follows the standard argument in the holomorphic category. The difference of the two members of the equation is the intersection number of $E^{(1)}$ with the cycle defined by $g=0$. This cycle is homologous with that defined by a nearby level of $g$, which does not meet $E^{(1)}$ anymore. The intersection number being invariant by homology, one gets the desired result. 

In order to make this argument rigorous, one has to work in convenient tubular neighborhoods of $E$ and to look at the cycles defined by the levels of $g$ in the homology of the tube relative to the boundary.
\end{proof}

\section{Hirzebruch-Jung strings}\label{appendix resol HJ}

We introduce here the Hirzebruch-Jung strings that occur in the final step of our resolution of $(\Skt,0)$. One can consult \cite[III.5]{BPV84}, \cite[4.3]{NemSzi12} or \cite{Pop07} for more details.

For $a,b,c\in \N$, $(a,b,c)$ denotes $gcd(a,b,c)$. Suppose that $d:=(a,b,c)=1$. In these conditions, the normal surface $(V,p):=\left(\{x^a=y^bz^c\},0\right)^{norm}$ is an isolated singularity of complex surface. We describe here a graph of resolution of $(V,p)$, decorated with the self-intersections of the irreducible components of the exceptional divisor and the multiplicities of the pullback of the function $g(x,y,z)=x^{n_1}y^{n_2}z^{n_3}$.

Denote $\delta:=\dfrac{a}{(a,b)(a,c)}$, and denote by $\alpha$ the unique integer in $[0,\delta-1]$ such that $$a~|~\alpha c (a,b)+b(a,c).$$

Let $$\frac{\delta}{\alpha}=k_1-\cfrac{1}{k_2-\cfrac{1}{\cdots-\cfrac{1}{k_l}}}$$ be the negative fraction expansion of $\delta / \alpha$, $k_i\geqslant 2$, $k_i\in \N$.

Define $\mu_{l+1}:=\dfrac{b\cdot n_1+a\cdot n_2}{(a,b)},\mu_{0}:=\dfrac{c\cdot n_1+a\cdot n_3}{(a,c)},\mu_1:=\dfrac{\alpha\cdot \mu_0+\mu_{l+1}}{\delta}$ and define $\mu_2,\cdots,\mu_l$ by the relation $$\mu_{i+1}=k_i\cdot\mu_i-\mu_{i-1}$$
for any $1\leqslant i \leqslant l$. Then the graph of Figure \ref{fig:HJbambouxyz} is a graph of resolution of $(V,p)$. Left and right-hand arrows represent, respectively, the pullbacks of the $z$ and the $y$-axes. The numbers between parentheses are the multplicities of the pullback of the function $g(x,y,z)=x^{n_1}y^{n_2}z^{n_3}$ on the irreducible components of its total transform.

\begin{figure}[h]
\begin{center}

        \includegraphics[totalheight=1.4cm]{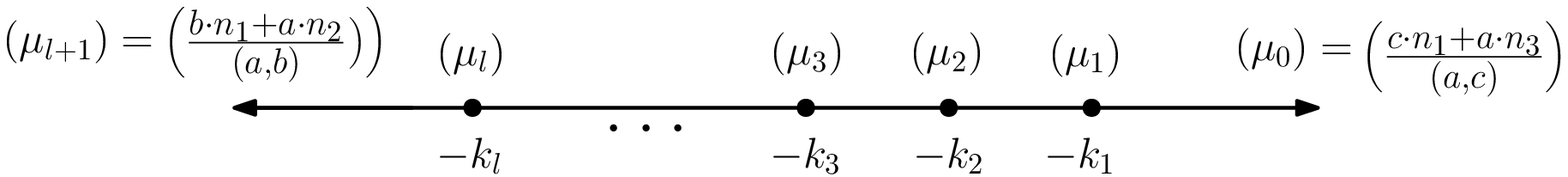}
    \caption{The string $Str(a;b,c|n_1;n_2,n_3)$.}
    \label{fig:HJbambouxyz}
\end{center}
\end{figure}

We denote by $Str^\oplus(a;b,c|n_1;n_2,n_3)$, resp. $Str^\ominus(a;b,c|n_1;n_2,n_3)$ the chain of Figure \ref{fig:HJbambouxyz} with each edge decorated by $\plus$, resp. $\minus$, and the self-intersection decorations removed.

\end{appendices}

\bibliographystyle{alpha}
\bibliography{biblio} 

\Adresses
\end{document}